\definecolor{light-gray}{RGB}{240,240,240}
\newtheorem*{unnumberedtheorem}{Theorem}
\newtheorem{theorem}{Theorem}
\numberwithin{theorem}{section}
\newtheorem{proposition}[theorem]{Proposition}
\newtheorem{lemma}[theorem]{Lemma}
\newtheorem{corollary}[theorem]{Corollary}
\theoremstyle{definition}
\newtheorem{definition}[theorem]{Definition}
	\newcommand{\sizedescriptor}[2]
	{
		\ifthenelse{\equal{#1}{0}}{}{
		\ifthenelse{\equal{#1}{1}}{\big}{
		\ifthenelse{\equal{#1}{2}}{\Big}{
		\ifthenelse{\equal{#1}{3}}{\bigg}{
		\ifthenelse{\equal{#1}{4}}{\Bigg}{
		#2}}}}}
	}
	\newcommand{\proven}[1]{\underline{#1}\vspace{0.2em}\\}
	\newcommand{\df}[1]{{\bf{#1}}}
	\newcommand{\ke}{\mathrel{\simeq}}
	\newcommand{\dfeq}{\mathrel{\mathop:}=}
	\newcommand{\impl}{\Rightarrow}
	\newcommand{\rstr}[1]{\left.{#1}\right|}
	\newcommand{\parto}{\mathrel{\rightharpoonup}}
	\newcommand{\supp}[1]{\mathrm{supp}({#1})}
\newcommand{\all}[1]{\forall\hspace{0.2ex}{#1}\hspace{0.2ex}{.}\hspace{0.9ex}}
\newcommand{\some}[1]{\exists\hspace{0.2ex}{#1}\hspace{0.2ex}{.}\hspace{0.9ex}}
	\NewDocumentCommand{\set}
		{O{auto} m G{\empty}}
		{\sizedescriptor{#1}{\left}\{ {#2} \ifthenelse{\equal{#3}{}}{}{ \; \sizedescriptor{#1}{\middle}| \; {#3}} \sizedescriptor{#1}{\right}\}}
	\newcommand{\NN}{\mathbb{N}}
	\newcommand{\ZZ}{\mathbb{Z}}
	\newcommand{\RR}{\mathbb{R}}
	\newcommand{\intoo}[3][\RR]{{#1}_{(#2, #3)}}
	\newcommand{\intcc}[3][\RR]{{#1}_{[#2, #3]}}
	\newcommand{\intoc}[3][\RR]{{#1}_{(#2, #3]}}
	\newcommand{\intco}[3][\RR]{{#1}_{[#2, #3)}}
\newcommand{\ad}{n}
\newcommand{\md}{m}
\newcommand{\dst}{d}
\NewDocumentCommand{\rch}{G{}}{\tau_{#1}}
\newcommand{\pp}{p}
\newcommand{\hd}{\varkappa}
\newcommand{\lppb}{\lambda}
\newcommand{\pr}{pr}
\newcommand{\prv}{prv}
\NewDocumentCommand{\fl}{G{\smpp}}{f_{#1}}
\newcommand{\pf}{v}
\newcommand{\pt}[1]{\mathtt{#1}}
\newcommand{\smpp}{\pt{S}}
\newcommand{\gp}{\pt{X}}
\newcommand{\sgp}{\pt{Y}}
\newcommand{\cab}{\pt{C}}
\newcommand{\st}[1]{\mathcal{#1}}
\newcommand{\mnf}{\st{M}}
\newcommand{\gmnf}{\st{N}}
\newcommand{\otnm}[1]{\mnf_{#1}}
\newcommand{\ctnm}[1]{\overline{\mnf}_{#1}}
\newcommand{\smp}{\st{S}}
\NewDocumentCommand{\ma}{G{}}{\st{A}_{#1}}
\NewDocumentCommand{\ob}{G{\empty} G{\empty}}{\snbh{\mathbf{B}}{#2}{#1}}
\NewDocumentCommand{\cb}{G{\empty} G{\empty}}{\snbh{\overline{\mathbf{B}}}{#2}{#1}}
\newcommand{\fgr}{\mathbf{E}}
\NewDocumentCommand{\C}{G{}}{\mathcal{C}^{#1}}
\NewDocumentCommand{\snbh}{m m m}{{#1}_{#2}\ifthenelse{\equal{#3}{}}{}{\!\left(#3\right)}}
\NewDocumentCommand{\of}{G{\smpp} G{\pp}}{\snbh{\fgr}{#2}{#1}}
\NewDocumentCommand{\cf}{G{\smpp} G{\pp}}{\snbh{\overline{\fgr}}{#2}{#1}}
\NewDocumentCommand{\on}{G{\pp}}{\snbh{\fgr}{#1}{}}
\NewDocumentCommand{\cn}{G{\pp}}{\snbh{\overline{\fgr}}{#1}{}}
\NewDocumentCommand{\ol}{G{\smpp} G{\pp}}{\snbh{\mathbf{L}}{#2}{#1}}
\NewDocumentCommand{\cl}{G{\smpp} G{\pp}}{\snbh{\overline{\mathbf{L}}}{#2}{#1}}
\NewDocumentCommand{\bd}{G{\empty}}{\partial{#1}}
\NewDocumentCommand{\ts}{O{\mnf} G{\smpp}}{T_{#2}\hspace{0.1ex}{#1}}
\NewDocumentCommand{\ns}{O{\mnf} G{\smpp}}{N_{#2}\hspace{0.1ex}{#1}}
\NewDocumentCommand{\ls}{G{\gp}}{\Gamma\hspace{-0.2ex}_{#1}}
\newcommand{\vf}{V}
\newcommand{\avf}{\widetilde{\vf}}
\NewDocumentCommand{\lavf}{G{\smpp}}{\avf_{#1}}
\newcommand{\pu}[1][\smpp]{{f_{#1}}}
\newcommand{\pun}{f_P}
\newcommand{\flow}{\Phi}
\newcommand{\fd}{\st{D}}
\newcommand{\ph}{\text{---}}  
\NewDocumentCommand{\sld}{G{\smpp}}{\st{I}_{#1}}
\newcommand{\spr}[2]{\langle{#1}, {#2}\rangle}
\newcommand{\pd}[2]{\frac{\partial{#1}}{\partial{#2}}}
\newcommand{\pep}[1][\smpp]{q_{#1}}
\newcommand{\nhl}{\mathrm{hl}_{\smpp}{\gp}}
\newcommand{\pa}{\alpha_{\smpp}{\gp}}
\newcommand{\dr}{R}
\newcommand{\cs}{\mathscr{C}}
\newcommand{\ecs}{\widetilde{\cs}}
\newcommand{\lc}{L}
\newcommand{\hs}[1][\gp]{{\st{H}_{\smpp}{#1}}}
\newcommand{\arccot}{\mathrm{arccot}}
\newcommand{\se}[1][\smpp]{\st{A}_{#1}}
\newcommand{\de}[1][\smpp]{\st{B}_{#1}}
\newcommand{\tse}[1][\smpp]{\st{\widehat{A}}_{#1}}
\newcommand{\tde}[1][\smpp]{\st{\widehat{B}}_{#1}}
\newcommand{\sea}{\st{V}}
\newcommand{\npa}{\st{W}}
\title{\textbf{Finding the Homology of Manifolds using Ellipsoids}}
\author{Sara~Kali\v{s}nik and Davorin~Le\v{s}nik}
\date{}
\begin{document}
	
	\maketitle
	
	\begin{abstract}
		A standard problem in applied topology is how to discover topological invariants of data from a noisy point cloud that approximates it. We consider the case where a sample is drawn from a properly embedded $\C{1}$-submanifold without boundary in a Euclidean space. We show that we can deformation retract the union of ellipsoids, centered at sample points and stretching in the tangent directions, to the manifold. Hence the homotopy type, and therefore also the homology type, of the manifold is the same as that of the nerve complex of the cover by ellipsoids. By thickening sample points to ellipsoids rather than balls, our results require a smaller sample density than comparable results in the literature. They also advocate using elongated shapes in the construction of barcodes in persistent homology.
	\end{abstract}

	\section{Introduction}
		
		Data is often unstructured and comes in the form of a non-empty finite metric space, called a point cloud.  It is often very high dimensional even though data points are actually samples from a low-dimensional object (such as a manifold) that is embedded in a high-dimensional space.  One reason may be that many features are all measurements of the same underlying cause and therefore closely related to each other. For example, if you take photos of a single object from multiple angles simultaneously there is a lot overlap in the information captured by all those cameras. One of the main tasks of `manifold learning' is to design algorithms to estimate geometric and topological properties of the manifold from the sample points lying on this unknown manifold. 
		
		One successful framework for dealing with the problem of reconstructing shapes from point clouds is based on the notion of $\epsilon$-sample introduced by Amenta et al~\cite{amenta1999surface}. A sampling of a shape $\mnf$  is an $\epsilon$-sampling if every point~$\pt{P}$ in $\mnf$  has a sample point at distance at most $\epsilon \cdot \textrm{lfs}_\mnf(\pt{P})$, where $\textrm{lfs}_\mnf(\pt{P})$ is the local feature size of~$\pt{P}$, i.e.~the distance from~$\pt{P}$ to the medial axis of~$\mnf$. Surfaces smoothly embedded in~$\RR^3$ can be reconstructed homeomorphically from any $0.06$-sampling using the Cocone algorithm~\cite{doi:10.1142/S0218195902000773}. 
		
		One simple method for shape reconstructing is to output an offset of the sampling for a suitable value~$\alpha$ of the offset parameter. Topologically, this is equivalent to taking the \v{C}ech complex or the $\alpha$-complex~\cite{10.1145/174462.156635}. This leads to the problem of finding theoretical guarantees as to when an offset of a sampling has the same homotopy type as the underlying set. In other words, we need to find conditions on a point cloud~$\smp$ of a shape~$\mnf$ so that the thickening of~$\smp$ is homotopy equivalent to~$\mnf$. This only works if the point cloud is sufficiently close to~$\mnf$, i.e.~when there is a bound on the Hausdorff distance between $\smp$ and~$\mnf$.
		
		Niyogi, Smale and Weinberger~\cite{NSW} proved that this method indeed provides reconstructions having the correct homotopy type for densely enough sampled smooth submanifolds of $\RR^n$. More precisely, one can capture the homotopy type of a Riemannian submanifold~$\mnf$ without boundary of reach~$\rch$ in a Euclidean space from a finite $\frac{\epsilon}{2}$-dense sample $\smp \subseteq \mnf$ whenever $\epsilon < \sqrt{\frac{3}{5}} \rch$ by showing that the union of $\epsilon$-balls with centers in sample points deformation retracts to~$\mnf$.
		
		Let us denote the Hausdorff distance between $\smp$ and~$\mnf$ by~$\hd$ --- that is, every point in~$\mnf$ has an at most $\hd$-distant point in~$\smp$. We can rephrase the above result as follows: whenever $2\hd < \sqrt{\frac{3}{5}} \rch$, the homotopy type of~$\mnf$ is captured by a union of $\epsilon$-balls with centers in~$\smp$ for every $\epsilon \in \intoo{2\hd}{\sqrt{\frac{3}{5}} \rch}$. Thus the bound of the ratio
		\[\frac{\hd}{\rch} < \tfrac{1}{2} \sqrt{\tfrac{3}{5}} \approx 0.387\]
		represents how dense we need the sample to be in order to be able to recover the homotopy type of~$\mnf$.
		
		Other authors gave variants of Niyogi, Smale and Weinberger's result. In~\cite[Theorem~2.8]{10.1145/3275242}, the authors relax the conditions on the set we wish to approximate (it need not be a manifold, just any non-empty compact subset of a Euclidean space) and the sample (it need not be finite, just non-empty compact), but the price they pay for this is a lot lower upper bound on~$\frac{\hd}{\rch}$, which in their case is~$\frac{1}{6} \approx 0.167$. One can potentially improve the result by using local quantities ($\mu$-reach etc.) \cite{Chazal2009}, \cite{chazal2005lambda}, \cite{chazal2007stability}, \cite{Turner}, \cite{attali2010reconstructing}, \cite{attali2013vietoris} instead of the global reach~$\rch$, at least in situations when these are large compared to~$\rch$. Due to the difficulty and length of our current work, we take only global features into account, and leave the generalization to local features for future work.
		
		In practice producing a sufficiently dense sample can be difficult or requires a long time~\cite{dufresne2019sampling}, so relaxing the upper bound of~$\frac{\hd}{\rch}$ is desirable. The purpose of this paper is to prove that we can indeed relax this bound when sampling manifolds (though we allow a more general class than~\cite{NSW}) if we thicken sample points to \emph{ellipsoids} rather than balls. The idea is that since a differentiable manifold is locally well approximated by its tangent space, an ellipsoid with its major semi-axes in the tangent directions well approximates the manifold. This idea first appeared in~\cite{breiding2018learning}, where the authors construct a filtration of ``ellipsoid-driven complexes'', where the user can choose the ratio between the major (tangent) and the minor (normal) semi-axes. Their experiments showed that computing barcodes from ellipsoid-driven complexes strengthened the topological signal, in the sense that the bars corresponding to features of the data were longer. In our paper we make the ratio dependent on the persistence parameter and give a proof that the union of ellipsoids around sample points (under suitable assumptions) deformation retracts onto the manifold. Hence our paper gives theoretical guarantees that the union of ellipsoids captures the manifold's homotopy type, and thus further justifies the use of ellipsoid-inspired shapes to construct barcodes.
		
		In this paper we assume that the information about the reach of the manifold and its tangent and normal spaces in the sample points are given. In practice, these quantities can be estimated from the sample, see e.g.~\cite{aamari2019estimating, berenfeld2020estimating, zhang2004principal, kaslovsky2011optimal, zhang2011improved}.
		
		The central theorem of this paper (Theorem~\ref{theorem:main}) is the following:
		\begin{unnumberedtheorem}
			Let $\ad \in \NN$ and let $\mnf$ be a non-empty properly embedded $\mathcal{C}^1$-submanifold of~$\RR^\ad$ without boundary. Let $\smp \subseteq \mnf$ be a subset of~$\mnf$, locally finite in~$\RR^\ad$ (the sample from the manifold~$\mnf$). Let $\rch$ be the reach of~$\mnf$ in~$\RR^\ad$ and $\hd$ the Hausdorff distance between $\smp$ and~$\mnf$. Then for all $\pp \in \intcc{0.5\rch}{0.96\rch}$ which satisfy
			\[\hd < \sqrt{2\pp \big(\sqrt{\rch (\pp + 2\rch)} - \rch\big) - 0.55\rch^2}\]
			there exists a strong deformation retraction from~$\on$ (the union of open ellipsoids around sample points with normal semi-axes of length~$\pp$ and tangent semi-axes of length~$\sqrt{\rch \pp + \pp^2}$) to~$\mnf$. In particular, $\mnf$, $\on$ and the nerve complex of the ellipsoid cover $(\of)_{\smpp \in \smp}$ are homotopy equivalent, and so have the same homology.
		\end{unnumberedtheorem}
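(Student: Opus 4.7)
The plan is to adapt the classical Niyogi--Smale--Weinberger strategy, replacing isotropic balls with anisotropic ellipsoids stretched along tangent directions. The proof splits into three ingredients: (i) verify $\mnf \subseteq \on$, (ii) show that the nearest-point projection $\pi \colon \on \to \mnf$ is well-defined, and (iii) build a strong deformation retraction of $\on$ onto $\mnf$ via a normal-bundle homotopy, from which the nerve-equivalence claim follows by the usual Good Cover / Nerve Lemma argument.

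For step (i), given $q \in \mnf$, use the Hausdorff hypothesis to pick $\smpp \in \smp$ with $|q - \smpp| \leq \hd$ and decompose $q - \smpp = v_T + v_N$ with $v_T \in \ts$ and $v_N \in \ns$. Federer's reach inequality gives $|v_N| \leq |q - \smpp|^2/(2\rch)$, which combined with $|v_T|^2 + |v_N|^2 \leq \hd^2$ should yield the ellipsoid-membership inequality $|v_T|^2/(\rch\pp + \pp^2) + |v_N|^2/\pp^2 < 1$ under the stated bound on $\hd$, placing $q$ in $\of$. For step (ii), the subtlety is that the tangent semi-axis $\sqrt{\rch\pp + \pp^2}$ can exceed $\rch$ (e.g.\ already for $\pp$ close to $0.96\rch$), so ``$\on$ sits inside the tubular neighborhood of radius $\rch$ around $\mnf$'' is not automatic from the ellipsoid diameter. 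One has to exploit that for $x = \smpp + v_T + v_N \in \of$ with large $|v_T|$, the point $\smpp + v_T$ is close to $\mnf$ by the quadratic reach estimate, so that the distance from $x$ to $\mnf$ is governed chiefly by $|v_N|$ plus a curvature correction, and stays strictly below $\rch$.

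For step (iii), the natural candidate is the straight-line homotopy $H(x, t) = (1-t)x + t\pi(x)$, which moves each point along a normal segment to its footpoint on $\mnf$; since each open ellipsoid $\of$ is convex and contains $\smpp \in \mnf$ in its interior, one would like to reduce the check $H(x,t) \in \on$ to showing that each fiber $\pi^{-1}(q) \cap \on$ is star-shaped with center $q$. This is where the main geometric obstacle sits: as $t$ grows, $H(x,t)$ typically leaves the ellipsoid $\of$ it started in and must land in a neighboring $\of{\smpp'}$. Certifying that the cover accommodates this transfer at every intermediate time requires translating the ellipsoid-membership condition from $\smpp$ to $\smpp'$, which depends on the angle between $\ts$ and $\ts{\smpp'}$ (controlled by the reach) and on the Hausdorff density $\hd$. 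The bulk of the work is this quantitative bookkeeping of how the ellipsoid anisotropy interacts with the curvature-induced twist of tangent spaces across the sample, and the somewhat baroque form of the hypothesis on $\hd$ --- in particular the $0.55\rch^2$ subtrahend --- is almost certainly the sharpest constant that makes the worst-case membership inequality balance.
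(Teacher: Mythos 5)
Your steps (i) and (ii) are essentially sound and match the paper's preliminary lemmas (covering of $\mnf$ by the ellipsoids under the stated bound on $\hd$, and the containment $\cn \subseteq \otnm{\rch}$ so that $\pr$ is defined on~$\on$). The genuine gap is step (iii): the straight-line homotopy $H(x,t)=(1-t)x+t\,\pr(x)$ does \emph{not} stay inside~$\on$ in general, and no amount of ``quantitative bookkeeping'' or tuning of constants repairs this. Because the ellipsoids are strongly elongated in tangent directions, a point~$x$ near the tip of an ellipsoid~$\of$ that is covered by no other ellipsoid can have its normal segment to $\pr(x)$ exit~$\on$ entirely; the fibers $\pr^{-1}(q)\cap\on$ are not star-shaped about~$q$ in these configurations (this is exactly the failure depicted in Figure~\ref{figure:normal-projection-not-working}). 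So the reduction of the whole problem to star-shapedness of fibers, which is the load-bearing step of your plan, is false as stated.

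What the paper actually does is different in two essential ways. First, it proves (Lemma~\ref{lemma:program-conclusion}) that the straight-line retraction is safe only for points lying in at least \emph{two} ellipsoids: for such a point there is a single ellipsoid containing both the point and its projection, so convexity applies inside that one ellipsoid. This lemma is not established by hand estimates on tangent-space twist but by a computer-assisted proof: the worst-case configurations are reduced to a compact four-parameter family, a membership function~$\pf$ is shown to be Lipschitz with an explicit constant, and positivity is certified on a finite grid. The subtrahend $0.55\rch^2$ in the hypothesis is not a sharp constant balancing a membership inequality; it is an offset ($\hd_\mathrm{off}$) introduced precisely so that $\pf$ stays bounded away from zero and the grid certification terminates in reasonable time (the purely theoretical bound would be better, about $\hd/\rch < 1.21$). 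Second, for points near lone ellipsoid tips where the normal direction would leave~$\on$, the retraction direction is replaced by the projection of $\prv(\gp)$ onto the supporting half-space of the level-set ellipsoid boundary through~$\gp$, so the motion slides along that boundary; these local fields are glued by a carefully controlled partition of unity, the resulting field is shown to be locally Lipschitz with a uniform angle bound against $\prv$, and one integrates its flow to reach a thin tubular neighbourhood $\ctnm{w}$ of~$\mnf$, applying the straight-line (normal) retraction only from there. Without some substitute for this vector-field/flow construction and for the two-ellipsoid lemma, your outline cannot be completed.
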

		
		By replacing the balls with ellipsoids, we manage to push the upper bound on~$\frac{\hd}{\rch}$ to approximately~$0.913$, an improvement by a factor of about~$2.36$ compared to~\cite{NSW}. In other words, our method allows samples with less than half the density.
		
		The paper is organized as follows. Section~\ref{section:general-definitions} lays the groundwork for the paper, providing requisite definitions and deriving some results for general differentiable submanifolds of Euclidean spaces. In Section~\ref{section:bounds-on-persistence-parameter} we calculate theoretical bounds on the persistence parameter~$\pp$: the lower bound ensures that the union of ellipsoids covers the manifold and the upper bound ensures that the union does not intersect the medial axis. Part of our proof relies on the normal deformation retraction working on intersections of ellipsoids which appears too difficult to prove theoretically by hand, so we resort to a computer program, explained in Section~\ref{section:program}. In Section~\ref{section:deformation-retraction-construction} we construct a deformation retraction from the union of ellipsoids to the manifold. The section is divided into several subsections for easier reading. Section~\ref{section:main-result} collects the results from the paper to prove the main theorem. In Section~\ref{section:discussion} we discuss our results and future work.
		
		\subsection*{Notation}
		
			Natural numbers~$\NN = \set{0, 1, 2,\ldots}$ include zero. Unbounded real intervals are denoted by~$\RR_{> a}$, $\RR_{\leq a}$ etc. Bounded real intervals are denoted by~$\intoo{a}{b}$ (open), $\intcc{a}{b}$ (closed) etc.
			
			\begin{tabular}{rl}
				Glossary: &\\
				&\\
				$\dst$ & Euclidean distance in~$\RR^\ad$ \\
				$\gmnf$ & a submanifold of~$\RR^\ad$ \\
				$\mnf$ & $\md$-dimensional $\C{1}$-submanifold of~$\RR^\ad$, embedded as a closed subset \\
				$\otnm{r}$ & open $r$-offset of~$\mnf$, i.e.~$\set{\gp \in \RR^\ad}{\dst(\mnf, \gp) < r}$ \\
				$\ctnm{r}$ & closed $r$-offset of~$\mnf$, i.e.~$\set{\gp \in \RR^\ad}{\dst(\mnf, \gp) \leq r}$ \\
				$\ts{\gp}$ & tangent space on~$\mnf$ at~$\gp$ \\
				$\ns{\gp}$ & normal space on~$\mnf$ at~$\gp$ \\
				$\smp$ & manifold sample (a subset of~$\mnf$), non-empty and locally finite \\
				$\hd$ & the Hausdorff distance between $\mnf$ and~$\smp$ \\
				$\ma$ & the medial axis of~$\mnf$ \\
				$\rch$ & the reach of~$\mnf$ \\
				$\pp$ & persistence parameter \\
				$\of$ & open ellipsoid with the center in a sample point~$\smpp \in \smp$ \\
				& with the major semi-axes tangent to~$\mnf$ \\
				$\cf$ & closed ellipsoid with the center in a sample point~$\smpp \in \smp$ \\
				& with the major semi-axes tangent to~$\mnf$ \\
				$\bd{\cf}$ & the boundary of~$\cf$, i.e.~$\cf \setminus \of$ \\
				$\on$ & the union of open ellipsoids over the sample, $\bigcup_{\smpp \in \smp} \of$ \\
				$\cn$ & the union of closed ellipsoids over the sample, $\bigcup_{\smpp \in \smp} \cf$ \\
				$\pr$ & the map $\ma^\complement \to \mnf$ taking a point to the unique closest point on~$\mnf$ \\
				$\prv$ & the map taking a point~$\gp$ to the vector $\pr(\gp) - \gp$ \\
				$\lavf$ & auxiliary vector field, defined on~$\of$ \\
				$\avf$ & auxiliary vector field, defined on~$\on$ \\
				$\vf$ & the vector field of directions for the deformation retraction \\
				$\flow$ & the flow of the vector field~$\vf$ \\
				$\dr$ & a deformation retraction from $\on$ to a tubular neighbourhood of~$\mnf$
			\end{tabular}
	
	\section{General Definitions}\label{section:general-definitions}
	
		All constructions in this paper are done in an ambient Euclidean space~$\RR^\ad$, $\ad \in \NN$, equipped with the usual Euclidean metric~$\dst$. We will use the symbol~$\gmnf$ for a general submanifold of~$\RR^\ad$.
		
		By definition each point~$\gp$ of a manifold $\gmnf$ has a neighbourhood, homeomorphic to a Euclidean space or a closed Euclidean half-space. The dimension of this (half-)space is the \df{dimension of~$\gmnf$ at~$\gp$}. Different points of a manifold can have different dimensions\footnote{A simple example is the submanifold of the plane, given by the equation $(x^2 + y^2)^2 = x^2 + y^2$ (a union of a point and a circle).}, though the dimension is constant on each connected component. In this paper, when we say that $\gmnf$ is an \df{$\md$-dimensional manifold}, we mean that it has dimension~$\md$ at \emph{every} point.
		
		We quickly recall from general topology that it is equivalent for a subset of a Euclidean space to be closed and to be properly embedded.
		
		\begin{proposition}\label{proposition:properly-embedded-submanifolds}
			Let $(\st{X}, d)$ be a metric space in which every closed ball is compact (every Euclidean space~$\RR^\ad$ satisfies this property). The following statements are equivalent for any subset $\st{S} \subseteq \st{X}$. 
			\begin{enumerate}
				\item
					$\st{S}$ is a closed subset of~$\st{X}$.
				\item
					$\st{S}$ is properly embedded into~$\st{X}$, i.e.~the inclusion $\st{S} \hookrightarrow \st{X}$ is a proper map\footnote{Recall that a map is \df{proper} when the preimage of every compact subspace of the codomain is compact.}.
				\item
					$\st{S}$ is empty or distances from points in the ambient space to $\st{S}$ are attained. That is, for every $\gp \in \st{X}$ there exists $\sgp \in \st{S}$ such that $d(\gp, \st{S}) = d(\gp, \sgp)$.
			\end{enumerate}
		\end{proposition}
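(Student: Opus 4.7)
The plan is to establish the cyclic chain of implications $(1) \Rightarrow (2) \Rightarrow (3) \Rightarrow (1)$. Each step reduces to a short general-topology argument, and the hypothesis that closed balls in $\st{X}$ are compact will only be genuinely needed in one of them.

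For $(1) \Rightarrow (2)$, I would take any compact $K \subseteq \st{X}$; since metric spaces are Hausdorff, $K$ is closed in $\st{X}$, so $K \cap \st{S}$ is closed in the compact set $K$ and hence compact. The preimage of $K$ under the inclusion $\st{S} \hookrightarrow \st{X}$ is precisely $K \cap \st{S}$ viewed inside $\st{S}$, proving properness.

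For $(2) \Rightarrow (3)$, assuming $\st{S}$ is non-empty, I would fix $\gp \in \st{X}$, put $r \dfeq d(\gp, \st{S})$, and choose a minimizing sequence $(\sgp_n)_{n \geq 1}$ in $\st{S}$ with $d(\gp, \sgp_n) < r + \tfrac{1}{n}$. All terms lie in the closed ball $\overline{B}(\gp, r+1)$, which is compact by hypothesis on $\st{X}$; properness of the inclusion then forces $\st{S} \cap \overline{B}(\gp, r+1)$ to be compact inside $\st{S}$, so a subsequence of $(\sgp_n)$ converges to some $\sgp \in \st{S}$, and continuity of $d(\gp, \ph)$ yields $d(\gp, \sgp) = r$.

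For $(3) \Rightarrow (1)$, empty $\st{S}$ is vacuously closed; otherwise, if $\gp \in \overline{\st{S}}$ then $d(\gp, \st{S}) = 0$, and applying $(3)$ produces $\sgp \in \st{S}$ with $d(\gp, \sgp) = 0$, i.e.\ $\gp = \sgp \in \st{S}$. The only non-formal point is $(2) \Rightarrow (3)$: without the compactness-of-closed-balls hypothesis a bounded minimizing sequence need not subconverge, so that is where the assumption on $\st{X}$ is essential; the remaining implications are purely topological.
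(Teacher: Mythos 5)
Your proposal is correct and follows essentially the same route as the paper: the same cyclic chain $(1)\Rightarrow(2)\Rightarrow(3)\Rightarrow(1)$, with the compactness of closed balls invoked only in $(2)\Rightarrow(3)$ and the closure argument for $(3)\Rightarrow(1)$. The only (cosmetic) difference is in $(2)\Rightarrow(3)$, where you extract a convergent subsequence from a minimizing sequence via sequential compactness, while the paper intersects $\st{S}$ with the closed ball of radius $d(\gp,\pt{S})$ around $\gp$ and applies the fact that a continuous function on a non-empty compact set attains its minimum.
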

		
		\begin{proof}
			\
			\begin{itemize}
				\item\proven{$(1 \impl 2)$}
					If $\st{S}$ is closed in~$\st{X}$, then its intersection with a compact subset of~$\st{X}$ is compact, so $\st{S}$ is properly embedded into~$\st{X}$.
				\item\proven{$(2 \impl 3)$}
					If $\st{S}$ is non-empty, pick $\pt{S} \in \st{S}$. For any $\gp \in \st{X}$ we have $d(\gp, \st{S}) \leq d(\gp, \pt{S})$, so $d(\gp, \st{S}) = d\big(\gp, \st{S} \cap \cb{\gp}{d(\gp, \pt{S})}\big)$. Since $\st{S}$ is properly embedded in~$\st{X}$, its intersection with the compact closed ball $\cb{\gp}{d(\gp, \pt{S})}$ is compact also. A continuous map from a non-empty compact space into reals attains its minimum, so there exists $\sgp \in \st{S}$ such that $d(\gp, \sgp) = d\big(\gp, \st{S} \cap \cb{\gp}{d(\gp, \pt{S})}\big) = d(\gp, \st{S})$.
				\item\proven{$(3 \impl 1)$}
					The empty set is closed. Assume that $\st{S}$ is non-empty. Then for every point in the closure $\gp \in \overline{\st{S}}$ we have $d(\gp, \st{S}) = 0$. By assumption this distance is attained, i.e.~we have $\sgp \in \st{S}$ such that $d(\gp, \sgp) = 0$, so $\gp = \sgp \in \st{S}$. Thus $\overline{\st{S}} \subseteq \st{S}$, so $\st{S}$ is closed.
			\end{itemize}
		\end{proof}
		
		In this paper we consider exclusively submanifolds of a Euclidean space which are properly embedded, so closed subsets. We mostly use the term `properly embedded' instead of `closed' to avoid confusion: the term `closed manifold' is usually used in the sense `compact manifold with no boundary' which is a stronger condition (a properly embedded submanifold need not be compact or without boundary, though every compact submanifold is properly embedded).
		
		A manifold can have smooth structure up to any order $k \in \NN_{\geq 1} \cup \set{\infty}$; in that case it is called a $\C{k}$-manifold. A $\C{k}$-submanifold of~$\RR^\ad$ is a $\C{k}$-manifold which is a subset of~$\RR^\ad$ and the inclusion map is $\C{k}$.
		
		If $\gmnf$ is at least a $\C{1}$-manifold, one may abstractly define the tangent space~$\ts[\gmnf]{\gp}$ and the normal space~$\ns[\gmnf]{\gp}$ at any point $\gp \in \gmnf$ ($\gp$ is allowed to be a boundary point). As we restrict ourselves to submanifolds of~$\RR^\ad$, we also treat the tangent and the normal space as affine subspaces of~$\RR^n$, with the origins of $\ts[\gmnf]{\gp}$ and $\ns[\gmnf]{\gp}$ placed at~$\gp$. The dimension of $\ts[\gmnf]{\gp}$ (resp.~$\ns[\gmnf]{\gp}$) is the same as the dimension (resp.~codimension) of $\gmnf$ at~$\gp$. Because of this and because $\ts[\gmnf]{\gp}$ and $\ns[\gmnf]{\gp}$ are orthogonal, they together generate~$\RR^\ad$.
		
		\begin{definition}
			Let $\gmnf$ be a $\C{1}$-submanifold of~$\RR^\ad$, $\gp \in \gmnf$ and $\md$ the dimension of~$\gmnf$ at~$\gp$.
			\begin{itemize}
				\item
					A \df{tangent-normal coordinate system} at $\gp \in \gmnf$ is an $\ad$-dimensional orthonormal coordinate system with the origin in~$\gp$, the first $\md$ coordinate axes tangent to~$\gmnf$ at~$\gp$ and the last $\ad-\md$ axes normal to~$\gmnf$ at~$\gp$.
				\item
					A \df{planar tangent-normal coordinate system} at $\gp \in \gmnf$ is a two-dimensional plane in~$\RR^\ad$ containing~$\gp$, together with the choice of an orthonormal coordinate system lying on it, with the origin in~$\gp$, the first axis (the abscissa) tangent to~$\gmnf$ at~$\gp$ and the second axis (the ordinate) normal to~$\gmnf$ at~$\gp$.
			\end{itemize}
		\end{definition}
		
		Recall from Proposition~\ref{proposition:properly-embedded-submanifolds} that distances from points to a non-empty properly embedded submanifold are attained. However, these distances need not be attained in just one point. As usual, we define the \df{medial axis}~$\ma{\gmnf}$ of a submanifold $\gmnf \subseteq \RR^\ad$ as the set of all points in the ambient space for which the distance to~$\gmnf$ is attained in at least two points:
		\[\ma{\gmnf} \dfeq \set[1]{\gp \in \RR^\ad}{\some{\sgp', \sgp'' \in \gmnf}{\sgp' \neq \sgp'' \land \dst(\gp, \sgp') = \dst(\gp, \sgp'') = \dst(\gp, \gmnf)}}.\]
		If $\gmnf$ is empty, so is~$\ma{\gmnf}$, though the medial axis can be empty even for non-empty manifolds (consider for example a line or a line segment in a plane). The manifold and its medial axis are always disjoint.
		
		The \df{reach} of~$\gmnf$, denoted by $\rch{\gmnf}$, is the distance between the manifold $\gmnf$ and its medial axis~$\ma{\gmnf}$ (if $\ma{\gmnf}$ is empty, the reach is defined to be~$\infty$).
		
		\begin{definition}
			Let $\gmnf$ be a $\C{1}$-submanifold of~$\RR^\ad$, $\gp \in \gmnf$ and $\vec{N}$ a non-zero normal vector to~$\gmnf$ at~$\gp$. The $\rch{\gmnf}$-ball, \df{associated} to~$\gp$ and~$\vec{N}$, is the closed ball (in~$\RR^\ad$, so $\ad$-dimensional) with radius~$\rch{\gmnf}$ and centered at $\gp + \rch{\gmnf} \frac{\vec{N}}{\|\vec{N}\|}$, which therefore touches~$\gmnf$ at~$\gp$.\footnote{If $\rch{\gmnf} = \infty$, this ``ball'' is the whole closed half-space which contains~$\vec{N}$ and the boundary of which is the hyperplane, orthogonal to~$\vec{N}$, which contains~$\gp$.} A $\rch{\gmnf}$-ball, \df{associated} to~$\gp$, is the $\rch{\gmnf}$-ball, associated to~$\gp$ and some non-zero normal vector to~$\gmnf$ at~$\gp$.
		\end{definition}
		
		The significance of associated $\rch{\gmnf}$-balls is that they provide restrictions to where a manifold is situated. Specifically, a manifold is disjoint with the interior of its every associated $\rch{\gmnf}$-ball.
		
		We will approximate manifolds with a union of ellipsoids (similar as to how one uses a union of balls to approximate a subspace in the case of a \v{C}ech complex). The idea is to use ellipsoids which are elongated in directions, tangent to the manifold, so that they ``extend longer in the direction the manifold does'', so that we require a sample with lower density.
		
		Let us define the kind of ellipsoids we use in this paper.
		
		\begin{definition}
			Let $\gmnf$ be a $\C{1}$-submanifold of~$\RR^\ad$ and $\pp \in \RR_{> 0}$. The \df{tangent-normal open} (resp.~\df{closed}) \df{$\pp$-ellipsoid at $\gp \in \gmnf$} is the open (resp.~closed) ellipsoid in~$\RR^\ad$ with the center in~$\gp$, the tangent semi-axes of length $\sqrt{\rch{\gmnf} \pp + \pp^2}$ and the normal semi-axes of length~$\pp$. Explicitly, in a tangent-normal coordinate system at~$\gp$ the tangent-normal open and closed $\pp$-ellipsoids are given by
			\begin{align*}
				\of{\gp} &\dfeq \set[2]{(x_1, \ldots, x_\ad) \in \RR^\ad}{\frac{x_1^2 + \ldots + x_\md^2}{\rch{\gmnf} \pp + \pp^2} + \frac{x_{\md + 1}^2 + \ldots + x_\ad^2}{\pp^2} < 1}, \\
				\cf{\gp} &\dfeq \set[2]{(x_1, \ldots, x_\ad) \in \RR^\ad}{\frac{x_1^2 + \ldots + x_\md^2}{\rch{\gmnf} \pp + \pp^2} + \frac{x_{\md + 1}^2 + \ldots + x_\ad^2}{\pp^2} \leq 1},
			\end{align*}
			where $\md$ denotes the dimension of~$\gmnf$ at~$\gp$. If $\rch{\gmnf} = \infty$, then these ``ellipsoids'' are simply thickenings of~$\ts[\gmnf]{\gp}$:
			\begin{align*}
				\of{\gp} &\dfeq \set[1]{(x_1, \ldots, x_\ad) \in \RR^\ad}{\sqrt{x_{\md + 1}^2 + \ldots + x_\ad^2} < \pp}, \\
				\cf{\gp} &\dfeq \set[1]{(x_1, \ldots, x_\ad) \in \RR^\ad}{\sqrt{x_{\md + 1}^2 + \ldots + x_\ad^2} \leq \pp}.
			\end{align*}
			Observe that the definitions of ellipsoids are independent of the choice of the tangent-normal coordinate system; they depend only on the submanifold itself.
		\end{definition}
		
		The value~$\pp$ in the definition of ellipsoids serves as a ``persistence parameter''~\cite{ghrist}, \cite{topodata}, \cite{ZC}, \cite{elz-tps-02}, \cite{breiding2018learning}. We purposefully do not take ellipsoids which are similar at all~$\pp$ (which would mean that the ratio between the tangent and the normal semi-axes was constant). Rather, we want ellipsoids which are more elongated (have higher eccentricity) for smaller~$\pp$. This is because on a smaller scale a smooth manifold more closely aligns with its tangent space, and then so should the ellipsoids. We want the length of the major semi-axes to be a function of~$\pp$ with the following properties: for each $\pp$ its value is larger than~$\pp$, and when $\pp$ goes to~$0$, the function value also goes to~$0$, but the eccentricity goes to~$1$. In addition, the function should allow the following argument. If we change the unit length of the coordinate system, but otherwise leave the manifold ``the same'', we want the ellipsoids to remain ``the same'' as well, but the reach of the manifold changes by the same factor as the unit length, which the function should take into account. The simplest function satisfying all these properties is arguably $\sqrt{\rch{\gmnf} \pp + \pp^2}$, which turns out to work for the results we want.
		
		Figure~\ref{figure:tangent-normal-coordinate-system} shows an example, how a manifold, associated balls and a tangent-normal ellipsoid look like in a tangent-normal coordinate system at some point on the manifold.
		\begin{figure}[!ht]
			\centering
			\includegraphics[width=0.7\textwidth]{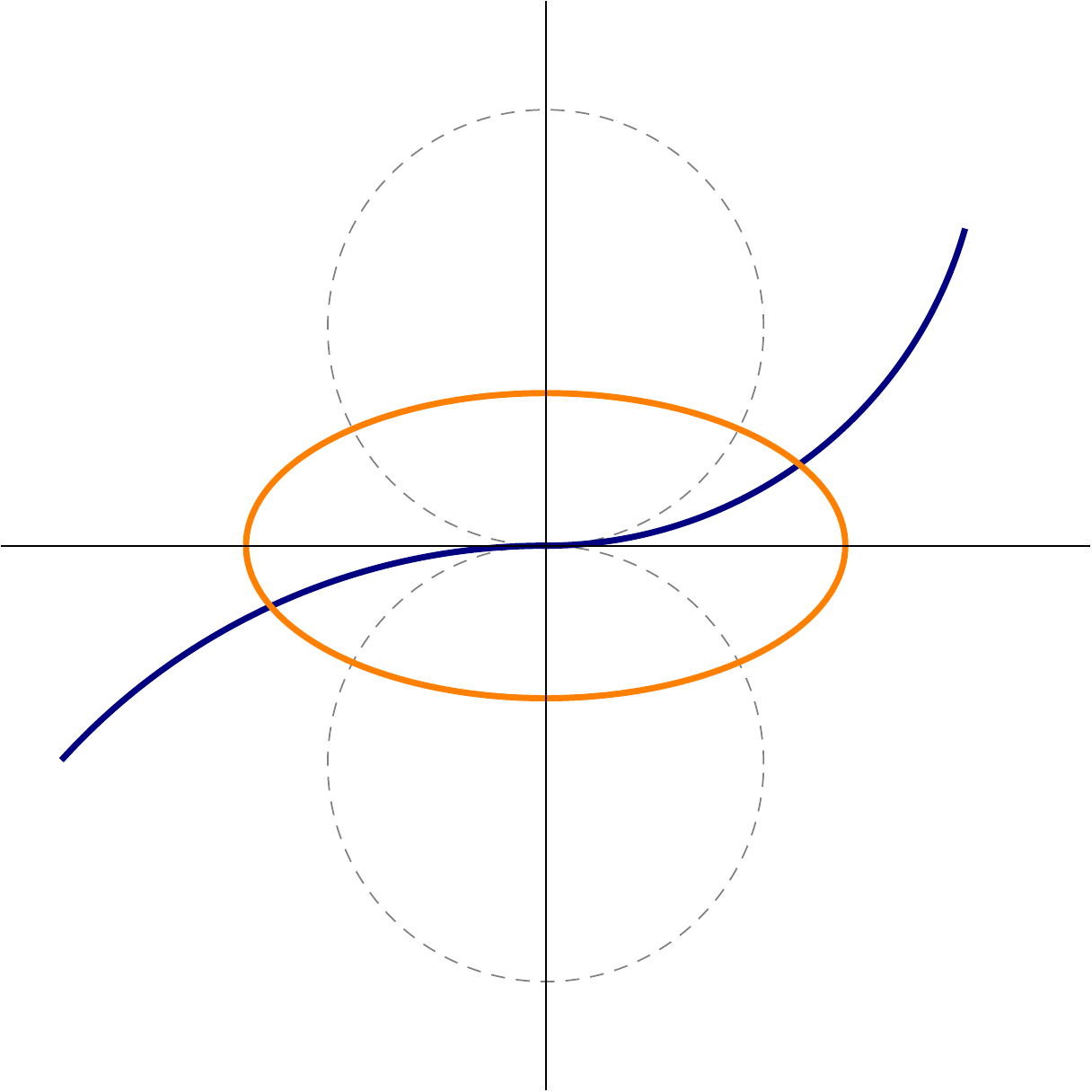}
			\caption{Tangent-normal coordinate system.}\label{figure:tangent-normal-coordinate-system}
		\end{figure}
		
		We now prove a few results that will be useful later.
		
		\begin{lemma}\label{lemma:general-manifold-properties}
			Let $\gmnf$ be a properly embedded $\C{1}$-submanifold of~$\RR^\ad$. Let $\gp \in \gmnf$ and let $\md$ be the dimension of~$\gmnf$ at~$\gp$. Assume $0 < \md < \ad$.
			\begin{enumerate}
				\item\label{lemma:general-manifold-properties:planar-tangent-normal-coordinate-system}
					For every $\sgp \in \RR^\ad$ a planar tangent-normal coordinate system at $\gp \in \gmnf$ exists which contains~$\sgp$. Without loss of generality we may require that the coordinates of~$\sgp$ in this coordinate system are non-negative ($\sgp$ lies in the closed first quadrant).
				\item
					If $\pp \in \RR_{> 0}$, $\sgp \in \bd{\cf{\gp}}$ and $\vec{N}$ is a vector, normal to $\bd{\cf{\gp}}$ at~$\sgp$, then we may additionally assume that the planar tangent-normal coordinate system from the previous item contains~$\vec{N}$.
				\item
					Let $\st{O}$ be a closed $(\ad-\md+1)$-dimensional ball, $\C{1}$-embedded in~$\RR^\ad$ (in particular $\bd{\st{O}}$ is a $\C{1}$-submanifold of~$\RR^\ad$, diffeomorphic to an $(\ad-\md)$-dimensional sphere). Assume that $\st{O} \cap \bd{\gmnf} = \emptyset$ and that $\gmnf$ and $\bd{\st{O}}$ intersect transversely in~$\gp$. Then $\gp$ is not the only intersection point, i.e.~there exists $\sgp \in \gmnf \cap \bd{\st{O}} \setminus \set{\gp}$.
				\item\label{lemma:general-manifold-properties:bound-on-distance-to-manifold}
					Assume $\rch{\gmnf} < \infty$. Let $\sgp \in \RR^n$ and let $(y_T, y_N)$ be the (non-negative) coordinates of~$\sgp$ in the planar tangent-normal coordinate system from the first item. Let $\st{D}$ be the set of centers of all $\rch{\gmnf}$-balls, associated to~$\gp$ (i.e.~the $(\ad-\md-1)$-dimensional sphere within~$\ns[\gmnf]{\gp}$ with the center in~$\gp$ and the radius~$\rch{\gmnf}$). Let $\st{C}$ be the cone which is the convex hull of~$\st{D} \cup \set{\sgp}$, and assume that $\st{C} \cap \bd{\gmnf} = \emptyset$. Then
					\[\dst(\gmnf, \sgp) \leq \sqrt{y_T^2 + (y_N + \rch{\gmnf})^2} - \rch{\gmnf}.\]
			\end{enumerate}
		\end{lemma}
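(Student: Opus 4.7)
My plan is to prove the four items in order. Items (1) and (2) amount to an orthogonal decomposition of $\sgp - \gp$; item (3) is a classical topological argument via the classification of compact $1$-manifolds; item (4) combines (3) with a reach-based distance estimate along the cone $\st{C}$.

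For items (1) and (2), I decompose $\sgp - \gp = \vec{v}_T + \vec{v}_N$ with $\vec{v}_T \in \ts[\gmnf]{\gp}$ and $\vec{v}_N \in \ns[\gmnf]{\gp}$. When both summands are non-zero, the plane through $\gp$ spanned by $\vec{v}_T$ and $\vec{v}_N$, with abscissa along $\vec{v}_T$ and ordinate along $\vec{v}_N$, is the desired planar tangent-normal system; the coordinates of $\sgp$ are $(\|\vec{v}_T\|, \|\vec{v}_N\|) \geq 0$. Degenerate cases ($\vec{v}_T = 0$ or $\vec{v}_N = 0$) are handled by completing with any unit vector in the complementary subspace, using $0 < \md < \ad$. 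For (2), in the full tangent-normal coordinate system at $\gp$, $\bd{\cf{\gp}}$ is the level set of $F(x) = \|x_T\|^2 / (\rch{\gmnf}\pp + \pp^2) + \|x_N\|^2 / \pp^2$, so the normal vector $\vec{N}$ (proportional to $\nabla F(\sgp)$) has tangent part proportional to $(\sgp - \gp)_T$ and normal part proportional to $(\sgp - \gp)_N$; hence $\vec{N}$ lies in the plane from (1), also in the degenerate cases.

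For item (3), I would argue by contradiction: suppose $\gmnf \cap \bd{\st{O}} = \{\gp\}$. The transverse intersection at $\gp$ makes $\gmnf \cap \st{O}$ locally a $1$-manifold with boundary point $\gp$. By a small generic $\C{1}$-perturbation of $\st{O}$ (justified by Sard's theorem and the openness of transversality), I may assume $\gmnf$ is transverse both to $\st{O}$ and to $\bd{\st{O}}$ everywhere, while persistence of transverse intersections keeps $\gmnf \cap \bd{\st{O}}$ a single point near $\gp$. Then $\gmnf \cap \st{O}$ is a compact $1$-manifold with exactly one boundary point, contradicting the classification of compact $1$-manifolds with boundary (the boundary of such a manifold must have an even number of points).

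For item (4), I apply (3) to a $\C{1}$-ball $\st{O} \subseteq W$ obtained by smoothing the two singular strata of the cone $\st{C}$---its apex $\sgp$ and its rim $\st{D}$---where $W$ is the $(\ad-\md+1)$-dimensional affine subspace through $\gp$ spanned by the planar tangent direction and $\ns[\gmnf]{\gp}$. Near $\gp$, $\st{O}$ coincides with $\st{C}$, so $\bd{\st{O}}$ at $\gp$ has tangent space $\ns[\gmnf]{\gp}$, transverse to $\ts[\gmnf]{\gp}$; the cone hypothesis $\st{C} \cap \bd{\gmnf} = \emptyset$ and compactness of $\st{C}$ give $\st{O} \cap \bd{\gmnf} = \emptyset$ for sufficiently fine smoothing. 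Item (3) then supplies $\sgp' \in \gmnf \cap \bd{\st{O}} \setminus \{\gp\}$. The reach of $\gmnf$ rules out $\sgp'$ lying in the flat base (any point other than $\gp$ within $\rch{\gmnf}$ of $\gp$ in $\ns[\gmnf]{\gp}$ sits inside the open associated $\rch{\gmnf}$-ball whose center is on the ray from $\gp$ through that point) or in the smoothed rim region (within $\epsilon < \rch{\gmnf}$ of some $c \in \st{D}$, placing it inside the associated $\rch{\gmnf}$-ball centered at $c$). Hence $\sgp'$ lies essentially on a segment from $\sgp$ to some $c \in \st{D}$; since $\gmnf$ avoids the interior of the associated ball at $c$, we have $\dst(\sgp', c) \geq \rch{\gmnf}$, and collinearity on the segment gives $\dst(\sgp', \sgp) \leq \dst(\sgp, c) - \rch{\gmnf}$. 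Maximising $\dst(\sgp, c)$ over $c \in \st{D}$ yields $\sqrt{y_T^2 + (y_N + \rch{\gmnf})^2}$, attained at the $c$ opposite to the normal part of $\sgp - \gp$, which produces the claimed bound. The hardest step will be the perturbation-and-smoothing framework: in (3) one must secure simultaneous transversality of $\gmnf$ with both the perturbed $\st{O}$ and $\bd{\st{O}}$, and in (4) one must verify that the second intersection is forced onto the lateral surface (not the smoothed corners) and that the estimate survives the limit as the smoothing shrinks.
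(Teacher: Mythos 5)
Your items (1), (2) and (4) follow essentially the same route as the paper: the tangent/normal decomposition of $\sgp-\gp$ with the degenerate directions supplied by $0<\md<\ad$, the gradient computation for the ellipsoid normal, and, for (4), smoothing the cone inside the $(\ad-\md+1)$-dimensional affine subspace, invoking item (3) at~$\gp$, excluding the base and the smoothed strata via associated $\rch{\gmnf}$-balls, and maximizing the distance to the rim at the center opposite the normal part of~$\sgp$. Item (3) is where you genuinely diverge. The paper reduces to a compact boundaryless submanifold of~$S^\ad$ by doubling across a thickened ball (with Whitney approximation to restore $\C{1}$-smoothness) and then argues cohomologically: $[\gmnf]\smile[\bd{\st{O}}]$ vanishes in $H^\ad(S^\ad;\ZZ_2)$ because the intermediate cohomology of the sphere is trivial, while a single transverse intersection would make it the generator. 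You instead make $\gmnf$ transverse to the interior of~$\st{O}$ and use the parity of boundary points of the compact $1$-manifold $\gmnf\cap\st{O}$. This is more elementary and avoids both the doubling and the cohomology, but it pays a price the paper does not: the paper needs transversality only at the single hypothesized point of $\gmnf\cap\bd{\st{O}}$, whereas you must achieve transversality along the whole interior intersection, and the genericity step you attribute to Sard is delicate with merely $\C{1}$ data --- in the parametric transversality argument the relevant projection is a $\C{1}$ map from an $(\ad+1)$-dimensional manifold to an $\ad$-dimensional parameter space, where Sard requires $\C{2}$ --- so you would need an additional smoothing/approximation step, comparable in spirit to the paper's use of Whitney approximation. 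Your rel-boundary control (perturb only away from a collar of $\bd{\st{O}}$, using that $\gmnf\cap\st{O}$ minus a neighbourhood of~$\gp$ has positive distance to $\bd{\st{O}}$) is the right idea and should be made explicit.

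Two small omissions in (4): your construction presumes $\sgp\notin\ns[\gmnf]{\gp}$, since for $y_T=0$ the cone~$\st{C}$ lies inside $\ns[\gmnf]{\gp}$ and is not an $(\ad-\md+1)$-ball; this degenerate case must be (and is trivially) handled separately, via $\dst(\gmnf,\sgp)\le\dst(\gp,\sgp)=y_N$. Likewise you should dispose of $\sgp\in\gmnf$ first (the bound is then trivial), because only when $\dst(\gmnf,\sgp)>0$ can the apex be smoothed at a scale below that distance, which is exactly how the paper removes the ``smoothed corner'' worry you flag at the end.
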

		
		\begin{proof}
			\
			\begin{enumerate}
				\item
					Fix an $\ad$-dimensional tangent-normal coordinate system at $\gp \in \gmnf$, and let $(y_1, \ldots, y_\ad)$ be the coordinates of~$\sgp$. Let $\vec{a} = (y_1, \ldots, y_\md, 0, \ldots, 0)$, $\vec{b} = (0, \ldots, 0, y_{\md+1}, \ldots, y_\ad)$. If both $\vec{a}$ and $\vec{b}$ are non-zero, they define a (unique) planar tangent-normal coordinate system at~$\gp$ which contains~$\sgp$. If $\vec{a}$ is zero (resp.~$\vec{b}$ is zero), choose an arbitrary tangent (resp.~normal) direction (we may do this since $0 < \md < \ad$).
				\item
					Assume that $\sgp \in \bd{\cf{\gp}}$ and $\vec{N}$ is a direction, normal to~$\bd{\cf{\gp}}$. In the $\ad$-dimensional tangent-normal coordinate system from the previous item, the boundary $\bd{\cf}$ is given by the equation
					\[\sum_{i = 1}^{\md} \frac{x_i^2}{\rch{\gmnf} \pp + \pp^2} + \sum_{j = \md + 1}^{\ad} \frac{x_j^2}{\pp^2} = 1.\]
					The gradient of the left-hand side, up to a scalar factor, is
					\[\big(\tfrac{x_1}{\rch{\gmnf} \pp + \pp^2}, \ldots, \tfrac{x_\md}{\rch{\gmnf} \pp + \pp^2}, \tfrac{x_{\md + 1}}{\pp^2}, \ldots, \tfrac{x_\ad}{\pp^2}\big) = \tfrac{1}{\rch{\gmnf} \pp + \pp^2} \, \vec{a} + \tfrac{1}{\pp^2} \, \vec{b}.\]
					The vector~$\vec{N}$ has to be parallel to it since $\bd{\cf{\gp}}$ has codimension~$1$, i.e.~a non-zero $\lambda \in \RR$ exists such that $\vec{N} = \tfrac{\lambda}{\rch{\gmnf} \pp + \pp^2} \, \vec{a} + \tfrac{\lambda}{\pp^2} \, \vec{b}$. Hence $\vec{N}$ also lies in the plane, determined by $\vec{a}$ and~$\vec{b}$. This proof works for $\rch{\gmnf} < \infty$, but the required modification for $\rch{\gmnf} = \infty$ is trivial.
				\item
					Since $\st{O}$ is a compact $(\ad-\md+1)$-dimensional disk and $\bd{\gmnf}$ is closed, some thickening of~$\st{O}$ exists --- denote it by~$\st{T}$ --- which is diffeomorphic to an $\ad$-dimensional ball and is still disjoint with~$\bd{\gmnf}$. With a small perturbation of~$\gmnf$ around $\gmnf \cap \bd{\st{T}}$ (but away from the intersection $\gmnf \cap \st{O}$ which must remain unchanged) we can achieve that $\gmnf$ and $\bd{\st{T}}$ only have transversal intersections~\cite{lee2013smooth}.
					
					Imagine $\RR^\ad$ embedded into its one-point compactification~$S^\ad$ (denote the added point by~$\infty$) in such a way that $\st{T}$ is a hemisphere. Replace the part of~$\gmnf$ outside of~$\st{T}$ with a copy of $\gmnf \cap \st{T}$, reflected over~$\bd{\st{T}}$, and denote the obtained space by~$\gmnf'$. This is an embedding of the so-called \df{double} of the manifold~$\gmnf \cap \st{T}$. Then $\gmnf'$ is a manifold without boundary, closed in the sphere, and therefore compact. If necessary, perturb it slightly around the point~$\infty$, so that $\infty \notin \gmnf'$. Hence $\gmnf'$ is a compact submanifold in~$\RR^\ad$ without boundary and $\C{1}$-smooth everywhere except possibly on $\gmnf' \cap \bd{\st{T}}$. The double of a $\C{1}$-manifold can be equipped with a $\C{1}$-structure. Therefore we can use Whitney's approximation theorem~\cite{lee2013smooth} to adjust the embedding of $\gmnf'$ on a neighbourhood of $\bd{\st{T}}$ away from~$\st{O}$, so that it is $\C{1}$-smooth everywhere. The result is a compact manifold~$\gmnf'$ without boundary satisfying all the properties we required of~$\gmnf$, and we have $\gmnf' \cap \st{O} = \gmnf \cap \st{O}$. This shows that we may without loss of generality assume that $\gmnf$ is compact without boundary.
					
					Any compact $k$-dimensional submanifold of~$S^\ad$ without boundary represents an element in the cohomology $H^k(S^\ad; \ZZ_2)$ (we take the $\ZZ_2$-coeficients, so that we do not have to worry about orientation). For elements $[\gmnf] \in H^\md(S^\ad; \ZZ_2)$ and $[\bd{\st{O}}] \in H^{\ad-\md}(S^\ad; \ZZ_2)$ we know~\cite{bredon2013topology} that their cup-product $[\gmnf] \mathop{\smile} [\bd{\st{O}}] \in H^\ad(S^\ad; \ZZ_2)$ is the intersection number of $\gmnf$ and~$\bd{\st{O}}$ (times the generator). Since the cohomology of~$S^\ad$ is trivial except in dimensions~$0$ and~$\ad$, we have $[\gmnf] = [\bd{\st{O}}] = 0$, and hence $[\gmnf] \mathop{\smile} [\bd{\st{O}}] = 0$. But the local intersection number in the transversal intersection~$\gp$ is~$1$, and the intersection number is the sum of local ones, so $\gp$ cannot be the only point in $\gmnf \cap \bd{\st{O}}$.
				\item
					First consider the case when $\sgp \in \ns[\gmnf]{\gp}$, i.e.~$y_T = 0$. Then
					\[\dst(\gmnf, \sgp) \leq \dst(\gp, \sgp) = y_N = \sqrt{y_T^2 + (y_N + \rch{\gmnf})^2} - \rch{\gmnf}.\]
					Now suppose $\sgp \notin \ns[\gmnf]{\gp}$. Then the cone~$\st{C}$ is homeomorphic to an $(\ad-\md+1)$-dimensional closed ball. This~$\st{C}$ and its boundary are smooth everywhere except in~$\sgp$ and on~$\st{D}$. Let $\st{E}$ be the $(\ad-\md+1)$-dimensional affine subspace which contains~$\sgp$ and~$\ns[\gmnf]{\gp}$ (thus the whole~$\st{C}$). We can smooth~$\bd{\st{C}}$ around the centers of the associated balls within~$\st{E}$ without affecting the intersection with~$\gmnf$ since $\gmnf$ is disjoint with the interiors of the associated $\rch{\gmnf}$-balls. If $\sgp \in \gmnf$, then $\dst(\gmnf, \sgp) = 0 \leq \sqrt{y_T^2 + (y_N + \rch{\gmnf})^2} - \rch{\gmnf}$, and we are done. If $\sgp \notin \gmnf$, then $\dst(\gmnf, \sgp) > 0$ since $\gmnf$ is a closed subset. Then we can also smooth $\bd{\st{C}}$ around~$\sgp$ within~$\st{E}$ without affecting the intersection with~$\gmnf$. The boundary smoothed in this way is diffeomorphic to an $(\ad-\md)$-dimensional sphere, and so by the generalized Schoenflies theorem splits~$\st{E}$ into the inner part, diffeomorphic to an $(\ad-\md+1)$-dimensional ball, and the outer unbounded part. Since $\gmnf$ intersects~$\bd{\st{C}}$ and therefore also its smoothed version orthogonally in~$\gp$, this intersection is transversal. By the previous item another intersection point $\gp' \in \gmnf \cap \bd{\st{C}} \setminus \set{\gp}$ exists. It cannot lie in~$\ns[\gmnf]{\gp}$ since we would then have a manifold point in the interior of some associated ball, so $\gp'$ must lie on the lateral surface of the cone. That is, $\gp'$ lies on the line segment between $\sgp$ and some associated ball center, but it cannot lie in the interior of the associated ball, so $\dst(\gp', \sgp)$ is bounded by the distance between $\sgp$ and the furthest associated ball center, decreased by~$\rch{\gmnf}$. The furthest center is the one within the starting planar tangent-normal coordinate system that has coordinates~$(0, -\rch{\gmnf})$. Thus
					\[\dst(\gmnf, \sgp) \leq \dst(\gp', \sgp) \leq \sqrt{y_T^2 + (y_N + \rch{\gmnf})^2} - \rch{\gmnf}.\]
			\end{enumerate}
		\end{proof}
		
		\begin{lemma}\label{lemma:inside-ellipsoid}
			Let $A, B \in \RR_{\geq 0}$ which are not both~$0$ and let $\rch \in \RR_{> 0}$. Then a unique $q \in \RR_{> 0}$ exists which solves the equation
			\[\frac{A}{\rch q + q^2} + \frac{B}{q^2} = 1.\]
			Moreover, this $q$ depends continuously on $A$ and~$B$, and if $(A, B) \to (0, 0)$ (with $\rch$ fixed), then $q \to 0$.
		\end{lemma}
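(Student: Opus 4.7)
The plan is to fix $\tau$ and $(A,B)$ (not both zero) and study the function
\[g(q) \dfeq \frac{A}{\rch q + q^2} + \frac{B}{q^2}\]
on $q \in \RR_{>0}$. Because $g$ is a sum of rational functions with strictly positive denominators on the open half-line, it is $\C{\infty}$ there. A direct derivative computation gives
\[g'(q) = -\frac{A(\rch + 2q)}{(\rch q + q^2)^2} - \frac{2B}{q^3},\]
and since $A, B \geq 0$ are not both zero and $\rch, q > 0$, both summands are nonpositive and at least one is strictly negative. Hence $g$ is strictly decreasing on $(0, \infty)$.

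Next I would compute the two one-sided limits. As $q \to \infty$ both fractions tend to $0$, so $g(q) \to 0$. As $q \to 0^+$, if $B > 0$ then $B/q^2 \to \infty$; if $B = 0$ then necessarily $A > 0$ and $A/(\rch q + q^2) \to \infty$; either way $g(q) \to \infty$. By the intermediate value theorem there exists $q \in \RR_{>0}$ with $g(q) = 1$, and by the strict monotonicity this $q$ is unique.

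Continuous dependence on $(A,B)$ is then a standard consequence of strict monotonicity: the map $G \colon \RR_{\geq 0}^2 \setminus \set{(0,0)} \times \RR_{>0} \to \RR$, $(A,B,q) \mapsto g(q)$, is jointly continuous, strictly decreasing in $q$, with the limits above uniform on compact sets of parameters. Given a sequence $(A_k, B_k) \to (A, B)$ with associated solutions $q_k$, any subsequential limit $q^*$ of $q_k$ in $(0, \infty)$ must satisfy $G(A, B, q^*) = 1$, hence equals the unique solution; together with an a priori bound on $q_k$ preventing escape to $0$ or $\infty$, this gives $q_k \to q$. (Alternatively, one may apply the implicit function theorem directly, using that $\partial g/\partial q < 0$.)

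For the final claim, I would exploit the elementary inequality $\rch q + q^2 \geq q^2$ for $\rch, q \geq 0$, which gives
\[1 = \frac{A}{\rch q + q^2} + \frac{B}{q^2} \leq \frac{A + B}{q^2},\]
so $q \leq \sqrt{A + B}$. Hence $(A,B) \to (0,0)$ forces $q \to 0$. The only mild subtlety in the whole argument is ruling out $q_k \to \infty$ in the continuity step, but this is immediate since $g(q_k) = 1$ would then contradict $g(q) \to 0$ at infinity (with the convergence being uniform for $(A_k, B_k)$ in a bounded neighbourhood of $(A,B)$).
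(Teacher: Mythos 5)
Your proof is correct, but it follows a different route from the paper. The paper clears denominators and reduces the problem to the cubic $q^3 + \tau q^2 - (A+B)q - \tau B = 0$ (treating the boundary cases $A=0$ and $B=0$ separately with explicit formulas), locates a unique positive root by analyzing the sign of the cubic and its derivative, and then obtains both the continuity statement and the limit $(A,B)\to(0,0) \Rightarrow q\to 0$ from continuous dependence of polynomial roots on coefficients. You instead work directly with $g(q) = \frac{A}{\tau q + q^2} + \frac{B}{q^2}$: strict monotonicity in $q$ together with the limits $g\to\infty$ at $0^+$ and $g\to 0$ at $\infty$ gives existence and uniqueness by the intermediate value theorem, uniformly in the boundary cases; continuity comes from monotonicity plus a compactness/subsequence argument (or the implicit function theorem, since $\partial g/\partial q < 0$); and the final claim follows from the explicit bound $q \leq \sqrt{A+B}$, which is arguably cleaner and more quantitative than the paper's identification of the limiting root of $q^3 + \tau q^2$. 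The only presentational loose end is that you assert, rather than spell out, the a priori bound keeping $q_k$ away from $0$ in the continuity step; it is a one-line estimate (if $q_k \to 0$ while $(A_k,B_k) \to (A,B) \neq (0,0)$, then $g_{A_k,B_k}(q_k) \to \infty$, contradicting $g_{A_k,B_k}(q_k) = 1$), symmetric to the one you give for ruling out $q_k \to \infty$, so this is not a genuine gap.
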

		
		\begin{proof}
			If $A = 0$, then clearly $q = \sqrt{B} > 0$ works. If $B = 0$, then the unique positive solution to the quadratic equation $q^2 + \rch q - A = 0$ is $q = \frac{\sqrt{\rch^2 + 4A} - \rch}{2}$.
			
			Assume that $A, B > 0$. Multiply the equation from the lemma by~$q^2 (\rch+q)$ and take all terms to one side of the equation to get
			\[q^3 + \rch q^2 - (A+B) q - \rch B = 0.\]
			Define the function $f\colon \RR \to \RR$ by $f(x) \dfeq x^3 + \rch x^2 - (A+B) x - \rch B$. The zeros of its derivative $f'(x) = 3x^2 + 2\rch x - (A+B)$ are
			\[\frac{-\rch \pm \sqrt{\rch^2 + 3(A+B)}}{3};\]
			since $A+B > 0$, both zeros are real and one is negative, the other positive. Let $z$ denote the positive zero. We have $f(0) = -\rch B < 0$ and $f'$ is $\leq 0$ on~$\intcc{0}{z}$, so $f$ cannot have a zero here, and $f(z) < 0$. Since $f$ is strictly increasing on $\RR_{> z}$ and $\lim_{x \to \infty} f(x) = \infty$, we conclude that $f$ has a unique zero on~$\RR_{> z}$ and therefore also on~$\RR_{> 0}$.
			
			Since $q$ is the root of the polynomial $q^3 + \rch q^2 - (A+B) q - \rch B$ and polynomial roots depend continuously on the coefficients, $q$ depends continuously on $A$ and~$B$ as well. In particular, if $A$ and $B$ tend to~$0$, then $q$ tends to one of the roots of $q^3 + \rch q^2$. It cannot tend to~$-\rch$ since it is positive, so it tends to~$0$.
		\end{proof}
		
		Given a properly embedded $\C{1}$-submanifold $\gmnf \subseteq \RR^\ad$ without boundary and a point $\sgp \in \gmnf$, the dimension of~$\gmnf$ at which we denote by~$\md$, let us define the continuous function $\pep[\sgp]\colon \RR^\ad \to \RR_{\geq 0}$ in the following way.
		\begin{definition}\label{definition:depth-parameter}
			If $\rch{\gmnf} = \infty$, then ${\pep[\sgp](\gp) \dfeq \dst(\gp, \gmnf)}$ (this also covers the case $\md = \ad$ since then necessarily $\gmnf = \RR^\ad$). Otherwise, if $\gmnf$ has dimension~$0$, then ${\pep[\sgp](\gp) \dfeq \dst(\gp, \sgp)}$. If both the dimension and codimension of~$\gmnf$ are positive and $\rch{\gmnf} < \infty$, we split the definition of $\pep[\sgp]$ into two cases. Let $\pep[\sgp](\sgp) \dfeq 0$. For $\gp \in \RR^\ad \setminus \set{\sgp}$ introduce a tangent-normal coordinate system with the origin in~$\sgp$ (it exists by Lemma~\ref{lemma:general-manifold-properties}(\ref{lemma:general-manifold-properties:planar-tangent-normal-coordinate-system})). Let $\gp = (x_1, \ldots, x_\ad)$ be the coordinates of~$\gp$ in this coordinate system. Define $\pep[\sgp](\gp)$ to be the unique element in~$\RR_{> 0}$ which satisfies the equation
			\[\frac{x_1^2 + \ldots + x_\md^2}{\rch{\gmnf} \, \pep[\sgp](\gp) + \pep[\sgp](\gp)^2} + \frac{x_{\md+1}^2 + \ldots + x_\ad^2}{\pep[\sgp](\gp)^2} = 1.\]
		\end{definition}
		Since the sum of squares of coordinates is independent of the choice of an orthonormal coordinate system, this equation depends only on $\gp$ and~$\sgp$. Lemma~\ref{lemma:inside-ellipsoid} guarantees existence, uniqueness and continuity of~$\pep[\sgp](\gp)$.
		
		The point of this definition is that (except in the case $\md = \ad$, when all ellipsoids are the whole~$\RR^\ad$) the unique ellipsoid of the form~$\cf{\sgp}{r}$ which has $\gp$ in its boundary has $r = \pep[\sgp](\gp)$, i.e.~$\gp \in \bd{\cf{\sgp}{\pep[\sgp](\gp)}}$.
		
		\begin{lemma}\label{lemma:distance-to-manifold}
			Let $\gmnf$ be a properly embedded $\C{1}$-submanifold of~$\RR^\ad$. Let $\gp \in \RR^\ad$ and $\sgp \in \gmnf$. Then $\dst(\gmnf, \gp) \leq \pep[\sgp](\gp)$.
		\end{lemma}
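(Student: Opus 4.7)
The plan is to reduce the statement to the non-degenerate case of Definition~\ref{definition:depth-parameter} and then combine the defining ellipsoid equation for $q \dfeq \pep[\sgp](\gp)$ with the cone bound from Lemma~\ref{lemma:general-manifold-properties}(\ref{lemma:general-manifold-properties:bound-on-distance-to-manifold}).

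First I would dispatch the easy cases from the piecewise definition. When $\rch{\gmnf} = \infty$, the value $\pep[\sgp](\gp)$ is \emph{defined} to be $\dst(\gp, \gmnf)$, so the inequality is an equality; when $\gmnf$ is zero-dimensional, $\pep[\sgp](\gp) = \dst(\gp, \sgp) \geq \dst(\gp, \gmnf)$ since $\sgp \in \gmnf$; and if $\gp = \sgp$, both sides are zero. So I may assume $\rch{\gmnf} < \infty$, $0 < \md < \ad$, and $\gp \neq \sgp$.

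In this main case, I would use Lemma~\ref{lemma:general-manifold-properties}(\ref{lemma:general-manifold-properties:planar-tangent-normal-coordinate-system}) to pick a planar tangent-normal coordinate system at $\sgp$ containing $\gp$, with non-negative coordinates $(y_T, y_N)$ for $\gp$. Since sums of squared tangent (resp.\ normal) coordinates are invariant under orthonormal change of frame inside the respective subspaces, the defining equation collapses to
\[\frac{y_T^2}{\rch{\gmnf}\, q + q^2} + \frac{y_N^2}{q^2} = 1.\]
Because $\gmnf$ has no boundary (built into Definition~\ref{definition:depth-parameter}), the hypothesis $\st{C} \cap \bd{\gmnf} = \emptyset$ of Lemma~\ref{lemma:general-manifold-properties}(\ref{lemma:general-manifold-properties:bound-on-distance-to-manifold}) is vacuous, and that lemma yields
\[\dst(\gmnf, \gp) \leq \sqrt{y_T^2 + (y_N + \rch{\gmnf})^2} - \rch{\gmnf}.\]

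It then suffices to check $\sqrt{y_T^2 + (y_N + \rch{\gmnf})^2} - \rch{\gmnf} \leq q$. Moving $\rch{\gmnf}$ over, squaring, and cancelling reduces this to $y_T^2 + y_N^2 + 2 y_N \rch{\gmnf} \leq q^2 + 2 q \rch{\gmnf}$. Clearing denominators in the ellipsoid equation and dividing by $q^2$ expresses $y_T^2 + y_N^2$ in terms of $q, y_N, \rch{\gmnf}$; substituting collapses the desired inequality to $(q - y_N)^2 \geq 0$, which is immediate. I do not foresee a real obstacle: all the geometric content sits in the cone bound of the previous lemma, and the remaining task is a short algebraic identity once the planar coordinate system is in place.
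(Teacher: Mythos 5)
Your proposal is correct and takes essentially the same route as the paper: handle the degenerate cases, place $\gp$ in a planar tangent-normal coordinate system at $\sgp$, and apply the cone bound of Lemma~\ref{lemma:general-manifold-properties}(\ref{lemma:general-manifold-properties:bound-on-distance-to-manifold}), noting the boundary hypothesis is vacuous. The only difference is cosmetic: the paper parametrizes the point on $\bd{\cf{\sgp}{q}}$ by an angle $\varphi$ and maximizes the resulting bound trigonometrically at $\varphi = \tfrac{\pi}{2}$, whereas you substitute the ellipsoid equation directly and reduce the inequality to $(q - y_N)^2 \geq 0$; both computations are valid.
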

		
		\begin{proof}
			If $\rch{\gmnf} = \infty$, the statement is clear, so assume $\rch{\gmnf} < \infty$.
			
			Let $\md$ be the dimension of~$\gmnf$ at~$\sgp$. If $\md = 0$, then $\dst(\gmnf, \gp) \leq \dst(\sgp, \gp) = \pep[\sgp](\gp)$.
			
			For $0 < \md < \ad$ we rely on Lemma~\ref{lemma:general-manifold-properties}. There is a planar tangent-normal coordinate system which has the origin in~$\sgp$ and contains~$\gp$. We can additionally assume that the axes are oriented so that $\gp$ is in the closed first quadrant. Since $\gp \in \bd{\cf{\sgp}{\pep[\sgp](\gp)}}$, there exists $\varphi \in \intcc{0}{\frac{\pi}{2}}$ such that the coordinates of~$\gp$ in this coordinate system are
			\[\big(\sqrt{\rch{\gmnf} q + q^2} \, \cos(\varphi), q \, \sin(\varphi)\big),\]
			where we have shortened $q \dfeq \pep[\sgp](\gp)$. Hence
			\[\dst(\gmnf, \gp) \leq \sqrt{(\rch{\gmnf} q + q^2) \cos^2(\varphi) + \big(q \, \sin(\varphi) + \rch{\gmnf}\big)^2} - \rch{\gmnf} =\]
			\[= \sqrt{\rch{\gmnf}^2 + q^2 + \rch{\gmnf} q \big(\cos^2(\varphi) + 2 \sin(\varphi)\big)} - \rch{\gmnf} =\]
			\[= \sqrt{\rch{\gmnf}^2 + q^2 + \rch{\gmnf} q \big(1 - \sin^2(\varphi) + 2 \sin(\varphi)\big)} - \rch{\gmnf} =\]
			\[= \sqrt{\rch{\gmnf}^2 + q^2 + \rch{\gmnf} q \big(2 - (1 - \sin(\varphi))^2\big)} - \rch{\gmnf}.\]
			Clearly, the last expression is the largest where the function $\intcc{0}{\frac{\pi}{2}} \to \RR$, $\varphi \mapsto 2 - (1 - \sin(\varphi))^2$ attains a maximum which is at $\varphi = \tfrac{\pi}{2}$. Thus the distance $\dst(\gmnf, \gp)$ is the largest in the normal space at~$\sgp$, where we get
			\[\dst(\gmnf, \sgp) \leq \sqrt{\rch{\gmnf}^2 + q^2 + \rch{\gmnf} q \big(2 - (1 - 1)^2\big)} - \rch{\gmnf} = \sqrt{\rch{\gmnf}^2 + q^2 + 2 \rch{\gmnf} q} - \rch{\gmnf} = q.\]
		\end{proof}
		
		Let us also recall some facts about Lipschitz maps that we will need later. A map~$f$ between subsets of Euclidean spaces is \df{Lipschitz} when it has a \df{Lipschitz coefficient} $C \in \RR_{\geq 0}$, so that for all $\gp, \sgp$ in the domain of~$f$ we have $\big\|f(\gp) - f(\sgp)\big\| \leq C \cdot \|\gp - \sgp\|$. A function is \df{locally Lipschitz} when every point of its domain has a neighbourhood such that the restriction of the function to this neighbourhood is Lipschitz.
		
		Let $f$ and~$g$ be maps with Lipschitz coefficients $C$ and~$D$, respectively. Then clearly $C+D$ is a Lipschitz coefficient for the functions $f+g$ and~$f-g$, and $C \cdot D$ is a Lipschitz coefficient for~$g \circ f$ (whenever these functions exist).
		
		For bounded functions the Lipschitz property is preserved under further operations. A function being bounded is meant in the usual way, i.e.~being bounded in norm.
		
		\begin{lemma}\label{lemma:lipschitz}
			Let $f$ and $g$ be maps between subsets of Euclidean spaces with the same domain. Assume that $f$ and $g$ are bounded and Lipschitz.
			\begin{enumerate}
				\item
					If $b$ is bilinear with the property $\big\|b(\gp, \sgp)\big\| \leq \|\gp\| \, \|\sgp\|$, then the map $\gp \mapsto b\big(f(\gp), g(\gp)\big)$ is bounded Lipschitz.\footnote{In practice, $b$ is the product of numbers or scalar product of vectors.}
				\item
					Assume $g$ takes values in~$\RR$ and has a positive lower bound $m \in \RR_{> 0}$. Then the map $x \mapsto \frac{f(\gp)}{g(\gp)}$ is bounded Lipschitz.
			\end{enumerate}
		\end{lemma}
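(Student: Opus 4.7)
The proof of both parts follows the standard ``add and subtract'' technique for establishing Lipschitz properties of products and quotients, adapted to the slightly more general bilinear setting. Let $C_f, C_g$ denote Lipschitz coefficients for $f$ and $g$, and let $M_f, M_g$ denote upper bounds on $\|f\|$ and $\|g\|$ respectively.

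For part (1), boundedness is immediate from the hypothesis on $b$: $\|b(f(\gp), g(\gp))\| \leq \|f(\gp)\|\,\|g(\gp)\| \leq M_f M_g$. For the Lipschitz estimate, I would insert the mixed term $b(f(\sgp), g(\gp))$ and split the difference using bilinearity:
\[
b(f(\gp), g(\gp)) - b(f(\sgp), g(\sgp)) = b\big(f(\gp) - f(\sgp),\, g(\gp)\big) + b\big(f(\sgp),\, g(\gp) - g(\sgp)\big).
\]
Applying the norm bound on $b$ to each summand and then invoking the Lipschitz and boundedness hypotheses yields a Lipschitz coefficient of $M_g C_f + M_f C_g$.

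For part (2), the cleanest route is to reduce to part (1) by first verifying that $\gp \mapsto 1/g(\gp)$ is itself bounded Lipschitz, so that $f/g$ can be written as a product covered by part (1) with ordinary scalar multiplication as~$b$. Boundedness of $1/g$ is clear: $|1/g(\gp)| \leq 1/m$. For the Lipschitz estimate, writing
\[
\frac{1}{g(\gp)} - \frac{1}{g(\sgp)} = \frac{g(\sgp) - g(\gp)}{g(\gp) g(\sgp)}
\]
and using the lower bound $m$ on $|g|$ gives $|1/g(\gp) - 1/g(\sgp)| \leq (C_g/m^2) \|\gp - \sgp\|$. Part (1) then produces the desired bounds on $f/g$, with Lipschitz coefficient $M_f C_g/m^2 + C_f/m$ and upper bound $M_f/m$.

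There is no genuine obstacle here; the routine is entirely mechanical. The only subtle point to flag is that the hypothesis of \emph{boundedness} of $f$ and $g$ is essential throughout (without it, $\gp \mapsto \gp^2$ on $\RR$ is the usual counterexample for products), and in part (2) the strict \emph{positive} lower bound on $g$ cannot be weakened to $g > 0$ pointwise, since $1/g$ then fails to be bounded, let alone Lipschitz.
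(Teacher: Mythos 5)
Your proposal is correct. Part~(1) is essentially the paper's own argument: the same insertion of the mixed term $b\big(f(\sgp),g(\gp)\big)$ (the paper writes the add-and-subtract inside the norm rather than via bilinearity, but the estimate is identical), with the only cosmetic difference that the paper uses a common bound $M$ and common Lipschitz coefficient $C$ for $f$ and $g$, yielding $2CM$ instead of your $M_g C_f + M_f C_g$. For part~(2) you take a mildly different route: you first show $1/g$ is bounded Lipschitz (bound $1/m$, coefficient $C_g/m^2$) and then reduce to part~(1) with scalar multiplication as the bilinear map, whereas the paper computes the difference of quotients directly over the common denominator $g(\gp')g(\gp'')$ and does the add-and-subtract in the numerator. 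The two arguments use exactly the same ingredients and give comparable constants; yours is slightly more modular (reusing part~(1)), the paper's is self-contained within item~(2). Your closing remarks on the necessity of boundedness and of the uniform positive lower bound on $g$ are accurate, though not needed for the proof itself.
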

		
		\begin{proof}
			Let $M$ be an upper bound for the norms of~$f$ and~$g$ and let $C$ be a Lipschitz coefficient for~$f$ and~$g$. Let $\gp$, $\gp'$ and $\gp''$ be elements of the domain of $f$ and~$g$.
			\begin{enumerate}
				\item
					Boundedness: $\displaystyle{\big\|b\big(f(\gp), g(\gp)\big)\big\| \leq \big\|f(\gp)\big\| \, \big\|g(\gp)\big\| \leq M^2}$.
					
					Lipschitz property:
					\begin{align*}
						&\big\|b\big(f(\gp'), g(\gp')\big) - b\big(f(\gp''), g(\gp'')\big)\big\| \\
						&\quad= \big\|b\big(f(\gp'), g(\gp')\big) - b\big(f(\gp''), g(\gp')\big) + b\big(f(\gp''), g(\gp')\big) - b\big(f(\gp''), g(\gp'')\big)\big\| \\
						&\quad\leq \big\|f(\gp') - f(\gp'')\big\| \, \big\|g(\gp')\big\| + \big\|f(\gp'')\big\| \, \big\|g(\gp') - g(\gp'')\big\| \\
						&\quad\leq 2 C M \big\|\gp' - \gp''\big\|.
					\end{align*}
				\item
					Boundedness: $\displaystyle{\Big\|\frac{f(\gp)}{g(\gp)}\Big\| = \frac{\|f(\gp)\|}{|g(\gp)|} \leq \frac{M}{m}}$.
					
					Lipschitz property:
					\begin{align*}
						\Big\|\frac{f(\gp')}{g(\gp')} - \frac{f(\gp'')}{g(\gp'')}\Big\| &= \Big\|\frac{f(\gp') g(\gp'') - f(\gp'') g(\gp')}{g(\gp') g(\gp'')}\Big\| \\
						&= \frac{\|f(\gp') g(\gp'') - f(\gp'') g(\gp'') + f(\gp'') g(\gp'') - f(\gp'') g(\gp')\|}{|g(\gp') g(\gp'')|} \\
						&\leq \frac{\|f(\gp') - f(\gp'')\| \, \|g(\gp'')\| + \|f(\gp'')\| \, \|g(\gp'') - g(\gp')\|}{|g(\gp')| \, |g(\gp'')|} \\
						&\leq \frac{2 C M}{m^2} \big\|\gp' - \gp''\big\|.
					\end{align*}
			\end{enumerate}
		\end{proof}
		
		\begin{corollary}\label{corollary:partition-of-unity-locally-lipschitz-amalgamation-is-locally-lipschitz}
			Let $(U_i)_{i \in I}$ be a locally finite open cover of a subset~$U$ of a Euclidean space, $(f_i)_{i \in I}$ a subordinate smooth partition of unity and $(g_i\colon U_i \to \RR^\ad)_{i \in I}$ a family of maps. Let $g\colon U \to \RR^\ad$ be the map, obtained by gluing maps~$g_i$ with the partition of unity~$f_i$, i.e.
			\[g(x) \dfeq \sum_{i \in I} f_i(x) \;\! g_i(x).\]
			Then if all~$g_i$ are locally Lipschitz, so is~$g$.
		\end{corollary}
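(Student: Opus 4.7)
The plan is to show that for every $x \in U$, there exists an open neighborhood of $x$ on which $g$ is (globally) Lipschitz. Two observations drive the proof: local finiteness will reduce the possibly infinite sum defining $g$ to a finite sum near $x$, and then Lemma~\ref{lemma:lipschitz}(1), applied with the bilinear scalar multiplication $(a, v) \mapsto av$, will handle each term $f_i g_i$. Summing finitely many Lipschitz functions preserves the Lipschitz property, so that finishes the job.

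In more detail, I would proceed in the following steps. First, fix $x \in U$ and use local finiteness to pick an open neighborhood $W_0$ of $x$ in $U$ meeting only finitely many of the sets $U_i$, say $U_{i_1}, \ldots, U_{i_k}$; because the partition of unity is subordinate (so $\supp(f_i) \subseteq U_i$), every other $f_i$ vanishes on $W_0$. Second, discard those indices $i_j$ with $x \notin U_{i_j}$: for such $j$ we have $x \notin \supp(f_{i_j})$, so $f_{i_j}$ vanishes on some neighborhood of $x$, and shrinking $W_0$ removes their contribution. Let $J$ denote the remaining index set; thus on the (shrunk) neighborhood $W_0$ we have $g(y) = \sum_{j \in J} f_{i_j}(y)\, g_{i_j}(y)$ with $x \in U_{i_j}$ for every $j \in J$.

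Third, shrink $W_0$ further to a bounded open neighborhood $W$ of $x$ whose closure $\overline{W}$ is compact and contained in $\bigcap_{j \in J} U_{i_j}$, and small enough that each $g_{i_j}$ is Lipschitz on $\overline{W}$ (possible by the local Lipschitz assumption together with compactness: covering $\overline{W}$ by finitely many neighborhoods on which $g_{i_j}$ is Lipschitz and passing to a Lebesgue-number argument yields a single Lipschitz constant on $\overline{W}$). On this $\overline{W}$ each $f_{i_j}$ is smooth on an open neighborhood of the compact set $\overline{W}$, hence has a bounded derivative there and is therefore Lipschitz; it is also bounded since $f_{i_j}$ takes values in $[0,1]$. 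Likewise $g_{i_j}$ is bounded on $\overline{W}$ because a Lipschitz map on a bounded set has bounded image.

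Fourth, apply Lemma~\ref{lemma:lipschitz}(1) with $b$ the scalar multiplication $\RR \times \RR^{\ad} \to \RR^{\ad}$, which is bilinear and satisfies $\|b(a,v)\| = |a|\,\|v\|$, to conclude that each product $f_{i_j} g_{i_j}$ is bounded and Lipschitz on $W$. Finally, since $J$ is finite, the finite sum $g|_W = \sum_{j \in J} f_{i_j} g_{i_j}$ is Lipschitz on $W$, as the sum of two Lipschitz functions is Lipschitz (with Lipschitz constants adding). Thus $g$ is locally Lipschitz at $x$, and since $x$ was arbitrary, on all of $U$. The argument is essentially bookkeeping; the only mild subtlety is the second step, where one must invoke the support condition of the subordinate partition of unity to justify ignoring indices $i_j$ with $x \notin U_{i_j}$, for which $g_{i_j}$ is not even defined at $x$.
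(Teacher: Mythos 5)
Your proof is correct and follows essentially the same route as the paper: localize using local finiteness and the supports of the partition of unity, reduce $g$ to a finite sum near~$x$, apply Lemma~\ref{lemma:lipschitz}(1) with scalar multiplication to each product $f_i \, g_i$, and add up finitely many Lipschitz maps. Your second step (discarding the indices $i$ with $x \notin U_i$ via $\supp{f_i} \subseteq U_i$) is in fact spelled out more carefully than in the paper's own proof, which simply posits a neighbourhood contained in all the $U_i$ it meets.

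One caveat concerns your third step, where you require a neighbourhood $W$ of~$x$ with $\overline{W}$ compact and $\overline{W} \subseteq \bigcap_{j \in J} U_{i_j}$. The corollary only assumes that $U$ is a subset of a Euclidean space, so $U$ need not be locally compact and such a $W$ need not exist (take $U$ to consist of rational points, say). This is harmless in the paper's application, where $U = \on$ is open in~$\RR^\ad$, and in any case the compactness/Lebesgue-number detour is unnecessary: local Lipschitzness of each $g_{i_j}$ directly provides a neighbourhood of~$x$ on which it is Lipschitz, a Lipschitz map is automatically bounded on a bounded such neighbourhood, and each $f_{i_j}$ is bounded by~$1$ and Lipschitz near~$x$ because its derivative is continuous, hence locally bounded. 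Intersecting the finitely many neighbourhoods so obtained yields the set on which Lemma~\ref{lemma:lipschitz} applies, which is exactly the paper's argument; with that simplification your proof coincides with it.
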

		
		\begin{proof}
			Every continuous map is locally bounded, including the derivative of a smooth map, the bound on which is then a local Lipschitz coefficient for the map. We can apply this for~$f_i$.
			
			Given $x \in U$, pick an open set $V \subseteq U$, for which the following holds: $x \in V$, there is a finite set of indices $F \subseteq I$ such that $V$ intersects only~$U_i$ with $i \in F$ and $V \subseteq \bigcap_{i \in F} U_i$, and the maps $f_i$ and~$g_i$ are bounded and Lipschitz on~$V$ for every $i \in F$. Then $\rstr{g}_V = \sum_{i \in F} \rstr{f_i}_V \;\! \rstr{g_i}_V$ which is Lipschitz on~$V$ by Lemma~\ref{lemma:lipschitz}.
		\end{proof}

	\section{Calculating Bounds on Persistence Parameter}\label{section:bounds-on-persistence-parameter}
		
		Having derived some results for more general manifolds, we now specify the manifolds for which our main theorem holds. We reserve the symbol~$\mnf$ for such a manifold.
		
		Let $\mnf$ be a non-empty $\md$-dimensional properly embedded $\C{1}$-submanifold of~$\RR^\ad$ without boundary, and let~$\ma$ be its medial axis. Let $\rch$ denote the reach of~$\mnf$. In this section we assume $\rch < \infty$ and in Sections~\ref{section:program} and~\ref{section:deformation-retraction-construction} we assume $\rch = 1$. We will drop these assumptions on~$\rch$ for the main theorem in Section~\ref{section:main-result}.
		
		By Proposition~\ref{proposition:properly-embedded-submanifolds} and the definition of a medial axis the map $\pr\colon \RR^\ad \setminus \ma \to \mnf$, which takes a point to its closest point on the manifold~$\mnf$, is well defined. We also define $\prv\colon \RR^\ad \setminus \ma \to \mnf$, $\prv(\gp) \dfeq \pr(\gp) - \gp$. We view~$\prv(\gp)$ as the vector, starting at~$\gp$ and ending in~$\pr(\gp)$. This vector is necessarily normal to the manifold, i.e.~it lies in~$\ns{\pr(\gp)}$. By the definition of the reach, the maps $\pr$ and~$\prv$ are defined on~$\otnm{\rch}$.
		
		\begin{lemma}\label{lemma:continuity-of-closest-point-projection}
			For every $r \in \intoo{0}{\rch}$ the maps $\pr$ and~$\prv$ are Lipschitz when restricted to $\ctnm{r}$, with Lipschitz coefficients $\frac{\rch}{\rch - r}$ and $\frac{\rch}{\rch - r} + 1$, respectively. Hence these two maps are continuous on~$\otnm{\rch}$.
		\end{lemma}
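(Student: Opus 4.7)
The plan is to use the standard \emph{ball condition} that characterizes the reach: whenever $\gp \in \otnm{\rch}$ with $\sgp = \pr(\gp)$ and $d = \|\gp - \sgp\| < \rch$, the vector $\vec{v} = \gp - \sgp$ is normal to $\mnf$ at $\sgp$, and the open associated $\rch$-ball with center $\sgp + \rch\hat{n}$ (where $\hat{n} = \vec{v}/d$) contains no other point of $\mnf$. Hence for every $\sgp' \in \mnf$,
\[\|\sgp' - \sgp - \rch\hat{n}\|^2 \geq \rch^2,\]
which after expansion rearranges to the crucial inequality
\[\langle \sgp' - \sgp, \hat{n}\rangle \leq \frac{\|\sgp' - \sgp\|^2}{2\rch}.\]

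Given two points $\gp_1, \gp_2 \in \ctnm{r}$, I would set $\sgp_i \dfeq \pr(\gp_i)$, $d_i \dfeq \|\gp_i - \sgp_i\| \leq r$, and $\hat{n}_i \dfeq (\gp_i - \sgp_i)/d_i$ whenever $d_i > 0$ (the cases $d_i = 0$ being handled by choosing $\hat{n}_i$ arbitrarily since the corresponding term vanishes). Applying the displayed inequality with $(\sgp, \hat{n}) = (\sgp_1, \hat{n}_1)$ and $\sgp' = \sgp_2$, and symmetrically with the roles exchanged, gives
\[\langle \sgp_2 - \sgp_1, \hat{n}_1\rangle \leq \frac{\|\sgp_2 - \sgp_1\|^2}{2\rch}, \qquad \langle \sgp_1 - \sgp_2, \hat{n}_2\rangle \leq \frac{\|\sgp_2 - \sgp_1\|^2}{2\rch}.\]
Next I would expand
\[\langle \gp_2 - \gp_1, \sgp_2 - \sgp_1\rangle = \|\sgp_2 - \sgp_1\|^2 + d_2\langle \hat{n}_2, \sgp_2 - \sgp_1\rangle - d_1\langle \hat{n}_1, \sgp_2 - \sgp_1\rangle\]
and use the two inequalities above on the cross terms together with $d_1, d_2 \leq r$ to conclude
\[\langle \gp_2 - \gp_1, \sgp_2 - \sgp_1\rangle \geq \|\sgp_2 - \sgp_1\|^2 \left(1 - \frac{d_1 + d_2}{2\rch}\right) \geq \|\sgp_2 - \sgp_1\|^2 \cdot \frac{\rch - r}{\rch}.\]
Cauchy--Schwarz on the left-hand side then yields $\|\sgp_2 - \sgp_1\| \leq \frac{\rch}{\rch - r}\|\gp_2 - \gp_1\|$, proving the Lipschitz bound for $\pr$. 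The bound for $\prv$ follows immediately from the triangle inequality $\|\prv(\gp_2) - \prv(\gp_1)\| \leq \|\pr(\gp_2) - \pr(\gp_1)\| + \|\gp_2 - \gp_1\|$.

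For continuity on the whole open offset $\otnm{\rch}$, I would note that each point of $\otnm{\rch}$ has a neighbourhood contained in some $\ctnm{r}$ with $r < \rch$ (take $r$ to be any number strictly between $\dst(\gp, \mnf)$ and $\rch$), so both maps are locally Lipschitz, hence continuous. The main obstacle is really just extracting the right algebraic combination: recognizing that one should pair $\gp_2 - \gp_1$ with $\sgp_2 - \sgp_1$ (not, say, bound $\|\sgp_2 - \sgp_1\|$ directly) and then apply the ball condition at \emph{both} projections to handle the two cross terms symmetrically. Once this is set up, everything else is routine.
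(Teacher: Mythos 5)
Your argument is correct, but it is not the route the paper takes: the paper does not prove the Lipschitz bound at all, it simply cites \cite[Proposition~2]{Chazal2017} and \cite[Theorem~4.8(8)]{federer1959curvature} for the statement that $\pr$ is $\frac{\rch}{\rch-r}$-Lipschitz on $\ctnm{r}$, and then obtains the coefficient for $\prv = \pr - \mathrm{id}$ and the continuity on $\otnm{\rch} = \bigcup_{r<\rch}\otnm{r}$ exactly as you do. What you have written is essentially a self-contained reproduction of Federer's argument: the inequality $\spr{\sgp'-\sgp}{\hat{n}} \leq \frac{\|\sgp'-\sgp\|^2}{2\rch}$ is Federer's characterization of reach via normal vectors, and applying it symmetrically at both projection points, expanding $\spr{\gp_2-\gp_1}{\sgp_2-\sgp_1}$, and finishing with Cauchy--Schwarz is precisely how the cited constant $\frac{\rch}{\rch-r}$ arises (your handling of the degenerate cases $d_i=0$ and $\sgp_1=\sgp_2$ is fine, and $1-\frac{d_1+d_2}{2\rch}\geq\frac{\rch-r}{\rch}>0$ is what makes the division legitimate). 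The trade-off is the obvious one: the citation keeps the paper short, while your derivation makes the source of the constant transparent and the lemma independent of the references --- though note that you still rely on two facts the paper asserts without proof (that $\gp-\pr(\gp)$ is normal to $\mnf$ at $\pr(\gp)$, and that $\mnf$ is disjoint from the interiors of its associated $\rch$-balls), so your proof is self-contained only modulo those standard consequences of Federer's theory.
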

		
		\begin{proof}
			The map $\pr$ is Lipschitz on~$\ctnm{r}$ by~\cite[Proposition~2]{Chazal2017} with a Lipschitz coefficient~$\frac{\rch}{\rch - r}$~\cite[Theorem~4.8(8)]{federer1959curvature}. As a difference of two Lipschitz maps, the map~$\prv$ is Lipschitz as well, with a Lipschitz coefficient~$\frac{\rch}{\rch - r} + 1$. The maps $\pr$ and~$\prv$ are therefore continuous on $\otnm{r}$ for all $r \in \intoo{0}{\rch}$, and hence also on the union ${\otnm{\rch} = \bigcup_{r \in \intoo{0}{\rch}} \otnm{r}}$.
		\end{proof}
		
		We want to approximate the manifold~$\mnf$ with a sample. We assume that the sample set~$\smp$ is a non-empty discrete subset of~$\mnf$, locally finite in~$\RR^\ad$ (meaning, every point in~$\RR^\ad$ has a neighbourhood which intersects only finitely many points of~$\smp$). It follows that $\smp$ is a closed subset of~$\RR^\ad$.
		
		Let $\hd$ denote the Hausdorff distance between $\mnf$ and~$\smp$. We assume that $\hd$ is finite. This value represents the density of our sample: it means that every point on the manifold~$\mnf$ has a point in the sample~$\smp$ which is at most $\hd$ away.
		
		Since $\mnf$ is properly embedded in~$\RR^\ad$ and $\hd < \infty$, the sample~$\smp$ is finite if and only if $\mnf$ is compact. A properly embedded non-compact submanifold without boundary needs to extend to infinity and so cannot be sampled with finitely many points (think for example about the hyperbola in the plane, $x^2 - y^2 = 1$). As it turns out, we do not need finiteness, only local finiteness, to prove our results.
		
		If the sample is dense enough in the manifold, it should be a good approximation to it. Specifically, we want to recover at least the homotopy type of~$\mnf$ from the information, gathered from~$\smp$. A common way to do this is to enlarge the sample points to balls, the union of which deformation retracts to the manifold, so has the same homotopy type (in other words, we consider a \v{C}ech complex of the sample).
		
		In this paper we use ellipsoids instead of balls. The idea is that a tangent space at some point is a good approximation for the manifold at that point, so an ellipsoid with the major semi-axes in the tangent directions should better approximate the manifold than a ball. Consequently we should require a less dense sample for the approximation. This idea indeed pans out (as demonstrated by Theorem~\ref{section:main-result}), though it turns out that the standard methods, used to construct the deformation retraction from the union of balls to the manifold, do not work for the ellipsoids.
		
		Given a persistence parameter $\pp \in \RR_{> 0}$, let us denote the unions of open and closed tangent-normal $\pp$-ellipsoids around sample points by
		\begin{align*}
			\on &\dfeq \bigcup_{\smpp \in \smp} \of, \\
			\cn &\dfeq \bigcup_{\smpp \in \smp} \cf.
		\end{align*}
		As a union of open sets, $\on$ is open in~$\RR^\ad$. As a locally finite union of closed sets, $\cn$ is closed in~$\RR^\ad$.
		
		We want a deformation retraction from $\on$ to~$\mnf$. Clearly this will not work for all $\pp \in \RR_{> 0}$. If $\pp$ is too small, $\on$ covers only some blobs around sample points, not the whole~$\mnf$. If $\pp$ is too large, $\on$ reaches over the medial axis~$\ma$, therefore creates connections which do not exist in the manifold, so differs from it in the homotopy type. This suggests that the lower bound on~$\pp$ will be expressed in terms of~$\hd$ (the denser the sample, the smaller the required~$\pp$ for $\on$ to cover~$\mnf$), and the upper bound on~$\pp$ will be expressed in terms of~$\rch$ (the further away the medial axis, the larger we can make the ellipsoids so that they still do not intersect the medial axis).
		
		\begin{lemma}\label{lemma:persistence-parameter-lower-bound}
			\
			\begin{enumerate}
				\item
					Assume $\pp \in \RR_{> 0}$ satisfies $\hd < \sqrt{2\pp \big(\sqrt{\rch (\pp + 2\rch)} - \rch\big)}$. Then $\mnf \subseteq \on$, i.e.~$\big(\of\big)_{\smpp \in \smp}$ is an open cover of~$\mnf$.
				\item
					The map $\RR_{> 0} \to \RR_{> 0}$, $\pp \mapsto \sqrt{2\pp \big(\sqrt{\rch (\pp + 2\rch)} - \rch\big)}$, is strictly increasing. Thus there exists a unique $\lppb \in \RR_{> 0}$ such that
					\[\hd < \sqrt{2\pp \big(\sqrt{\rch (\pp + 2\rch)} - \rch\big)} \iff \lppb < \pp.\]
			\end{enumerate}
		\end{lemma}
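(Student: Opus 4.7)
The plan is to prove part (1) by showing directly that every point of the manifold lies in some open tangent-normal ellipsoid around a sample point. Given $\gp \in \mnf$, the Hausdorff distance hypothesis (together with the sample being locally finite, hence closed in $\RR^\ad$, so that distances to $\smp$ are attained by Proposition~\ref{proposition:properly-embedded-submanifolds}) yields a sample point $\smpp \in \smp$ with $\dst(\gp, \smpp) \leq \hd$. In the generic situation $0 < \md < \ad$, I apply Lemma~\ref{lemma:general-manifold-properties}(\ref{lemma:general-manifold-properties:planar-tangent-normal-coordinate-system}) to place a planar tangent-normal coordinate system at $\smpp$ containing $\gp$, writing $\gp = (y_T, y_N)$ with $y_T, y_N \geq 0$ and $s := y_T^2 + y_N^2 = \dst(\smpp, \gp)^2 \leq \hd^2$. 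The edge case $\md = 0$ is handled separately, using the fact that distinct points of a $0$-dimensional manifold of reach $\rch$ are at distance at least $2\rch$; the case $\md = \ad$ cannot occur since $\rch < \infty$.

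Next, the reach of $\mnf$ enters through the associated $\rch$-ball at $\smpp$ centered at $(0, \rch)$ in this plane: since $\gp \in \mnf$ is disjoint from the interior of this ball, $y_T^2 + (y_N - \rch)^2 \geq \rch^2$, which rearranges to $y_N \leq \tfrac{s}{2\rch}$. The membership condition $\gp \in \of$ reads $\tfrac{y_T^2}{\rch\pp + \pp^2} + \tfrac{y_N^2}{\pp^2} < 1$; substituting $y_T^2 = s - y_N^2$ rewrites the left-hand side as $\tfrac{s}{\pp(\rch + \pp)} + \tfrac{y_N^2 \rch}{\pp^2(\rch + \pp)}$, which is monotone increasing in $y_N^2$ for fixed $s$, so the worst case under the reach constraint is $y_N^2 = \tfrac{s^2}{4\rch^2}$. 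Clearing denominators, the ellipsoid condition is equivalent to $s^2 + 4\rch\pp s - 4\rch\pp^2(\rch + \pp) < 0$, whose positive root (by the quadratic formula) is exactly $2\pp\bigl(\sqrt{\rch(\pp + 2\rch)} - \rch\bigr)$. Thus $s \leq \hd^2 < 2\pp\bigl(\sqrt{\rch(\pp + 2\rch)} - \rch\bigr)$ yields $\gp \in \of$, completing part (1).

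For part (2), I factor the function $\pp \mapsto 2\pp\bigl(\sqrt{\rch(\pp + 2\rch)} - \rch\bigr)$ as a product of two functions that are positive and strictly increasing on $\RR_{>0}$; the second factor is positive because $\rch(\pp + 2\rch) > \rch^2$ for $\pp > 0$. A product of positive strictly increasing functions is strictly increasing, and taking the square root preserves this. Since the resulting function is continuous, vanishes as $\pp \to 0^+$, and tends to $\infty$ as $\pp \to \infty$, the intermediate value theorem supplies a unique $\lppb \in \RR_{>0}$ with $\hd = \sqrt{2\lppb\bigl(\sqrt{\rch(\lppb + 2\rch)} - \rch\bigr)}$, and the claimed equivalence follows. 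The main obstacle is bookkeeping the algebra in part (1) — conceptually the proof is a clean one-parameter optimization of the ellipsoid potential over the region cut out by the Hausdorff and reach constraints, but recovering the \emph{exact} bound $2\pp(\sqrt{\rch(\pp + 2\rch)} - \rch)$ requires careful manipulation of the quadratic inequality so that the hypothesis threshold is matched precisely and not merely up to a constant.
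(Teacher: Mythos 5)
Your proposal is correct and recovers exactly the paper's threshold, but the execution of the main case differs enough to be worth comparing. For $0 < \md < \ad$ the paper also works in a planar tangent-normal coordinate system at the nearest sample point, but it then argues via a covering statement: it computes the intersection points of the ellipse boundary with the two associated $\rch$-circles, observes that their norm is $r = \sqrt{2\pp\big(\sqrt{\rch(\pp+2\rch)}-\rch\big)}$, and deduces $\gp \in \of$ from the inclusion of $\ob{\smpp}{r}$ in the union of $\of$ with the two open associated balls (supported by a figure and a ``routine calculation''), together with the fact that $\mnf$ avoids the open associated balls. You use the same geometric inputs (planar reduction, associated-ball constraint, $\dst(\gp,\smpp)\leq\hd$) but turn the reach constraint into the inequality $y_N \leq s/(2\rch)$ and then maximize the ellipsoid quadratic form at fixed $s=\dst(\gp,\smpp)^2$, reducing everything to the quadratic $s^2 + 4\rch\pp s - 4\rch\pp^2(\rch+\pp) < 0$, whose positive root is the same threshold; this buys a self-contained algebraic verification in place of the paper's covering claim. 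In part (2) the paper differentiates the function and quotes a closed-form expression for $\lppb$ from \textit{Mathematica}, whereas your factorization into positive strictly increasing factors plus the intermediate value theorem is more elementary (both your argument and the paper's implicitly assume $\hd > 0$). Two small remarks: your dismissal of $\md=\ad$ via $\rch<\infty$ is legitimate under the section's standing assumption, since a non-empty properly embedded $\ad$-dimensional submanifold without boundary is all of $\RR^\ad$ and has infinite reach; and in the $\md=0$ case the spacing fact you cite still needs the short case split the paper makes (if $\pp\leq 2\rch$ the bound forces $\gp=\smpp$, while if $\pp>2\rch$ the bound is below $\pp$), which your sketch gestures at but does not spell out --- a minor omission, not a gap in the main argument.
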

		
		\begin{proof}
			\
			\begin{enumerate}
				\item
					Take any $\gp \in \mnf$. By assumption there exists $\smpp \in \smp$ such that $\dst(\gp, \smpp) \leq \hd$. We claim that $\gp \in \of$.
					
					If $\md = \ad$, then  $\of = \ob{\smpp}{\sqrt{\rch \pp + \pp^2}}$, and a quick calculation shows that
					\[\sqrt{2\pp \big(\sqrt{\rch (\pp + 2\rch)} - \rch\big)} \leq \sqrt{\rch \pp + \pp^2},\]
					so $\gp \in \of$.
					
					If $\md = 0$, then $\of = \ob{\smpp}{\pp}$ and the reach~$\rch$ is half of the distance between the two closest distinct points in~$\mnf$ (since we are assuming $\rch < \infty$ and therefore $\ma \neq \emptyset$, the manifold~$\mnf$ must have at least two points). If $\pp \leq 2\rch$, then
					\[\sqrt{2\pp \big(\sqrt{\rch (\pp + 2\rch)} - \rch\big)} \leq 2\rch,\]
					so necessarily $\gp = \smpp \in \of$. If $\pp > 2\rch$, then
					\[\sqrt{2\pp \big(\sqrt{\rch (\pp + 2\rch)} - \rch\big)} < \pp,\]
					so $\gp \in \ob{\smpp}{\pp} = \of$.
					
					Assume hereafter that $0 < \md < \ad$. Choose a planar tangent-normal coordinate system with the origin in~$\smpp$ which contains~$\gp$ (use Lemma~\ref{lemma:general-manifold-properties}(\ref{lemma:general-manifold-properties:planar-tangent-normal-coordinate-system})). In this coordinate system the boundary of~$\cf$ is given by the equation $\frac{x^2}{\rch \pp + \pp^2} + \frac{y^2}{\pp^2} = 1$. A routine calculation shows that it intersects the boundaries of the $\tau$-balls, associated to~$\smpp$ (with centers in $\cab' = (0, \rch)$ and $\cab'' = (0, -\rch)$), given by the equations $x^2 + (y \pm \rch)^2 = \rch^2$, in the points
					\[\Big(\pm\sqrt{\frac{\pp (\pp + \rch) \big(2 \sqrt{\rch (\pp + 2\rch)} - \pp - \rch\big)}{\rch}}, \pm\frac{\pp \big(\sqrt{\rch (\pp + 2\rch)} - \rch\big)}{\rch}\Big),\]
					the norm of which is~$r \dfeq \sqrt{2\pp \big(\sqrt{\rch (\pp + 2\rch)} - \rch\big)} > \hd \geq \dst(\gp, \smpp)$. It follows that within the given two-dimensional coordinate system
					\[\gp \in \ob{\smpp}{r} \subseteq \of \cup \ob{\cab'}{\rch} \cup \ob{\cab''}{\rch},\]
					see Figure~\ref{figure:covering-manifold-with-ellipsoids}.
					\begin{figure}[!ht]
						\centering
						\includegraphics[width=0.7\textwidth]{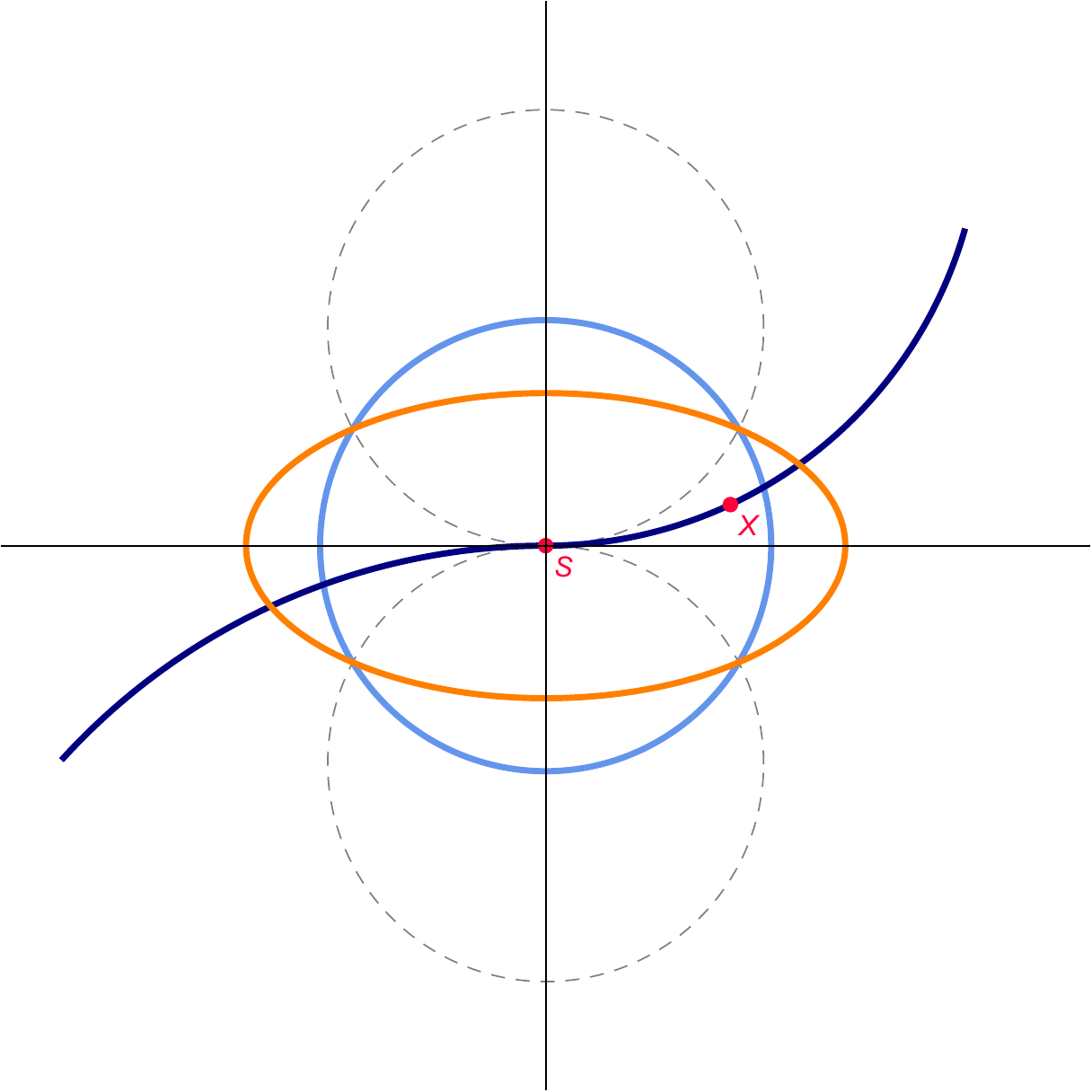}
						\caption{The point~$X$ within the ellipsoid.}\label{figure:covering-manifold-with-ellipsoids}
					\end{figure}
					
					Since $\smpp \in \mnf$ and the reach of~$\mnf$ is~$\rch$, the manifold~$\mnf$ does not intersect the open $\tau$-balls, associated to~$\smpp$, so $\gp \in \of$.
				\item
					The derivative of the given function is
					\[\frac{\rch \big(3\pp + 4\rch - 2\sqrt{\rch (\pp + 2\tau)}\big)}{2 \sqrt{2 \pp \rch (\pp + 2\rch) \big(\sqrt{\rch (\pp + 2\rch)} - \rch\big)}},\]
					which is positive for $\pp, \rch > 0$ which assures the existence of the required~$\lppb$. Calculated with \textit{Mathematica}, the actual value is
					\[\lppb = \frac{2 \tau  \left(3 \kappa ^2+\tau ^2\right)}{3 \sqrt[3]{27 \kappa ^4 \tau ^2-36 \kappa
					   ^2 \tau ^4+3 \sqrt{81 \kappa ^8 \tau ^4-408 \kappa ^6 \tau ^6-96 \kappa ^4 \tau ^8}-8
					   \tau ^6}}+\]
					\[\frac{\sqrt[3]{27 \kappa ^4 \tau ^2-36 \kappa ^2 \tau ^4+3 \sqrt{81 \kappa
					   ^8 \tau ^4-408 \kappa ^6 \tau ^6-96 \kappa ^4 \tau ^8}-8 \tau ^6}}{6 \tau }-\frac{\tau
					   }{3}.\]
			\end{enumerate}
		\end{proof}

		We can strengthen this result to thickenings of~$\mnf$. Given $r \in \RR_{> 0}$, we denote the open and closed $r$-thickening of~$\mnf$ by
		\begin{align*}
			\otnm{r} &\dfeq \set{\gp \in \RR^\ad}{\dst(\mnf, \gp) < r}, \\
			\ctnm{r} &\dfeq \set{\gp \in \RR^\ad}{\dst(\mnf, \gp) \leq r}.
		\end{align*}
		
		\begin{corollary}
			For every $r \in \RR_{\geq 0}$ and every $\pp \in \RR_{> \lppb + r}$ we have $\ctnm{r} \subseteq \on$.
		\end{corollary}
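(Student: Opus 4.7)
The plan is to invoke Lemma~\ref{lemma:persistence-parameter-lower-bound}(1) at the reduced persistence parameter $\pp - r$ so that $\mnf$ is already covered by $(\pp-r)$-ellipsoids, and then bridge out to $\ctnm{r}$ by showing that the Minkowski sum of the closed $(\pp-r)$-ellipsoid with a closed $r$-ball sits inside the closed $\pp$-ellipsoid at the same centre.

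Concretely, fix $\gp \in \ctnm{r}$. Since $\mnf$ is closed, Proposition~\ref{proposition:properly-embedded-submanifolds} supplies $\sgp \in \mnf$ with $\|\gp - \sgp\| = \dst(\gp, \mnf) \leq r$. The hypothesis $\pp > \lppb + r$ reads as $\pp - r > \lppb$, so Lemma~\ref{lemma:persistence-parameter-lower-bound}(1) applied at persistence $\pp - r$ yields $\mnf \subseteq \on{\pp-r}$, and in particular some $\smpp \in \smp$ with $\sgp \in \of{\smpp}{\pp-r}$. The whole argument therefore reduces to the following geometric claim: \emph{if $\sgp \in \of{\smpp}{\pp-r}$ and $\|\gp - \sgp\| \leq r$, then $\gp \in \of{\smpp}{\pp}$}.

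To prove this claim, I use support functions in a tangent-normal coordinate system at $\smpp$. Writing $u = (u_T, u_N) \in \RR^\ad$ with tangent and normal components, the closed $q$-ellipsoid $\cf{\smpp}{q}$ has support function $u \mapsto \sqrt{(\rch q + q^2)\|u_T\|^2 + q^2\|u_N\|^2}$ and the closed $r$-ball has support function $u \mapsto r\|u\|$. The containment of Minkowski sums $\cf{\smpp}{\pp-r} + \cb{\pt{0}}{r} \subseteq \cf{\smpp}{\pp}$ then amounts to
\[\sqrt{(\rch(\pp-r)+(\pp-r)^2)\|u_T\|^2 + (\pp-r)^2\|u_N\|^2} + r\|u\| \leq \sqrt{(\rch\pp+\pp^2)\|u_T\|^2 + \pp^2\|u_N\|^2}\]
for all $u$, and two successive squarings collapse this to the trivial inequality $\rch^2 \|u_T\|^4 \geq 0$. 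Passing from the closed inclusion to the open one is routine: $\sgp \in \of{\smpp}{\pp-r}$ admits an open neighbourhood inside $\cf{\smpp}{\pp-r}$, and translating that neighbourhood by $\gp - \sgp$ exhibits $\gp$ in the interior of the Minkowski sum, hence inside $\of{\smpp}{\pp}$.

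The main obstacle is purely computational: the support-function inequality. It is algebraically elementary, with equality occurring only in purely normal directions $u_T = 0$; geometrically this records that the normal semi-axis grows from $\pp - r$ to $\pp$ by exactly $r$, whereas the tangent semi-axis grows strictly more than $r$ (the strictness uses $\rch > 0$). The only care required is in tracking the cross terms through the two squarings so that everything collapses neatly to $\rch^2\|u_T\|^4 \geq 0$.
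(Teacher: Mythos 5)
Your proposal is correct and follows essentially the same route as the paper: apply Lemma~\ref{lemma:persistence-parameter-lower-bound} at parameter $\pp-r$ to get $\mnf \subseteq \on{\pp-r}$, then observe that the $r$-thickening of $\of{\smpp}{\pp-r}$ lies inside $\of{\smpp}{\pp}$. The only difference is that the paper asserts this last containment without proof, whereas you verify it explicitly via support functions (and your computation, collapsing to $\rch^2\|u_T\|^4 \geq 0$, is correct), so your write-up is simply a more detailed version of the same argument.
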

		
		\begin{proof}
			Lemma~\ref{lemma:persistence-parameter-lower-bound} implies that $\mnf \subseteq \on{\pp-r}$. Hence $\ctnm{r}$ is contained in the union of $r$-thickenings of open ellipsoids~$\of{\smpp}{\pp-r}$, and an $r$-thickening of~$\of{\smpp}{\pp-r}$ is contained in~$\of$.
		\end{proof}
		
		Let us now also get an upper bound on~$\pp$.
		
		\begin{lemma}
			Assume $\pp \in \intoo{0}{\rch}$. Then $\cn \subseteq \otnm{\rch}$; in particular $\on$ and $\cn$ do not intersect the medial axis of~$\mnf$.
		\end{lemma}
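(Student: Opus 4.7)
The plan is to reduce the claim to the bound $\dst(\mnf,\gp) \leq \pep[\smpp](\gp)$ already proven in Lemma~\ref{lemma:distance-to-manifold}. Concretely, I would take an arbitrary $\gp \in \cn$, choose a sample point $\smpp \in \smp$ with $\gp \in \cf$, and use the fact that membership of $\gp$ in the closed tangent-normal $\pp$-ellipsoid at~$\smpp$ translates directly, by the defining equation of $\pep[\smpp]$ in Definition~\ref{definition:depth-parameter}, into the inequality $\pep[\smpp](\gp) \leq \pp$. (The cases $\md=\ad$ and $\md=0$ are consistent with this: in the former $\rch=\infty$ and the section's running assumption $\rch<\infty$ excludes it, and in the latter $\pep[\smpp](\gp)=\dst(\gp,\smpp)$ and $\cf$ reduces to a ball of radius~$\pp$ around~$\smpp$.)

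Combining this with Lemma~\ref{lemma:distance-to-manifold} would give
\[\dst(\mnf,\gp) \leq \pep[\smpp](\gp) \leq \pp < \rch,\]
where the last strict inequality uses the hypothesis $\pp \in \intoo{0}{\rch}$. Hence $\gp \in \otnm{\rch}$, which is the first assertion $\cn \subseteq \otnm{\rch}$.

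For the ``in particular'' clause I would invoke the definitions directly: by definition of the reach, every point of the medial axis~$\ma$ has distance at least $\rch$ from~$\mnf$, so $\ma \cap \otnm{\rch} = \emptyset$. Since $\on \subseteq \cn \subseteq \otnm{\rch}$, both $\on$ and $\cn$ are disjoint from~$\ma$.

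The only place where anything could go wrong is verifying that $\gp \in \cf$ really does imply $\pep[\smpp](\gp) \leq \pp$ uniformly in the dimension cases, but this is an immediate consequence of the monotonicity of the ellipsoid family $\big(\cf{\smpp}{r}\big)_{r>0}$ in~$r$ together with Lemma~\ref{lemma:inside-ellipsoid}: $\gp$ lies on the boundary of exactly the ellipsoid of ``radius'' $\pep[\smpp](\gp)$, so $\gp \in \cf{\smpp}{\pp}$ forces $\pep[\smpp](\gp) \leq \pp$. So there is no real obstacle; the lemma is essentially a one-line consequence of the preceding preparation.
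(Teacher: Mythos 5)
Your argument is exactly the paper's: take $\gp \in \cf$ for some $\smpp \in \smp$, note $\pep[\smpp](\gp) \leq \pp$ by the defining equation of~$\pep[\smpp]$ (monotonicity in the parameter), and apply Lemma~\ref{lemma:distance-to-manifold} to get $\dst(\mnf,\gp) \leq \pep[\smpp](\gp) \leq \pp < \rch$, the medial-axis statement being immediate from the definition of reach. Your write-up just makes explicit the monotonicity step and the ``in particular'' clause that the paper leaves implicit; it is correct.
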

		
		\begin{proof}
			Take any $\smpp \in \smp$ and $\gp \in \cf$. By Lemma~\ref{lemma:distance-to-manifold} we have $\dst(\mnf, \gp) \leq \pep(\gp) \leq \pp < \rch$.
		\end{proof}
		
		The results in this section give the theoretical bounds on the persistence parameter~$\pp$, within which we look for a deformation retraction from~$\on$ to~$\mnf$, which we summarize in the following corollary.
		
		\begin{corollary}\label{corollary:persistence-parameter-bounds}
			If $\pp \in \intoo{\lppb}{\rch}$, then $\mnf \subseteq \on \subseteq \ma^\complement$.
		\end{corollary}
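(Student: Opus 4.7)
The corollary is essentially a bookkeeping combination of the two immediately preceding results, so the proof plan is short and structural rather than computational.

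My plan is to split the conjunction $\mnf \subseteq \on \subseteq \ma^\complement$ into the two inclusions and treat each separately. For the left inclusion, I would invoke Lemma~\ref{lemma:persistence-parameter-lower-bound}: the hypothesis $\pp > \lppb$ is, by the second item of that lemma, literally equivalent to the inequality $\hd < \sqrt{2\pp\bigl(\sqrt{\rch(\pp + 2\rch)} - \rch\bigr)}$, and the first item then gives $\mnf \subseteq \on$. Nothing further is needed on this side.

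For the right inclusion, I would invoke the unnamed lemma just before the corollary: since $\pp \in \intoo{\lppb}{\rch} \subseteq \intoo{0}{\rch}$, that lemma yields $\cn \subseteq \otnm{\rch}$, and in particular $\on \subseteq \cn$ does not intersect~$\ma$. To make the containment $\on \subseteq \ma^\complement$ fully explicit one may note that by definition of the reach, every point of~$\ma$ lies at distance at least~$\rch$ from~$\mnf$, so $\otnm{\rch} \cap \ma = \emptyset$, i.e.\ $\otnm{\rch} \subseteq \ma^\complement$; composing this with $\on \subseteq \otnm{\rch}$ closes the argument.

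There is really no obstacle here: the two preceding lemmas have already absorbed all the geometric content, the lower bound via the planar tangent-normal argument against associated $\rch$-balls (Lemma~\ref{lemma:general-manifold-properties}) and the upper bound via Lemma~\ref{lemma:distance-to-manifold}. The only thing to verify is that the interval $\intoo{\lppb}{\rch}$ is nonempty in the regime of interest, which is implicit in the hypothesis but could be worth a parenthetical remark; otherwise the corollary is just the conjunction of the two lemmas applied on the common range of~$\pp$.
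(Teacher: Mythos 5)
Your proposal is correct and matches the paper exactly: the corollary is stated there without a separate proof, precisely as the conjunction of Lemma~\ref{lemma:persistence-parameter-lower-bound} (giving $\mnf \subseteq \on$ for $\pp > \lppb$) and the preceding lemma (giving $\cn \subseteq \otnm{\rch}$, hence $\on \subseteq \ma^\complement$, for $\pp < \rch$). Your added remark that $\otnm{\rch} \cap \ma = \emptyset$ follows from the definition of the reach is a fine, harmless explicitation of what the paper leaves implicit.
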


	\section{Program}\label{section:program}
		
		In this section (as well as the next one) we assume that $\rch = 1$ and $0 < \md < \ad$.
		
		Our goal is to prove that if we restrict the persistence parameter~$\pp$ to a suitable interval, the union of ellipsoids~$\on$ deformation retracts to~$\mnf$. Recall that the normal deformation retraction is the map retracting a point to its closest point on the manifold, i.e.~the convex combination of a point and its projection: ${(\gp, t) \mapsto (1-t) \;\! \gp + t \;\! \pr(\gp) = \gp + t \;\! \prv(\gp)}$. For example, in~\cite{NSW} this is how the union of balls around sample points is deformation retracted to the manifold.
		
		The same idea does not in general work for the union of ellipsoids, or any other sufficiently elongated figures. Figure~\ref{figure:normal-projection-not-working} shows what can go wrong.
		\begin{figure}[!ht]
			\centering
			\includegraphics[width=0.9\textwidth]{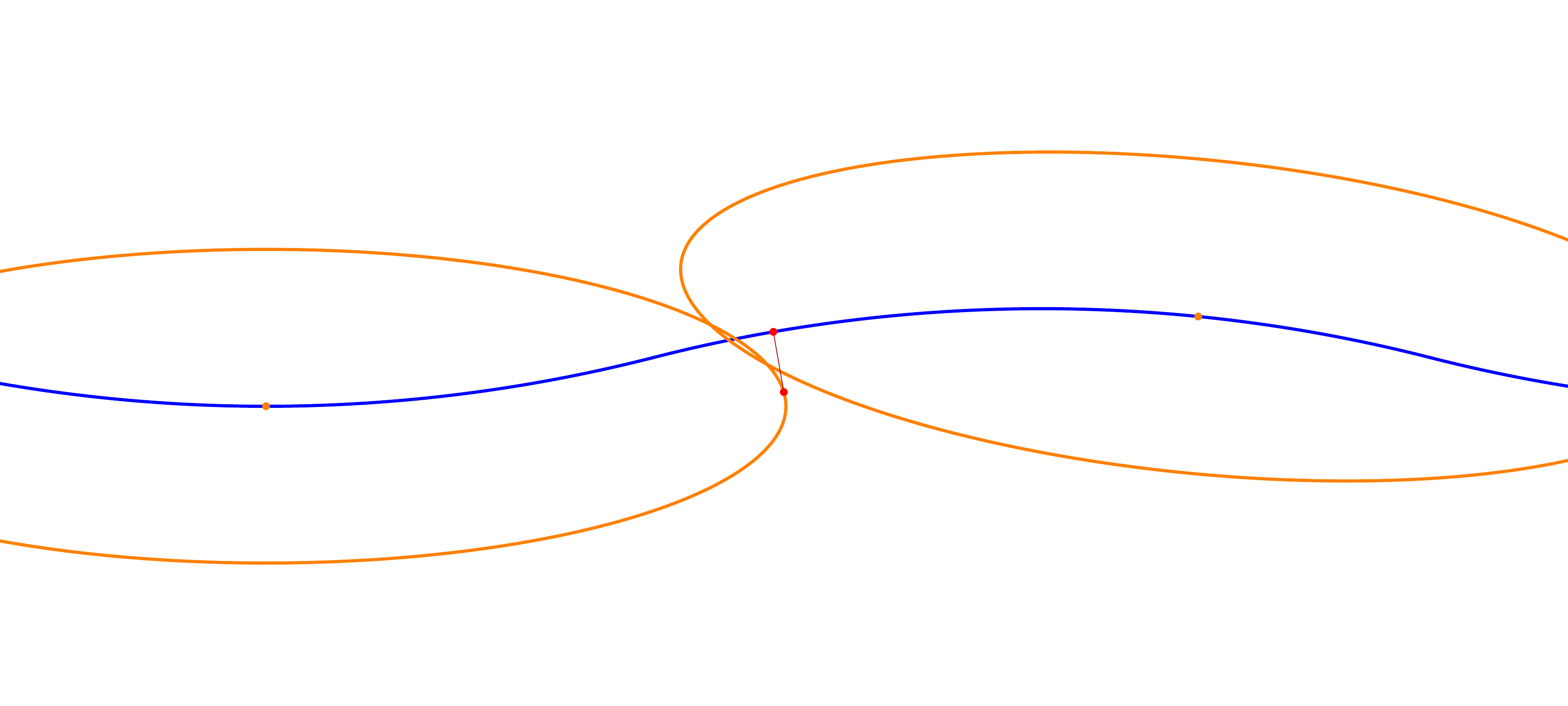}
			\caption{Normal deformation retraction does not always work.}\label{figure:normal-projection-not-working}
		\end{figure}
		
		However, it turns out that the only places where the normal deformation retraction does not work are the neighbourhoods of tips of some ellipsoids which avoid all other ellipsoids. This section is dedicated to proving the following form of this claim: for all points in at least two ellipsoids the normal deformation retraction works. This means that the line segment between a point~$\gp$ and $\pr(\gp)$ is contained in the union of ellipsoids, but actually more holds: the line segment is contained already in one of the ellipsoids. More formally, the rest of the section is the proof of the following lemma.
		
		\begin{lemma}\label{lemma:program-conclusion}
			For every $\gp \in \on$, if there are $\smpp', \smpp'' \in \smp$, $\smpp' \neq \smpp''$ such that ${\gp \in \of{\smpp'} \cap \of{\smpp''}}$, then there exists $\smpp \in \smp$ such that $\gp, \pr(\gp) \in \of$. By convexity the entire line segment between $\gp$ and~$\pr(\gp)$ is therefore in~$\of$.
		\end{lemma}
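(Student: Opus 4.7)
The plan is to reduce the statement to a finite-dimensional, semi-algebraic feasibility problem and then discharge it with a rigorous computer-assisted search. First I would exploit symmetries to shrink the parameter space: translate so that $\smpp'$ lies at the origin and rotate so that $\ts{\smpp'}$ is the span of the first $\md$ coordinate axes. The subgroup $\mathrm{O}(\md) \times \mathrm{O}(\ad-\md)$ preserving this normalization acts on the remaining data, and by Lemma~\ref{lemma:general-manifold-properties}(\ref{lemma:general-manifold-properties:planar-tangent-normal-coordinate-system}) I may take $\gp$ to lie in the plane spanned by one tangent and one normal axis. Additional symmetries reduce the position of $\smpp''$ and the orientation of $\ts{\smpp''}$ to a handful of real parameters; crucially, the reach hypothesis $\rch = 1$ forces $\smpp'$ and $\smpp''$ to lie outside each other's associated open unit balls and bounds the tilt of $\ts{\smpp''}$ relative to $\ts{\smpp'}$ in terms of $\|\smpp' - \smpp''\|$.

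Next I would convert the assertion into explicit inequalities. The hypothesis $\gp \in \of{\smpp'} \cap \of{\smpp''}$ translates, via Definition~\ref{definition:depth-parameter} and Lemma~\ref{lemma:inside-ellipsoid}, to $\pep[\smpp'](\gp) < \pp$ and $\pep[\smpp''](\gp) < \pp$. Lemma~\ref{lemma:distance-to-manifold} yields $\|\pr(\gp)-\gp\| \leq \pp$, and the reach constrains $\pr(\gp)$ to lie outside the interiors of all unit balls associated to $\smpp'$ and to $\smpp''$. Encode these as polynomial inequalities in the coordinates of $(\pp, \gp, \smpp'', \ts{\smpp''}, \pr(\gp))$ and restate the conclusion as the disjunction $\pep[\smpp'](\pr(\gp)) \leq \pp$ or $\pep[\smpp''](\pr(\gp)) \leq \pp$, which by convexity of the ellipsoids supplies the required inclusion of the full line segment.

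The program then subdivides the compact feasible parameter region into axis-aligned boxes and uses interval arithmetic to certify the disjunction on each box; boxes that fail are refined and reprocessed until the whole region is covered. The main obstacle I expect is the cost of the search: even after symmetry reductions the parameter space has several real variables, and near the boundary $\pep = \pp$ the inequalities are tight, which forces deep subdivision. Aggressive use of the reach-derived constraints to prune infeasible regions early, together with a careful choice of intermediate quantities to interval-enclose, is what makes the enumeration terminate in practice. The mathematical content here is elementary; the hard part is engineering a certified search that actually halts.
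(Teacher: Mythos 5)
Your overall strategy---normalize by symmetry, reduce to a compact finite-dimensional configuration space, and certify an inequality by a rigorous computer search---is in the same spirit as the paper. But the statement you propose to certify is not the lemma, and the discrepancy is fatal. You restate the conclusion as ``$\pep[\smpp'](\pr(\gp)) \leq \pp$ or $\pep[\smpp''](\pr(\gp)) \leq \pp$'', i.e.\ that $\pr(\gp)$ lies in one of the \emph{two given} ellipsoids. The lemma only asserts that \emph{some} $\smpp \in \smp$ works, and in the paper's argument this witness is in general a \emph{third} sample point: under the standing hypotheses of Section~\ref{section:program} ($\rch = 1$, $\pp \in \intcc{m_\pp}{M_\pp}$, and $\hd < \sqrt{2\pp(\sqrt{\pp+2}-1) - \hd_\mathrm{off}}$) there is a sample point $\smpp$ with $\dst\big(\smpp, \pr(\gp)\big) \leq \hd$, hence $\pr(\gp) \in \of$, and the computation certifies that whenever $\pr(\gp) \notin \of{\smpp'}$ this same ellipsoid also contains $\gp$. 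Figure~\ref{figure:three-ellipsoids-configuration} depicts exactly the situation your reformulation excludes: a point in two ellipsoids whose projection lies only in another ellipsoid. Your encoding never invokes the Hausdorff-distance hypothesis (nor the restriction of $\pp$ to $\intcc{0.5}{0.96}$), so it cannot produce the third witness; and without the density hypothesis the lemma itself fails (take a sparse sample whose only two ellipsoids overlap near their tips). Consequently an interval-arithmetic search for your disjunction would not terminate with a certificate---it would keep refining boxes containing genuine counterexamples.

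There is a second gap even if the conclusion is corrected: you replace ``there exists a $\C{1}$ manifold of reach $\geq 1$ through $\smpp'$, $\smpp''$ and $\pr(\gp)$, with a sample point near $\pr(\gp)$'' by a short list of necessary conditions (exclusion from associated unit balls, a tilt bound on $\ts{\smpp''}$, $\|\gp - \pr(\gp)\| \leq \pp$), and you give no argument that these relaxed constraints are \emph{sufficient} for the desired inequality; the relaxed feasible region contains configurations realized by no manifold, on which the inequality can simply be false, so the search again cannot close. (The tilt bound itself is also nowhere proved in the paper and would need its own derivation.) The real content of Section~\ref{section:program} is precisely the reduction that makes a finite search sound: a worst-case curvature argument showing that any failing configuration can be flattened into a planar one in which the manifold between $\smpp'$ and the nearby sample point consists of at most three unit arcs, the normalization $\gp \in \bd{\cf{\smpp'}}$, the bound $\tan^2(\alpha) \leq \pp + \pp^2$ (else $\pr(\gp) \in \of{\smpp'}$ already), and $\sigma \leq \pi$ (a medial-axis argument). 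Only then does one obtain the compact four-parameter family $(\alpha, \sigma, \pp, \chi)$ on which a single explicit function $\pf$ with a hand-computed Lipschitz constant is checked on a grid. Note also that near the boundary of the admissible $(\hd, \pp)$ region the margin tends to $0$; the paper copes by retreating to $m_\pp = 0.5$, $M_\pp = 0.96$ and $\hd_\mathrm{off} = 0.55$, whereas your adaptive refinement ``until the whole region is covered'' would not halt without an analogous retreat, a point you flag but do not resolve.
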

		
		To prove this, we would in principle need to examine all possible configurations of ellipsoids and a point. However, we can restrict ourselves to a set of cases, which include the ``worst case scenarios''.
		
		Let $\smpp', \smpp'' \in \smp$ be two different sample points, and let $\gp \in \cf{\smpp'} \cap \cf{\smpp''}$ (we purposefully take closed ellipsoids here). Denote $\sgp \dfeq \pr(\gp)$. We claim that there is $\smpp \in \smp$ (not necessarily distinct from $\smpp'$ and~$\smpp''$) such that $\gp \in \cf$ and $\sgp \in \of$. Due to convexity of ellipsoids, the line segment $\gp\sgp$ is in~$\cf$; with the possible exception of the point~$\gp$, this line segment is in~$\of$.
		
		Assuming $\pp \in \intoo{\lppb}{1}$, the point $\sgp$ is covered by at least one open ellipsoid. Suppose that none of the closed ellipsoids, containing~$\sgp$ in their interior, contains~$\gp$. Let us try to construct a situation where this is most likely to be the case. We will derive a contradiction by showing that even in these ``worst case scenarios'' we fail in satisfying this assumption.
		
		To determine whether a point~$\gp$ is in the ellipsoid with the center~$\smpp'$, the following two pieces of information are sufficient: the distance between~$\gp$ and~$\smpp'$, and the angle between the line segment $\gp\smpp'$ and the normal space~$\ns{\smpp'}$. Moreover, membership of~$\gp$ in the ellipsoid is ``monotone'' with respect to these two conditions: if a point is in the ellipsoid, it will remain so if we decrease its distance to~$\smpp'$ or increase the angle to the normal space.
		
		We will produce a set of configurations which include the extremal points for these two criteria (maximal distance from the ellipsoid center, minimal angle to the normal space). If every such point is still in the ellipsoid, then all possible points are.
		
		Consider a planar tangent-normal coordinate system with the origin in~$\smpp'$ which contains~$\gp$ in the fourth quadrant (nonnegative tangent coordinate, nonpositive normal coordinate). In this coordinate system, the manifold passes horizontally through~$\smpp'$. Consider the part of the manifold with positive tangent coordinate (i.e.~the part of the manifold rightwards of~$\smpp'$). The fastest that this piece can turn away from~$\gp$ is in this plane along the boundary of the upper $\rch$-ball, associated to~$\smpp'$.\footnote{Imagine distinct points $\pt{A}$ and~$\pt{B}$ in some higher-dimensional Euclidean space and a non-zero vector~$\vec{a}$, starting at~$\pt{A}$ and having a nonnegative scalar product with~$\vec{AB}$. Consider paths, starting at~$\pt{A}$ and going in the direction of~$\vec{a}$, the curvature of which is bounded by some number. Then the fastest that we can get away from~$B$ along such a path is within the plane, determined by~$\pt{A}$, $\pt{B}$ and~$\vec{a}$ --- specifically, along the arc with the maximum curvature.} Suppose the manifold continues along this path until some point~$\gp'$, and consider a plane containing the points $\gp$, $\gp'$ and~$\smpp$ where the distance between $\smpp \in \smp$ and~$\sgp$ is bounded by~$\hd$, so $\sgp \in \of$. Going from~$\gp'$ to~$\smpp$, the quickest way to turn the normal direction towards~$\gp$ is within this plane, and along a $\rch$-arc. While this second plane need not be the same as the first one, they intersect along the line containing~$\gp$ and~$\gp'$. We can turn the half-plane containing~$\smpp'$ and the half-plane containing~$\smpp$ along the line so that they form one plane, and that will be the configuration where it is equally (un)likely for $\of$ to contain~$\gp$, but where $\smpp'$, $\gp$, $\gp'$, $\sgp$ and~$\smpp$ all lie in the same plane.
		
		We can make the same argument starting from~$\smpp''$ instead of~$\smpp'$, so we conclude the following: if our claim fails for some configuration of~$\gp$, $\sgp$, $\smpp'$, $\smpp''$, $\smpp$, then it fails in a planar case where the part of the manifold connecting points~$\smpp'$ and~$\smpp''$ consists of (at most) three $\rch$-arcs, as in Figure~\ref{figure:three-ellipsoids-configuration}.
		\begin{figure}[!htp]
			\centering
			\includegraphics[width=0.8\textwidth]{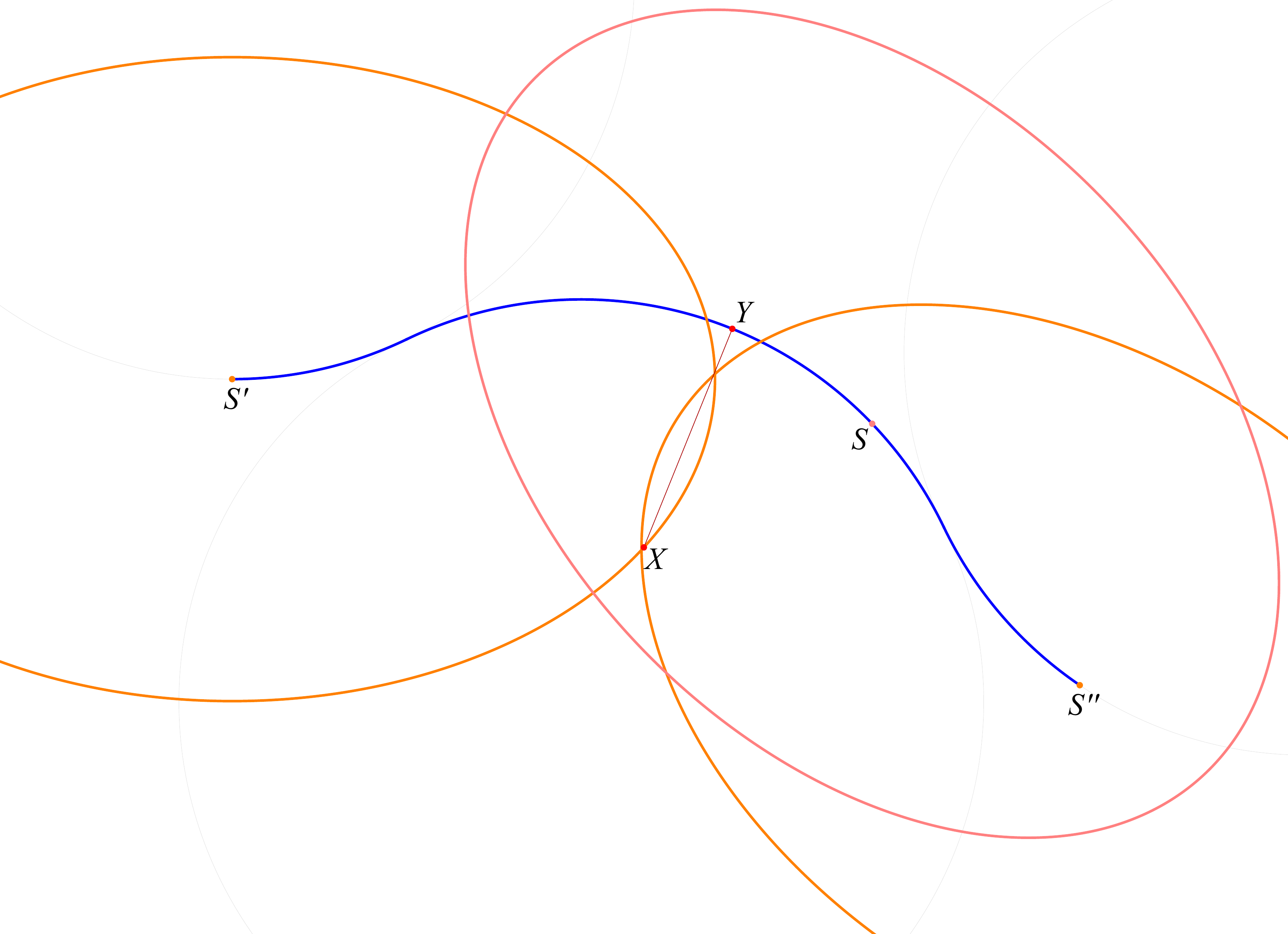}
			\caption{Point in two ellipsoids, whose projection is in another ellipsoid}\label{figure:three-ellipsoids-configuration}
		\end{figure}
		
		We started with the assumption $\gp \in \cf{\smpp'}{\pp} \cap \cf{\smpp''}{\pp}$, but we may without loss of generality additionally assume $\gp \in \bd\cf{\smpp'}{\pp}$. If we had a counterexample~$\gp$ to our claim in the interior of all ellipsoids containing~$\gp$, we could project it in the opposite direction of~$\pr(\gp)$ to the first ellipsoid boundary we hit, and declare the center of that ellipsoid to be~$\smpp'$.
		
		Although the reduction of cases we have made is already a vast simplification of the necessary calculations, we find that it is still not enough to make a theoretical derivation of the desired result feasible. Instead, we produce a proof with a computer.
		
		We can reduce the possible configurations to four parameters (see Figure~\ref{figure:program-ellipsoids-configuration}):
		\begin{itemize}
			\item
				$\alpha$ denotes the angle measuring the length of the first $\rch$-arc,
			\item
				$\sigma$ denoted the angle for the second $\rch$-arc until~$\smpp$,
			\item
				$\pp$ is, as usual, the persistence parameter,
			\item
				$\chi$ determines the position of~$\gp$ in the boundary~$\bd\cf{\smpp'}$.
		\end{itemize}
		\begin{figure}[!htp]
			\centering
			\includegraphics[width=0.8\textwidth]{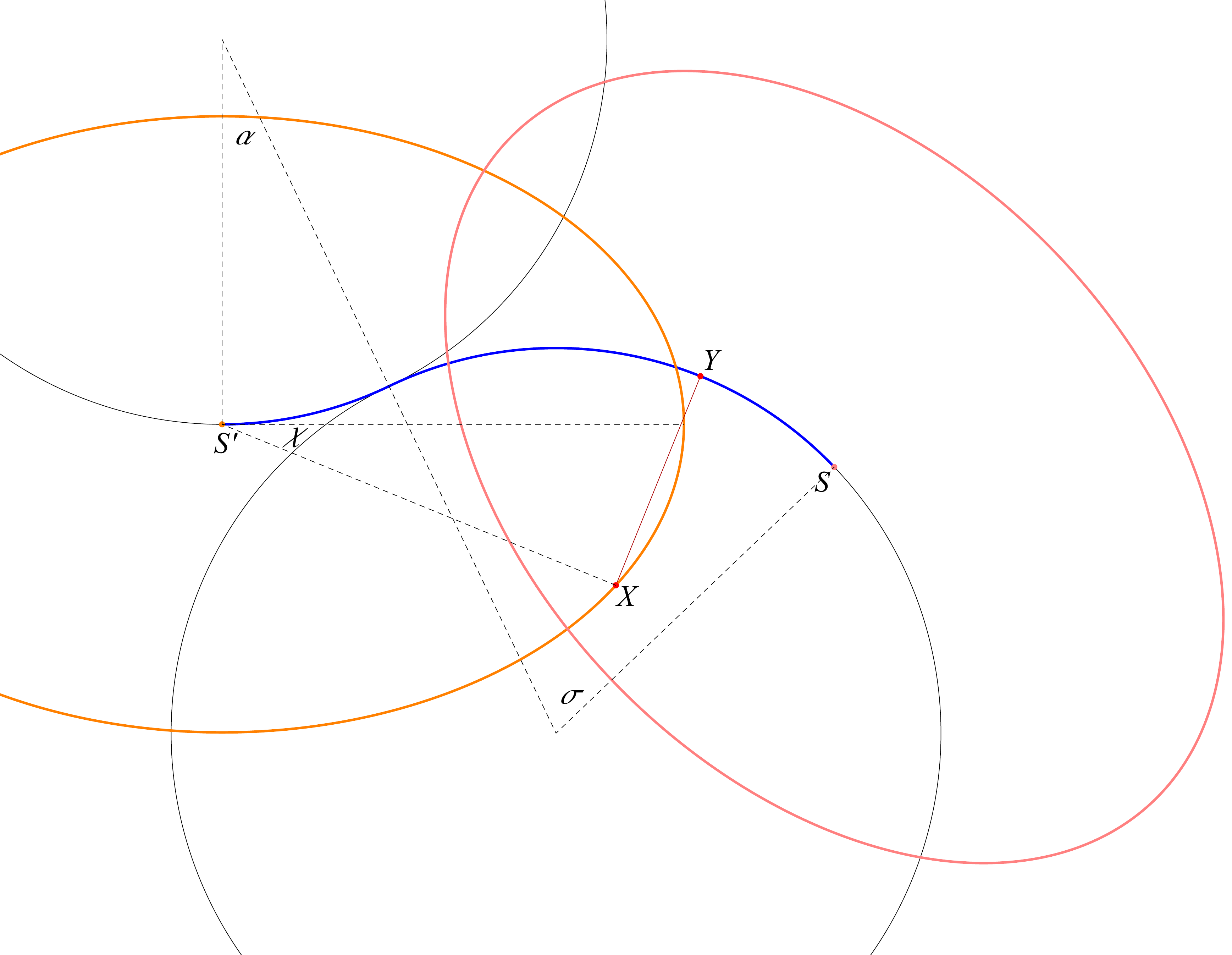}
			\caption{Notation of parameters in the program}\label{figure:program-ellipsoids-configuration}
		\end{figure}
		Notice that Figure~\ref{figure:program-ellipsoids-configuration} does not include both ellipsoids containing~$\gp$ but not~$\sgp$, like Figure~\ref{figure:three-ellipsoids-configuration} does. It turns out that as soon as~$\sgp$ is not in the first ellipsoid, both $\gp$ and~$\sgp$ will be in an ellipsoid, the center of which is within $\hd$~distance from~$\sgp$. This allows us to restrict ourselves to just the four aforementioned variables, which makes the program run in a reasonable time.
		
		The space of the configurations we restricted ourselves to --- let us denote it by~$\cs$ --- is compact (we give its precise definition below). We want to calculate for each configuration in~$\cs$ that $\gp$ is in some ellipsoid with the center within $\hd$~distance from~$\sgp$ (it follows automatically that $\sgp$ is in this ellipsoid). The boundary of the ellipsoid is a level set of a smooth function. We can compose it with a suitable linear function so that $\gp$ is in the open ellipsoid if and only if the value of the adjusted function is positive. Let us denote this adjusted function by $\pf\colon \cs \to \RR$; we have our claim if we show that~$\pf$ is positive for all configurations in~$\cs$.
		
		Of course, the program cannot calculate the function values for all infinitely many configurations in~$\cs$. We note that the (continuous) partial derivatives of~$\pf$ are bounded on compact~$\cs$, hence the function is Lipschitz. If we change the parameters by at most~$\delta$, the function value changes by at most~$C \cdot \delta$ where $C$ is the Lipschitz coefficient. The program calculates the function values in a finite lattice of points, so that each point in~$\cs$ is at most a suitable~$\delta$ away from the lattice, and verifies that all these values are larger than~$C \cdot \delta$. This shows that~$\pf$ is positive on the whole~$\cs$.
		
		Let us now define~$\cs$ precisely and then calculate the Lipschitz coefficient of~$\pf$. We may orient the coordinate system so that the point~$\gp$ is in the closed fourth quadrant. Hence we have $\gp = \big(\sqrt{\pp + \pp^2} \cos(\chi), -\pp \, \sin(\chi)\big)$, where $\chi$ ranges over the interval~$\intcc{0}{\frac{\pi}{2}}$.
		
		Unfortunately due to our method we cannot allow~$\pp$ to range over the whole interval~$\intoo{\lppb}{1}$; if we did, the values of~$\pf$ would come arbitrarily close to zero, in particular below~$C \cdot \delta$, so the program would not prove anything. Let us set $\pp \in \intcc{m_\pp}{M_\pp}$, where we have chosen in our program $m_\pp \dfeq 0.5$ and $M_\pp \dfeq 0.96$. The closer $M_\pp$ is to~$1$, the smaller the density we prove is required. However, larger~$M_\pp$ necessitates smaller~$\delta$ which increases the computation time. Through experimentation, we have chosen bounds, so that the program ran for a few days. Ultimately, with better computers (and more patience) one can improve our result. We note that experimentally we never came across any counterexample to our claims even outside of~$\cs$ (so long as the configuration satisfied the theoretical assumptions from Corollary~\ref{corollary:persistence-parameter-bounds}). We discuss this further in Section~\ref{section:discussion}.
		
		We can now calculate the upper bound on~$\alpha$ (the lower bound is just~$0$). For fixed~$\pp$ and~$\chi$ we claim that the case $\alpha \geq \arctan\big(\frac{\sqrt{\pp + \pp^2} \cos(\chi)}{1 + \pp \, \sin(\chi)}\big)$ is impossible. In this case the point $(0, 1) + \frac{\gp - (0, 1)}{\|\gp - (0, 1)\|}$ lies on the manifold, and is the closest to~$\gp$ among points on~$\mnf$. This is because its distance to~$\gp$ is bounded by~$\pp$ (by Lemma~\ref{lemma:distance-to-manifold}) which is smaller than $\rch = 1$, so its associated $\rch$-ball includes all points, closer to~$\gp$, and $\mnf$ cannot intersect an open associated $\rch$-ball --- see Figure~\ref{figure:program-projection-on-first-arc}.
		\begin{figure}[!ht]
			\centering
			\includegraphics[width=0.8\textwidth]{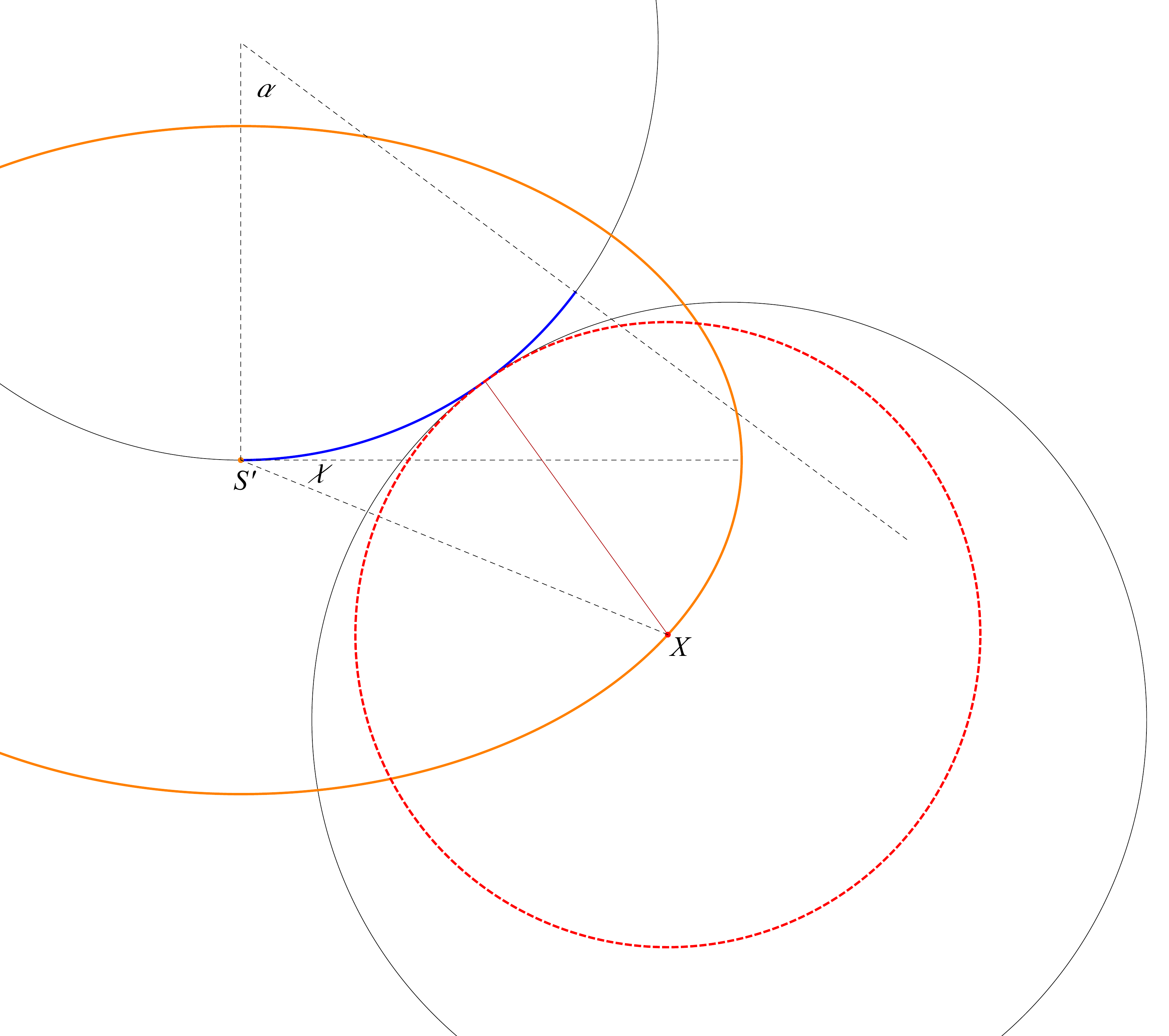}
			\caption{Too large~$\alpha$}\label{figure:program-projection-on-first-arc}
		\end{figure}
		We claim that the point $\pr(\gp) = (0, 1) + \frac{\gp - (0, 1)}{\|\gp - (0, 1)\|}$ lies in~$\of{\smpp'}$. This is a contradiction since then~$\sgp = \pr(\gp) \in \of{\smpp'}$.
		
		Clearly, it suffices to verify $\pr(\gp) \in \of{\smpp'}$ for $\chi = 0$ (for larger~$\chi$ the point~$\pr(\gp)$ lies on the $\rch$-arc further towards the ellipsoid center~$\smpp'$). If we put the coordinates of~$\pr(\gp)$ for $\chi = 0$ into the equation for the ellipsoid, we see that we need $\frac{2p^2 + p + 2 - 2 \sqrt{p^2+p+1}}{p^4 + p^3 + p^2} < 1$. This is equivalent to $-p^8-2 p^7+p^6+4 p^5+5 p^4+2 p^3-p^2 > 0$, which is equivalent to $-p^2 (p+1) \big(p^5+p^4-2 p^3-2 p^2-3 p+1\big) > 0$ which is further equivalent to $p^5+p^4-2 p^3-2 p^2-3 p+1 < 0$. The derivative of the polynomial on the left is
		\[5p^4 + 4p^3 - 6p^2 - 4p - 3 \leq -(5p^2 + 4p)(1-p^2) - 3 < 0,\]
		so $p^5+p^4-2 p^3-2 p^2-3 p+1$ is decreasing on $\intcc{m_\pp}{M_\pp} \subseteq \intoo{0}{1}$. The value of this polynomial at $m_\pp = 0.5$ is $-1.15625 < 0$, so the polynomial is negative on $\intcc{m_\pp}{M_\pp}$, as required.\footnote{We could reduce~$m_\pp$ to around~$0.273$, and the argument would still work. However, starting at~$0.5$ still leaves us with a decently sized length of the persistence interval, while (as we see later) reducing the Lipschitz constant for~$f$ and shortening the run-time of the program.}
		
		With this we have confirmed that it suffices to restrict ourselves to $\alpha \leq \arctan\big(\frac{\sqrt{\pp + \pp^2} \cos(\chi)}{1 + \pp \, \sin(\chi)}\big)$. As mentioned, this bound will be the largest at $\chi = 0$, so we will cover the relevant configurations for $\alpha \leq \arctan\big(\sqrt{\pp + \pp^2}\big)$, or equivalently (for $\alpha \in \intco{0}{\frac{\pi}{2}}$ and $\pp \in \intoo{0}{1}$) $\tan^2(\alpha) \leq \pp + \pp^2$, in particular $\tan^2(\alpha) \leq M_\pp + M_\pp^2$.
		
		Finally, we claim that we can restrict ourselves to $\sigma \in \intcc{0}{\pi}$. If the manifold were to trace a $\rch$-circle within a plane for longer than~$\pi$, it would necessarily be that $\rch$-circle. If it were to veer away from the circle, the medial axis would continue from the center of the circle to the area between the two parts of the manifold (see Figure~\ref{figure:program-arc-greater-than-pi}) which would contradict that the manifold's reach is~$\rch$.
		\begin{figure}[!htp]
			\centering
			\includegraphics[width=0.7\textwidth]{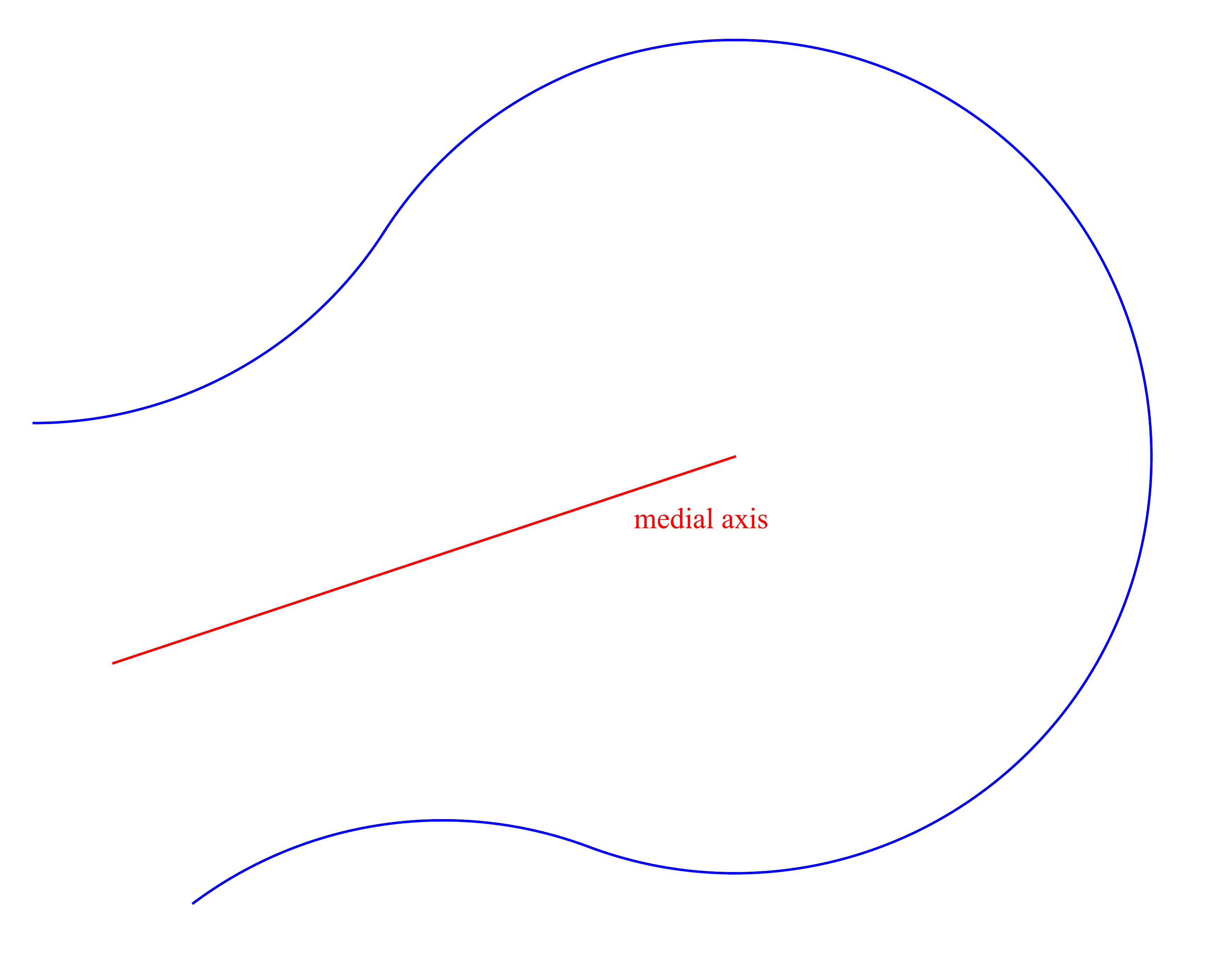}
			\caption{Medial axis of a manifold tracing an arc for longer than~$\pi$}\label{figure:program-arc-greater-than-pi}
		\end{figure}
		
		However, if the manifold was indeed just a circle in a plane, then $\sgp$ would be inside of~$\of{\smpp'}$ by the same argument we used when calculating the bound on~$\alpha$. Hence we may postulate $\sigma \in \intcc{0}{\pi}$.
		
		Having calculated the bounds on the variables, we may now define
		\[\cs \dfeq \set[2]{(\alpha, \sigma, \pp, \chi) \in \intcc{0}{\arctan(\sqrt{M_\pp + M_\pp^2})} \times \intcc{0}{\pi} \times \intcc{m_\pp}{M_\pp} \times \intcc{0}{\frac{\pi}{2}}}{\tan^2(\alpha) \leq \pp + \pp^2}.\]
		For the sake of a later calculation we also define a slightly bigger area,
		\[\ecs \dfeq \set[2]{(\alpha, \sigma, \pp, \chi) \in \intcc{0}{\arctan(\sqrt{M_\pp + M_\pp^2})} \times \intcc{0}{\pi} \times \intcc{m_\pp}{M_\pp} \times \intcc{0}{\frac{\pi}{2}}}{\tan^2(\alpha) \leq 2\pp + \pp^2}.\]
		Both $\cs$ and~$\ecs$ are $4$-dimensional rectangular cuboids with a small piece removed; in the $\alpha$-$\pp$-plane they look as shown in Figure~\ref{figure:program-configuration-regions}.
		\begin{figure}[!htp]
			\centering
			\includegraphics[width=0.55\textwidth]{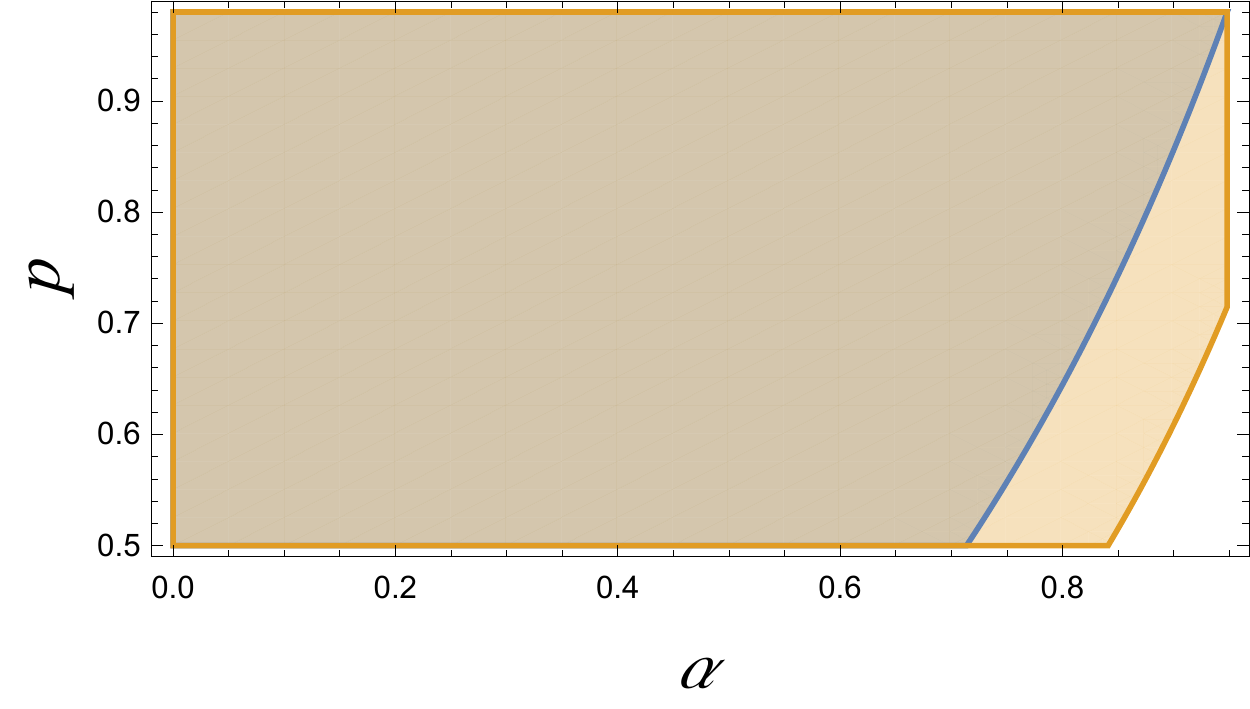}
			\caption{Regions~$\cs$ and~$\ecs$}\label{figure:program-configuration-regions}
		\end{figure}
		
		Given $(\alpha, \sigma, \pp, \chi) \in \cs$, we have $\gp = (\gp_T, \gp_N) = \big(\sqrt{\pp + \pp^2} \cos(\chi), -\pp \, \sin(\chi)\big)$. Let us denote the center of the $\rch$-ball, along the boundary of which lies the arc containing~$\smpp$, by~$\cab$. Observe from Figure~\ref{figure:program-point-coordinates} that $\cab = (0, 1) + 2 \big(\sin(\alpha), -\cos(\alpha)\big)$ and
		\[\smpp = (\smpp_T, \smpp_N) = \cab + \big(-\sin(\alpha-\sigma), \cos(\alpha-\sigma)\big) =\]
		\[= \big(2\sin(\alpha) - \sin(\alpha-\sigma), 1 - 2\cos(\alpha) + \cos(\alpha-\sigma)\big)\]
		(this works also if $\alpha-\sigma$ is negative).
		\begin{figure}[!ht]
			\centering
			\includegraphics[width=0.8\textwidth]{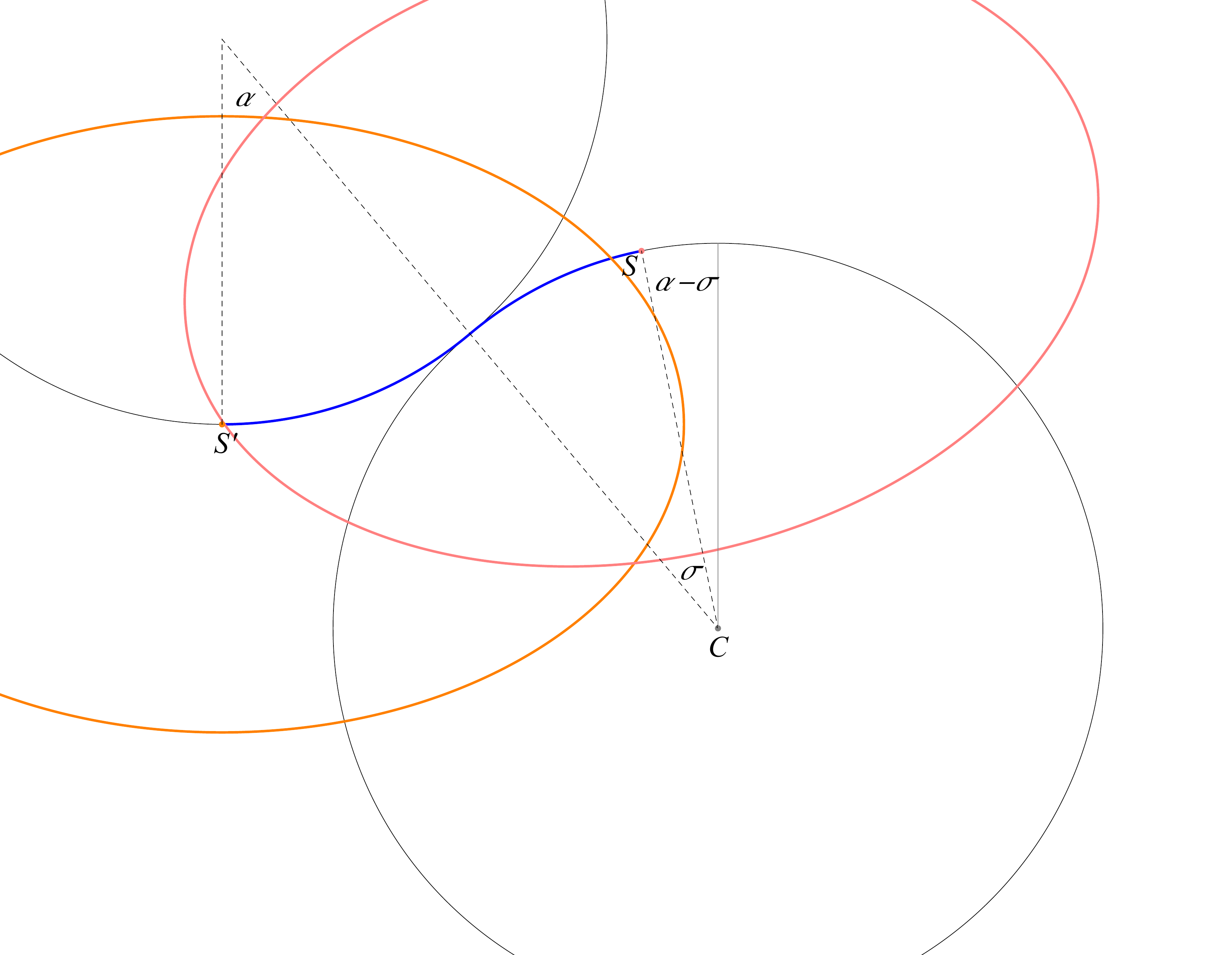}
			\caption{Position of~$\cab$ and~$\smpp$}\label{figure:program-point-coordinates}
		\end{figure}
		
		It will be convenient to define~$\pf$ on the larger area~$\ecs$ (although we are still only interested in positivity of~$\pf$ on~$\cs$). Recall that we want $\pf$ to be a function, so that its $0$-level set is the boundary of~$\of$, and is positive on $\of$ itself. Let $x, y$ be coordinates in our current coordinate system, $x', y'$ the coordinates in the coordinate system, translated by~$\smpp$, and $x'', y''$ the coordinates if we rotate the translated coordinate system by~$\alpha-\sigma$ in the positive direction. Hence
		\[\begin{bmatrix} x' \\ y' \end{bmatrix} = \begin{bmatrix} x \\ y \end{bmatrix} - \smpp, \qquad \begin{bmatrix} x'' \\ y'' \end{bmatrix} = \begin{bmatrix} \cos(\alpha-\sigma) & \sin(\alpha-\sigma) \\ -\sin(\alpha-\sigma) & \cos(\alpha-\sigma) \end{bmatrix} \cdot \begin{bmatrix} x' \\ y' \end{bmatrix}.\]
		In the rotated translated coordinate system, the equation for the boundary of the ellipse is $\frac{x''^2}{\pp + \pp^2} + \frac{y''^2}{\pp^2} = 1$, or equivalently $p^2 (\pp+1) - \big(x''^2 \pp + y''^2 (\pp+1)\big) = 0$. We therefore define $f\colon \ecs \to \RR$ by
		\[\pf(\alpha, \sigma, \pp, \chi) \dfeq \pp^2 (\pp+1) - \Big(\big(\cos(\alpha-\sigma) (\gp_T - \smpp_T) + \sin(\alpha-\sigma) (\gp_N - \smpp_N)\big)^2 \pp\]
		\[+ \big(-\sin(\alpha-\sigma) (\gp_T - \smpp_T) + \cos(\alpha-\sigma) (\gp_N - \smpp_N)\big)^2 (\pp+1)\Big).\]
		
		Recall that it follows from the multivariate Lagrange mean value theorem that for any $a, b \in \ecs$
		\[\big|\pf(a) - \pf(b)\big| \leq \max\|\nabla{\pf}\| \cdot \|a - b\|\]
		where the maximum of the norm of the gradient is taken over the line segment connecting the points $a$ and~$b$. In particular, the maximum over the entire~$\ecs$ is a Lipschitz coefficient for~$\pf$.
		
		This theorem holds for any pair of conjugate norms. We take the $\infty$-norm on~$\ecs$, and the \mbox{$1$-norm} for the gradient. The reason is that we cover the region~$\cs$ by cuboids which are almost cubes (in the centers of which we calculate the function values). The smaller the distance between the center of a cube and any of its points, the better the estimate we obtain. Hence
		\[\big|\pf(\alpha + \Delta{\alpha}, \sigma + \Delta{\sigma}, \pp + \Delta{\pp}, \chi + \Delta{\chi}) - \pf(\alpha, \sigma, \pp, \chi)\big| \leq\]
		\[\leq \max_{\ecs}\Big(\Big|\pd{\pf}{\alpha}\Big| + \Big|\pd{\pf}{\sigma}\Big| + \Big|\pd{\pf}{\pp}\Big| + \Big|\pd{\pf}{\chi}\Big|\Big) \cdot \max\set{|\Delta{\alpha}|, |\Delta{\sigma}|, |\Delta{\pp}|, |\Delta{\chi}|}.\]
		
		Before we estimate the absolute values of partial derivatives, let us make several preliminary calculations.
		
		First we put the function into a more convenient form.
		\begin{align*}
			\pf(\alpha, \sigma, \pp, \chi) &= \pp^2 (\pp+1) - \Big(\big(\cos(\alpha-\sigma) (\gp_T - \smpp_T) + \sin(\alpha-\sigma) (\gp_N - \smpp_N)\big)^2 \pp \\
			&\quad + \big(-\sin(\alpha-\sigma) (\gp_T - \smpp_T) + \cos(\alpha-\sigma) (\gp_N - \smpp_N)\big)^2 (\pp+1)\Big) = \\
			&= \pp^2 (\pp+1) - \Big(\big(\cos(\alpha-\sigma) (\gp_T - \smpp_T) + \sin(\alpha-\sigma) (\gp_N - \smpp_N)\big)^2 \\
			&\quad + \big(-\sin(\alpha-\sigma) (\gp_T - \smpp_T) + \cos(\alpha-\sigma) (\gp_N - \smpp_N)\big)^2\Big) \pp \\
			&\quad - \big(-\sin(\alpha-\sigma) (\gp_T - \smpp_T) + \cos(\alpha-\sigma) (\gp_N - \smpp_N)\big)^2 = \\
			&= \pp^2 (\pp+1) - \|\gp - \smpp\|^2 \pp \\
			&\quad - \big(-\sin(\alpha-\sigma) (\gp_T - \smpp_T) + \cos(\alpha-\sigma) (\gp_N - \smpp_N)\big)^2 = \\
			&= \pp^2 (\pp+1) - \spr{\gp - \smpp}{\gp - \smpp} \:\! \pp - \big(\spr{(-\sin(\alpha-\sigma), \cos(\alpha-\sigma))}{\gp - \smpp}\big)^2
		\end{align*}
		
		Now we calculate the bound on~$\gp - \smpp$ and its partial derivatives.
		\begin{align*}
			\gp - \smpp &= \Big(\sqrt{\pp + \pp^2} \cos(\chi) - 2\sin(\alpha) + \sin(\alpha-\sigma), -\pp \sin(\chi) - 1 + 2\cos(\alpha) - \cos(\alpha-\sigma)\Big) \\
			\|\gp - \smpp\| &\leq \|\gp - \cab\| + \|\cab - \smpp\| = \Big\|\big(\sqrt{\pp + \pp^2} \cos(\chi) - 2\sin(\alpha), -\pp \sin(\chi) - 1 + 2\cos(\alpha)\big)\Big\| + 1
		\end{align*}
		The norm will be the largest when either the components are largest ($\chi = 0$, $\alpha = 0$) or smallest ($\chi = \frac{\pi}{2}$, $\alpha = \alpha_{\max} \dfeq \arctan(\sqrt{2\pp + \pp^2})$). In the first case we get ${\|\gp - \cab\|^2 \leq \pp^2 + \pp + 1}$ and in the second (taking into account $\cos(\alpha_{\max}) = \frac{1}{\sqrt{1 + \tan^2(\alpha_{\max})}} = \frac{1}{\sqrt{1 + 2\pp + \pp^2}} = \frac{1}{1 + \pp}$)
		\begin{align*}
			\|\gp - \cab\|^2 &\leq 4\sin^2(\alpha_{\max}) + (-\pp - 1 + 2\cos(\alpha_{\max}))^2 \\
			&= 5 + \pp^2 + 2\pp - 4(1+\pp) \cos(\alpha_{\max}) \\
			&= 1 + \pp^2 + 2\pp \\
			&= (1+\pp)^2,
		\end{align*}
		so either way $\|\gp - \smpp\| \leq 2 + \pp \leq 2 + M_\pp$.
		
		\begin{align*}
			\Big\|\pd{(\gp - \smpp)}{\alpha}\Big\| &= \big\|\big(-2\cos(\alpha) + \cos(\alpha-\sigma), -2\sin(\alpha) + \sin(\alpha-\sigma)\big)\big\| \\
			&\leq 2 \:\! \big\|\big(-\cos(\alpha), -\sin(\alpha)\big)\big\| + \big\|\big(\cos(\alpha-\sigma), \sin(\alpha-\sigma)\big)\big\| \\
			&= 3 \\
			&\\
			\Big\|\pd{(\gp - \smpp)}{\sigma}\Big\| &= \big\|\big(-\cos(\alpha-\sigma), -\sin(\alpha-\sigma)\big)\big\| \\
			&= 1
		\end{align*}
		\begin{align*}
			&\\
			\Big\|\pd{(\gp - \smpp)}{\pp}\Big\| &= \Big\|\Big(\frac{1 + 2\pp}{2 \sqrt{\pp + \pp^2}} \cos(\chi), -\sin(\chi)\Big)\Big\| \\
			&= \Big\|\Big(\big(\frac{1 + 2\pp}{2 \sqrt{\pp + \pp^2}} - 1\big)\cos(\chi), 0\Big) + \Big(\cos(\chi), -\sin(\chi)\Big)\Big\| \\
			&\leq \Big|\frac{1 + 2\pp}{2 \sqrt{\pp + \pp^2}} - 1\Big| + 1 \\
			&= \frac{1 + 2\pp}{2 \sqrt{\pp + \pp^2}} \\
			&\leq \frac{1 + 2m_\pp}{2 \sqrt{m_\pp + m_\pp^2}}
		\end{align*}
		Here the last equality holds because $(1 + 2\pp)^2 = 1 + 4\pp + 4\pp^2 \geq 4\pp + 4\pp^2 = (2 \sqrt{\pp + \pp^2})^2$ and the last inequality holds because $\frac{1 + 2\pp}{2 \sqrt{\pp + \pp^2}}$ is a decreasing function: its derivative is $-\frac{1}{4 (\pp + \pp^2)^{\frac{3}{2}}}$.
		\begin{align*}
			\Big\|\pd{(\gp - \smpp)}{\chi}\Big\| &= \Big\|\big(-\sqrt{\pp + \pp^2} \sin(\chi), -\pp \cos(\chi)\big)\Big\| = \sqrt{\pp \sin^2(\chi) + \pp^2} \leq \sqrt{M_\pp + M_\pp^2}
		\end{align*}
		
		Next we calculate a bound on the term $\spr{(-\sin(\alpha-\sigma), \cos(\alpha-\sigma))}{\gp - \smpp}$ and its derivatives.
		\begin{align*}
			&\quad\ \big|\spr{\big(-\sin(\alpha-\sigma), \cos(\alpha-\sigma)\big)}{\gp - \smpp}\big| \leq \|\gp - \smpp\| \leq 2 + M_\pp \\
			&\\
			&\quad\ \Big|\pd{}{\alpha} \spr{\big(-\sin(\alpha-\sigma), \cos(\alpha-\sigma)\big)}{\gp - \smpp}\Big| \\
			&= \Big|\spr{\big(-\cos(\alpha-\sigma), -\sin(\alpha-\sigma)\big)}{\gp - \smpp} + \spr{\big(-\sin(\alpha-\sigma), \cos(\alpha-\sigma)\big)}{\pd{(\gp - \smpp)}{\alpha}}\Big| \\
			&\leq \|\gp - \smpp\| + \Big\|\pd{(\gp - \smpp)}{\alpha}\Big\| \leq (2 + M_\pp) + 3 = 5 + M_\pp \\
			&\\
			&\quad\ \Big|\pd{}{\sigma} \spr{\big(-\sin(\alpha-\sigma), \cos(\alpha-\sigma)\big)}{\gp - \smpp}\Big| \\
			&= \Big|\spr{\big(\cos(\alpha-\sigma), \sin(\alpha-\sigma)\big)}{\gp - \smpp} + \spr{\big(-\sin(\alpha-\sigma), \cos(\alpha-\sigma)\big)}{\pd{(\gp - \smpp)}{\sigma}}\Big| \\
			&\leq \|\gp - \smpp\| + \Big\|\pd{(\gp - \smpp)}{\sigma}\Big\| \leq (2 + M_\pp) + 1 = 3 + M_\pp \\
			&\\
			&\quad\ \Big|\pd{}{\pp} \spr{\big(-\sin(\alpha-\sigma), \cos(\alpha-\sigma)\big)}{\gp - \smpp}\Big| \\
			&= \Big|\spr{\big(-\sin(\alpha-\sigma), \cos(\alpha-\sigma)\big)}{\pd{(\gp - \smpp)}{\pp}}\Big| \leq \Big\|\pd{(\gp - \smpp)}{\pp}\Big\| \leq \frac{1 + 2m_\pp}{2 \sqrt{m_\pp + m_\pp^2}}
		\end{align*}
		\begin{align*}
			&\quad\ \Big|\pd{}{\chi} \spr{\big(-\sin(\alpha-\sigma), \cos(\alpha-\sigma)\big)}{\gp - \smpp}\Big| \\
			&= \Big|\spr{\big(-\sin(\alpha-\sigma), \cos(\alpha-\sigma)\big)}{\pd{(\gp - \smpp)}{\chi}}\Big| \leq \Big\|\pd{(\gp - \smpp)}{\chi}\Big\| \leq \sqrt{M_\pp + M_\pp^2}
		\end{align*}
		
		We can now estimate the partial derivatives of~$\pf$.
		\begin{align*}
			\Big|\pd{f}{\alpha}\Big| &\leq 6 M_\pp (2 + M_\pp) + 2 (2 + M_\pp) (5 + M_\pp) = 20 + 26 M_\pp + 8 M_\pp^2 \\
			&\\
			\Big|\pd{f}{\sigma}\Big| &\leq 2 (2 + M_\pp) M_\pp + 2 (2 + M_\pp) (3 + M_\pp) = 12 + 14 M_\pp + 4 M_\pp^2 \\
			&\\
			\Big|\pd{f}{\pp}\Big| &\leq 3 M_\pp^2 + 2 M_\pp + 2 (2 + M_\pp) \frac{1 + 2m_\pp}{2 \sqrt{m_\pp + m_\pp^2}} M_\pp + (2 + M_\pp)^2 + 2 (2 + M_\pp) \frac{1 + 2m_\pp}{2 \sqrt{m_\pp + m_\pp^2}} \\
			&= 4 + 6 M_\pp + 4 M_\pp^2 + 2 (2 + M_\pp) (1 + M_\pp) \frac{1 + 2m_\pp}{2 \sqrt{m_\pp + m_\pp^2}} \\
			&\\
			\Big|\pd{f}{\chi}\Big| &\leq 2 (2 + M_\pp) \sqrt{M_\pp + M_\pp^2} \, M_\pp + 2 (2 + M_\pp) \sqrt{M_\pp + M_\pp^2} \\
			&= 2 (2 + M_\pp) (1 + M_\pp) \sqrt{M_\pp + M_\pp^2}
		\end{align*}
		
		Hence a Lipschitz coefficient for~$\pf$ on~$\ecs$ is
		\begin{multline*}
			\lc \dfeq 20 + 26 M_\pp + 8 M_\pp^2 + 12 + 14 M_\pp + 4 M_\pp^2 + 4 + 6 M_\pp + 4 M_\pp^2 \\
			\qquad + 2 (2 + M_\pp) (1 + M_\pp) \frac{1 + 2m_\pp}{2 \sqrt{m_\pp + m_\pp^2}} + 2 (2 + M_\pp) (1 + M_\pp) \sqrt{M_\pp + M_\pp^2}
		\end{multline*}
		\[= 36 + 46 M_\pp + 16 M_\pp^2 + 2 (2 + M_\pp) (1 + M_\pp) \Big(\frac{1 + 2m_\pp}{2 \sqrt{m_\pp + m_\pp^2}} + \sqrt{M_\pp + M_\pp^2}\Big)\]
		which is a little less than~$125$.
		
		The idea behind the program is that it accepts a value~$\delta \in \RR_{> 0}$, sets each of the variables $\alpha$, $\sigma$, $\pp$, $\chi$ at~$\delta$ away from the edge of~$\ecs$ and calculates the values of~$\pf$ in a lattice of points, of which any two consecutive ones differ in the values of the variables by~$2\delta$. The idea is that $\infty$-balls (cubes) with the centers in the lattice points and radius~$\delta$ cover~$\cs$, so if the values of~$\pf$ in these points is~$> \lc \, \delta$, then $\pf$ is positive.
		
		This requires two remarks, however. First, if one tries to evenly cover a cuboid by cubes with edge length~$2\delta$ and with centers within the cuboid, the cuboid will not be covered, if dividing any cuboid edge length by~$2\delta$ yields a remainder, greater than~$\delta$. For this reason, in the program we decrease each cube edge length slightly (by reducing the step of each variable) so that the now slightly distorted cubes exactly cover the cuboid enclosing~$\cs$ and~$\ecs$ if we take their centers from the lattice spanning the cuboid (though since we are trying to only cover~$\cs$, we do not need to take these centers from the entire cuboid).
		
		The second problem is that~$\cs$ is not actually a cuboid and might not get covered by the distorted cubes if we only took those with the centers in~$\cs$. However, we claim that the distorted cubes cover~$\cs$ if we take the centers from~$\ecs$, as long as $\delta$ is small enough.
		
		Recall Figure~\ref{figure:program-configuration-regions}; since the dependence of the lower bound for~$\pp$ on~$\alpha$ is increasing for both~$\cs$ and~$\ecs$, it suffices to check that if $\big(\alpha, \sigma, \pp, \chi\big) \in \cs$, then $\big(\min\set{\alpha + \delta, \arctan(\sqrt{M_\pp + M_\pp^2})}, \sigma, \max\set{\pp - \delta, m_\pp}, \chi\big) \in \ecs$.
		
		We have
		\[\arctan(\sqrt{M_\pp + M_\pp^2}) \leq \arctan(\sqrt{2}),\]
		\[\big|(\tan^2(\alpha))'\big| = \Big|\frac{2 \tan(\alpha)}{\cos^2(\alpha)}\Big| = \big|2 \tan(\alpha) (1 + \tan^2(\alpha))\big| \leq 6 \sqrt{2} \leq 9.\]
		Hence $\tan^2(\alpha)$ has a Lipschitz coefficient of~$9$ on the relevant region. Similarly, $\pp^2$ has a Lipschitz coefficient of~$2$.
		
		Then
		\[\tan^2(\alpha + \delta) \leq \tan^2(\alpha) + 9\delta \leq \pp + \pp^2 + 9\delta \leq 2(\pp - \delta) + (\pp - \delta)^2 - \pp + 2\delta + 2\delta + 9\delta =\]
		\[= 2(\pp - \delta) + (\pp - \delta)^2 - \pp + 13\delta \leq 2(\pp - \delta) + (\pp - \delta)^2\]
		for $\delta \in \intoc{0}{\frac{m_\pp}{13}}$. In particular, $\delta \in \intoc{0}{0.01}$ suffices, also in the cases where we hit the edges at~$\arctan(\sqrt{M_\pp + M_\pp^2})$ and/or~$m_\pp$ (we did not have to be too picky about these particular estimates; the actual value of~$\delta$ we run the program with is far smaller, at~$0.0004$, as we explain below).
		
		There is one more issue which prevents us from getting as nice of a result with a computer program as we would get with a theoretical derivation. Recall that we require $\hd < \sqrt{2\pp \big(\sqrt{\rch (\pp + 2\rch)} - \rch\big)} = \sqrt{2\pp \big(\sqrt{\pp + 2} - 1\big)}$. The program verifies that if $\sgp \notin \of{\smpp'}$, then $\gp \in \of$, where $\smpp$ is chosen within $\hd$-distance from~$\sgp$, hence the entire line segment from~$\gp$ to~$\sgp$ is in~$\of$. However, if we allow $\hd$ to get arbitrarily close to~$\sqrt{2\pp \big(\sqrt{\pp + 2} - 1\big)}$, then the value of~$\pf$ gets arbitrarily close to zero, and we cannot use our method to prove that~$\pf$ is positive. To avoid this, we decrease the upper bound on the distance between $\smpp$ and~$\sgp$ to $\sqrt{2\pp \big(\sqrt{\pp + 2} - 1\big) - \hd_\mathrm{off}}$ for $\hd_\mathrm{off} = 0.55$ (we chose this value experimentally, so that the result of the program was sufficiently good).
		
		After some experimentation, we ran the program with $\delta = 0.0004$. The resulting smallest value of~$\pf$ that the program returned, was~$0.068546$.
		
		Recall that~$\pf$ has a Lipschitz coefficient of~$125$. Since any possible configuration is at most~${\delta = 0.0004}$ away from some point in the lattice where the program calculates~$\pf$, the values that~$\pf$ can take are at most $125 \cdot 0.0004 = 0.05$ smaller than the values, calculated by the program. In particular, $\pf$ is necessarily positive.
		
		The source code of our c{\small++} program is available at~\url{https://people.math.ethz.ch/~skalisnik/ellipsoids.cpp}.
		
		The price of this method is that we had to decrease the size of the theoretical interval for the persistence parameter $\pp \in \intoo{\lppb}{\rch}$ which in particular requires greater density sample for the proof than is strictly necessary. We discuss this in Section~\ref{section:discussion}.
		
		Let us summarize the results we have obtained in this section. We have seen that if a point~$\gp \in \cn$ is in at least two of the closed ellipsoids, then there exists $\smpp \in \smp$ such that $\gp \in \cf$ and $\pr(\gp) \in \of$. This happened in one of two ways. The first closed ellipsoid we took~$\gp$ from could already satisfy this property, or we could find an ellipsoid with the center close to~$\pr(\gp)$ which contained both $\gp$ and~$\pr(\gp)$ in its interior. If we start with $\gp \in \on$ though, we can pick as the first ellipsoid one that has $\gp$ in its interior, which means that we can always conclude the statement of Lemma~\ref{lemma:program-conclusion}.

	\section{Construction of the Deformation Retraction}\label{section:deformation-retraction-construction}
	
		In this section we show that under the same assumptions on~$\rch$ and~$\pp$ as in the previous section, the union of the open ellipsoids around sample points deformation retracts onto the manifold~$\mnf$.
		
		Informally, the idea of the deformation retraction is as follows. For a point~$\gp$ in an open ellipsoid~$\of$, consider the closed ellipsoid~$\cf{\smpp}{q}$ where $q \dfeq \pep(\gp)$ (Definition~\ref{definition:depth-parameter}), the boundary of which contains~$\gp$. If the vector $\prv(\gp)$ points into the interior of this closed ellipsoid, we move in the direction of~$\prv(\gp)$, i.e.~we use the normal deformation retraction. Otherwise, we move in the direction of the projection of the vector $\prv(\gp)$ onto the tangent space $\ts[\bd{\cf{\smpp}{q}}]{\gp}$. This causes us to slide along the boundary~$\bd{\cf{\smpp}{q}}$. Either way, we remain within $\cf{\smpp}{q}$ (and therefore within~$\on$) and eventually reach the manifold~$\mnf$. This procedure is problematic for points which are in more than one ellipsoid, but we can glue together the directions of the deformation retraction with a suitable partition of unity. Figure~\ref{figure:deformation-retraction-sketch} illustrates this idea.
		\begin{figure}[!htp]
			\centering
			\includegraphics[width=0.99\textwidth]{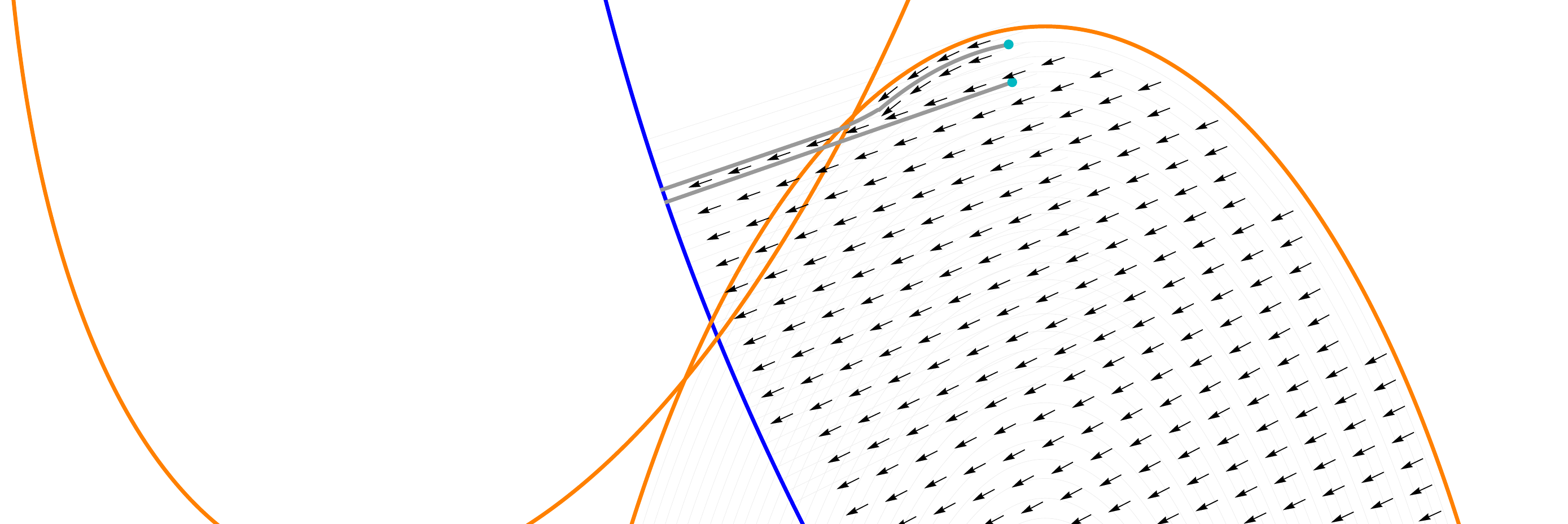}
			\caption{Idea for the deformation retraction}\label{figure:deformation-retraction-sketch}
		\end{figure}
		
		To make this work, we will need precise control over the partition of unity, which is the topic of Subsection~\ref{subsection:partition-of-unity}. Then in Subsection~\ref{subsection:vector-field} we define the vector field which gives directions, in which we deformation retract. Subsection~\ref{subsection:flow} proves that the flow of this vector field has desired properties. We then use this flow to explicitly give the definition of the requisite deformation retraction in Subsection~\ref{subsection:deformation-retraction}.
		
		\subsection{The Partition of Unity}\label{subsection:partition-of-unity}
		
			For each $\smpp \in \smp$ define
			\[\se \dfeq \on \setminus \bigcup_{\smpp' \in \smp \setminus \set{\smpp}} \of{\smpp'}, \qquad\qquad \de \dfeq \on \setminus \of.\]
			The sets $\se$ and~$\de$ are closed in~$\on$ because they are complements within~$\on$ of open sets. Note that $\se \subseteq \of = \on \setminus \de$. In particular, $\se$ and~$\de$ are disjoint.
			
			\begin{proposition}
				If $\smpp', \smpp'' \in \smp$ and $\smpp' \neq \smpp''$, then $\se[\smpp'] \subseteq \de[\smpp'']$ and $\se[\smpp'] \cap \se[\smpp''] = \emptyset$.
			\end{proposition}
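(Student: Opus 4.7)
The proof is a direct unpacking of the definitions, so I would keep it short rather than search for a clever argument.

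The plan is to first establish $\se[\smpp'] \subseteq \de[\smpp'']$ by expanding the definition of $\se[\smpp']$: any $\gp \in \se[\smpp']$ lies in $\on$ and avoids every $\of{\smpp}$ with $\smpp \neq \smpp'$. Since $\smpp'' \neq \smpp'$, applying this with $\smpp = \smpp''$ gives $\gp \notin \of{\smpp''}$, i.e.\ $\gp \in \on \setminus \of{\smpp''} = \de[\smpp'']$.

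For the disjointness claim, I would combine the inclusion just proved with the already-noted observation that $\se[\smpp'']$ and $\de[\smpp'']$ are disjoint (because $\se[\smpp''] \subseteq \of{\smpp''}$ while $\de[\smpp''] = \on \setminus \of{\smpp''}$). Intersecting the inclusion $\se[\smpp'] \subseteq \de[\smpp'']$ with $\se[\smpp'']$ then gives
\[\se[\smpp'] \cap \se[\smpp''] \subseteq \de[\smpp''] \cap \se[\smpp''] = \emptyset.\]

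There is no real obstacle here; the proposition is essentially a bookkeeping lemma that records the separation properties of the family $(\se[\smpp])_{\smpp \in \smp}$ needed for the partition of unity construction in the next subsection. The only thing to be careful about is distinguishing the roles of $\smpp'$ and $\smpp''$ in the definition of $\se[\smpp']$ (one sample point is singled out, all others are excluded), and not confusing $\se$ with $\de$ when invoking the already-established disjointness $\se[\smpp] \cap \de[\smpp] = \emptyset$.
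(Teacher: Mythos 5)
Your proposal is correct and follows exactly the paper's own argument: unpack the definition of $\se[\smpp']$ to get $\gp \notin \of{\smpp''}$, hence $\se[\smpp'] \subseteq \de[\smpp'']$, and then intersect with $\se[\smpp'']$ and invoke the previously noted disjointness $\se[\smpp''] \cap \de[\smpp''] = \emptyset$. Nothing is missing.
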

			
			\begin{proof}
				For any $\gp \in \on$, if $\gp \in \se[\smpp']$, then $\gp \notin \of{\smpp''}$, so $\gp \in \de[\smpp'']$. Consequently $\se[\smpp'] \cap \se[\smpp''] \subseteq \de[\smpp''] \cap \se[\smpp''] = \emptyset$.
			\end{proof}
			
			The only way $\de$ could be empty is if the sample~$\smp$ is a singleton which can only happen when $\mnf$ is a singleton, but this possibility is excluded by the assumption that the dimension of~$\mnf$ is positive. The distance to any non-empty set is a well-defined real-valued function, the zeroes of which form the closure of the set.
			
			Define
			\begin{align*}
				\tse &\dfeq \begin{cases} \set{\gp \in \on}{\dst(\se, \gp) \leq \tfrac{1}{2} \dst(\de, \gp)} & \text{if~$\se \neq \emptyset$,} \\ \emptyset & \text{if~$\se = \emptyset$,} \end{cases} \\
				\tde &\dfeq \begin{cases} \set{\gp \in \on}{\dst(\de, \gp) \leq \tfrac{3}{2} \dst(\se, \gp)} & \text{if~$\se \neq \emptyset$,} \\ \on & \text{if~$\se = \emptyset$.} \end{cases}
			\end{align*}
			
			The sets $\tse$ and~$\tde$ are disjoint. If we had $\gp \in \tse \cap \tde$, then $\dst(\se, \gp) \leq \tfrac{1}{2} \dst(\de, \gp) \leq \tfrac{3}{4} \dst(\se, \gp)$, so $\dst(\se, \gp) = \dst(\de, \gp) = 0$, meaning $\gp \in \se \cap \de$, a contradiction. Note also that $\de \subseteq \tde$ and
			\[\se \ \subseteq \ \tse \ \subseteq \ \on \setminus \tde \ \subseteq \ \on \setminus \de \ = \ \of.\]
			
			The sets $\tse$ and~$\tde$ are closed in~$\on$ because they are (empty or) preimages of~$\RR_{\geq 0}$ under continuous maps ${\gp \mapsto \tfrac{1}{2} \dst(\de, \gp) - \dst(\se, \gp)}$ and ${\gp \mapsto \tfrac{3}{2} \dst(\se, \gp) - \dst(\de, \gp)}$. Using the smooth version of Urysohn's lemma~\cite{lee2013smooth}, choose a smooth function $\pu\colon \on \to \intcc{0}{1}$ such that $\pu$ is constantly~$1$ on~$\tse$ and constantly~$0$ on~$\tde$.
			
			Recall that the \df{support} $\supp{f}$ of a continuous real-valued function~$f$ is defined as the closure of the complement of the zero set, where both the complementation and the closure are calculated in the domain of~$f$.
			
			\begin{proposition}
				For every $\smpp \in \smp$ and $\gp \in \on$, if $\gp \in \supp{\pu}$, then $\on \setminus \tde, \se \neq \emptyset$, $\dst(\tde, \gp) \geq \tfrac{3}{2} \dst(\se, \gp)$ and $\gp \in \of$.
			\end{proposition}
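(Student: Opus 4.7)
The approach rests on unpacking the definition: $\supp{\pu}$ is the closure in $\on$ of $\{x \in \on : \pu(x) \neq 0\}$, and since $\pu$ is built via the smooth Urysohn lemma to vanish identically on $\tde$, every point of positivity lies in $\on \setminus \tde$. Hence every element of $\supp{\pu}$ is a limit of points that sit outside $\tde$, and all four conclusions will follow by exploiting this single observation.

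I would first dispatch the two non-emptiness claims simultaneously by contrapositive. If $\se = \emptyset$, then by the paper's definition $\tde = \on$, while if $\on \setminus \tde = \emptyset$ then already $\tde = \on$; in either case $\pu \equiv 0$ on $\on$, so $\supp{\pu} = \emptyset$, contradicting $\gp \in \supp{\pu}$. Both $\se$ and $\on \setminus \tde$ must therefore be non-empty.

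For the distance inequality, the plan is to choose a sequence $\gp_n \to \gp$ with $\pu(\gp_n) > 0$ (such a sequence exists by the definition of support), observe that each $\gp_n$ lies in $\on \setminus \tde$ and so satisfies the strict defining inequality $\dst(\de, \gp_n) > \tfrac{3}{2} \dst(\se, \gp_n)$, and then pass to the limit using continuity of the distance functions $\dst(\de, \cdot)$ and $\dst(\se, \cdot)$. This directly yields the non-strict bound at the limit point $\gp$; to obtain the stated form with $\tde$ in place of $\de$ I would, at the time of invoking Urysohn, arrange that the support of $\pu$ be kept uniformly off $\tde$, for instance by separating a pair of slightly thickened versions of $\tse$ and $\tde$ obtained by shrinking the ratios $\tfrac{1}{2}$ and $\tfrac{3}{2}$ to leave a quantitative margin.

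Finally, $\gp \in \of$ falls out of the inequality together with the elementary observation that $\se \cap \de = \emptyset$: indeed, $\se \subseteq \of$ while $\de = \on \setminus \of$. Assuming $\gp \in \de$ would give $\dst(\de, \gp) = 0$, which by the inequality forces $\dst(\se, \gp) = 0$, hence $\gp \in \se$ by closedness of $\se$, contradicting disjointness. I expect the main delicate point to be exactly the quantitative bound, since this is the one place where the particular choice of Urysohn function matters rather than only its existence.
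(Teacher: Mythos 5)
Your treatment of the two non-emptiness claims and of $\gp \in \of$ coincides with the paper's, and your sequence-and-limit argument for the inequality is the same mechanism the paper uses (phrased there as: the set $\set{\sgp \in \on}{\dst(\de, \sgp) \geq \tfrac{3}{2}\dst(\se, \sgp)}$ is closed in~$\on$ and contains $\pu^{-1}\big((0,1]\big)$, hence contains $\supp{\pu}$). The point at which you hesitated is exactly the point at which the statement (and the paper's own proof of it) is defective: knowing $\pu(\sgp) > 0$ only tells you $\sgp \notin \tde$, i.e.\ $\dst(\de, \sgp) > \tfrac{3}{2}\dst(\se, \sgp)$; it gives no control on $\dst(\tde, \sgp)$, which measures closeness to the \emph{set} $\tde$ and can be tiny at points far from~$\se$. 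So what your limit argument proves, and what the paper's closed-set argument actually proves, is the $\de$-form of the inequality; the ``$\tde$'' in the displayed conclusion should be read as~$\de$. This is harmless for the rest of the paper: the later uses (the conclusion $\gp \in \of$ here, and Proposition~\ref{proposition:supports-pairwise-disjoint}, which needs $\dst(\se, \gp) \leq \tfrac{2}{3}\dst(\de, \gp)$ on the support) only require the $\de$-form.

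Where your proposal goes astray is the suggested repair. No admissible re-choice of the Urysohn function can rescue the literal $\tde$-form with the constant $\tfrac{3}{2}$: since $\pu \equiv 1$ on~$\tse$, we always have $\tse \subseteq \supp{\pu}$, and already for two overlapping ellipsoids there are points $\gp \in \tse$ with $\dst(\se, \gp) = \tfrac{1}{2}\dst(\de, \gp) > 0$ lying within roughly $\tfrac{1}{5}\dst(\se, \gp)$ of~$\tde$ (moving from the ratio $\tfrac{1}{2}$ to the ratio $\tfrac{2}{3}$ costs only a short displacement), so $\dst(\tde, \gp) < \tfrac{3}{2}\dst(\se, \gp)$ there no matter how $\pu$ is chosen. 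Shrinking the ratio $\tfrac{1}{2}$ in the definition of $\tse$, as you propose, is not benign either: the later argument (the $\sea$ case in Lemma~\ref{lemma:flow-contained}) relies on $\pu$ being~$1$ wherever $\dst(\se, \gp) < \tfrac{1}{2}\dst(\de, \gp)$. So the correct move is not to strengthen the construction but to correct the statement to the $\de$-form, which your argument already establishes.
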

			
			\begin{proof}
				Since $\gp \in \supp{\pu}$, the support of~$\pu$ is non-empty, so $\tde \neq \on$, therefore $\se \neq \emptyset$. The set $\set[1]{\sgp \in \on}{\dst(\tde, \gp) \geq \tfrac{3}{2} \dst(\se, \gp)}$ is closed in~$\on$ and contains~$\pu^{-1}(\intoc{0}{1})$, so it contains~$\supp{\pu}$.
				
				If $\gp \in \se$, then $\gp \in \of$. If $\gp \notin \se$, then $\dst(\de, \gp) \geq \dst(\tde, \gp) \geq \tfrac{3}{2} \dst(\se, \gp) > 0$, so again $\gp \in \of$.
			\end{proof}
			
			\begin{proposition}\label{proposition:supports-pairwise-disjoint}
				The supports\footnote{Recall that a function~$\pu$ is defined on~$\on$, so when calculating its support, we take the closure within~$\on$. This support is in general not closed in~$\RR^\ad$.} of functions~$\pu$ are pairwise disjoint. Hence every point in~$\on$ has a neighbourhood which intersects the support of at most one~$\pu$.
			\end{proposition}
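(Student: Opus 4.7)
The plan is to prove pairwise disjointness by contradiction. The preceding proposition supplies, on the support of each $\pu$, the distance estimate $\dst(\tde, \gp) \geq \tfrac{3}{2}\dst(\se, \gp)$; the earlier proposition in this subsection supplies $\se[\smpp'] \subseteq \de[\smpp'']$ (and symmetrically) for distinct sample points. I will chain these into a self-defeating inequality that forces $\gp$ into the empty intersection $\se[\smpp'] \cap \se[\smpp'']$. The second assertion will then follow from pairwise disjointness combined with local finiteness of $\smp$ in $\RR^\ad$ and uniform boundedness of the ellipsoids.

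For the chain, I will suppose $\gp \in \supp{\pu[\smpp']} \cap \supp{\pu[\smpp'']}$ with $\smpp' \neq \smpp''$. From $\se[\smpp'] \subseteq \de[\smpp''] \subseteq \tde[\smpp'']$ --- the first inclusion from the earlier proposition, the second immediate from the definition of $\tde[\smpp'']$ (any $\sgp \in \de[\smpp'']$ has $\dst(\de[\smpp''], \sgp) = 0$) --- monotonicity of point-to-set distance gives $\dst(\tde[\smpp''], \gp) \leq \dst(\se[\smpp'], \gp)$, and symmetrically $\dst(\tde[\smpp'], \gp) \leq \dst(\se[\smpp''], \gp)$. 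Combining with the two estimates from the preceding proposition, I obtain
\[\tfrac{3}{2}\dst(\se[\smpp'], \gp) \leq \dst(\tde[\smpp'], \gp) \leq \dst(\se[\smpp''], \gp) \leq \tfrac{2}{3}\dst(\tde[\smpp''], \gp) \leq \tfrac{2}{3}\dst(\se[\smpp'], \gp),\]
which forces $\dst(\se[\smpp'], \gp) = 0$, and symmetrically $\dst(\se[\smpp''], \gp) = 0$. Since $\se[\smpp']$ and $\se[\smpp'']$ are closed in $\on$, this puts $\gp$ in $\se[\smpp'] \cap \se[\smpp'']$, contradicting the disjointness supplied by the earlier proposition.

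For the local-finiteness assertion, note that $\supp{\pu} \subseteq \of$ by the preceding proposition, and all tangent-normal $\pp$-ellipsoids are congruent (same semi-axis lengths), hence of uniformly bounded diameter. Given $\gp \in \on$, I pick a bounded open neighbourhood $U$ of $\gp$; ellipsoids that meet $U$ have centers in a bounded enlargement of $U$, and local finiteness of $\smp$ in $\RR^\ad$ restricts these centers to finitely many indices $\smpp_1, \ldots, \smpp_k$. By pairwise disjointness, $\gp$ lies in at most one of $\supp{\pu[\smpp_1]}, \ldots, \supp{\pu[\smpp_k]}$. I then intersect $U$ with the open (in $\on$) complements of the remaining finitely many supports --- each closed in $\on$ and disjoint from $\{\gp\}$ --- to obtain a neighbourhood of $\gp$ meeting at most one support.

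There is no serious obstacle: the argument amounts to bookkeeping on the distance chain plus a routine local-finiteness packaging. The only subtle point is that in the chain one needs $\se[\smpp'], \se[\smpp''] \neq \emptyset$ for the distances to the $\tde$'s to be meaningfully bounded below, but this is automatic from the preceding proposition whenever $\gp$ lies in the relevant support.
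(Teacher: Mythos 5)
Your proof is correct and takes essentially the same route as the paper's: the same two-sided chain of distance inequalities (the paper runs it through $\de[\smpp'']$ rather than $\tde[\smpp'']$, but since $\se[\smpp'] \subseteq \de[\smpp''] \subseteq \tde[\smpp'']$ the two chains are interchangeable and both force $\dst(\se[\smpp'], \gp) = \dst(\se[\smpp''], \gp) = 0$, hence $\gp \in \se[\smpp'] \cap \se[\smpp'']$, a contradiction). Your treatment of the second claim likewise matches the paper's, using $\supp{\pu} \subseteq \of$ together with local finiteness of the ellipsoid family to shrink a neighbourhood away from all but one support.
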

			
			\begin{proof}
				Take $\smpp', \smpp'' \in \smp$, $\smpp' \neq \smpp''$ and let $\gp \in \pu[\smpp'] \cap \pu[\smpp'']$. Then $\dst(\se[\smpp'], \gp) \leq \tfrac{2}{3} \dst(\de[\smpp'], \gp) \leq \tfrac{2}{3} \dst(\se[\smpp''], \gp)$ and likewise $\dst(\se[\smpp''], \gp) \leq \tfrac{2}{3} \dst(\se[\smpp'], \gp)$, implying $\dst(\se[\smpp'], \gp) = \dst(\se[\smpp''], \gp) = 0$, so $\gp \in \se[\smpp'] \cap \se[\smpp'']$, a contradiction.
				
				Since $\supp{\pu} \subseteq \of$ for all~$\smpp \in \smp$, the family of supports of~$\pu$ is also locally finite. Thus any $\gp \in \on$ has a neighbourhood $\st{U} \subseteq \on$ which intersects only finitely many supports, at most one of which contains~$\gp$. The intersection of the complements of the rest with the set~$\st{U}$ is a neighbourhood of~$\gp$ which intersects at most one support.
			\end{proof}
			
			From these results we can conclude that $\gp \mapsto \sum_{\smpp \in \smp} \pu(\gp)$ gives a well-defined smooth map $\on \to \intcc{0}{1}$. We may therefore define a smooth map $\pun\colon \of \to \intcc{0}{1}$,
			\[\pun(\gp) \dfeq 1 - \sum_{\smpp \in \smp} \pu(\gp).\]
			Thus the family of maps~$\pu$, $\smpp \in \smp$, together with~$\pun$, forms a smooth partition of unity on~$\on$.
			
			We will need two more subsets of~$\on$:
			\begin{align*}
				\sea &\dfeq \set[1]{\gp \in \on}{\some{\smpp \in \smp}{\se \neq \emptyset \land \dst(\se, \gp) < \tfrac{1}{2} \dst(\de, \gp)}}, \\
				\npa &\dfeq \set[1]{\gp \in \on}{\some{\smpp \in \smp}{\gp \in \of \land \pr(\gp) \in \of}}.
			\end{align*}
			
			\begin{lemma}\label{lemma:open-cover-of-ellipsoid-area}
				The sets $\sea$ and~$\npa$ are open in~$\on$ and in~$\RR^\ad$, and $\sea \cup \npa = \on$.
			\end{lemma}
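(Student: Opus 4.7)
The plan is to prove the three assertions separately. Openness in $\on$ will automatically imply openness in $\RR^\ad$, because $\on$ is a union of open ellipsoids and hence open in $\RR^\ad$. For $\sea$, I would write it as a union over $\smpp \in \smp$ with $\se \neq \emptyset$ of the sets $U_\smpp \dfeq \set[1]{\gp \in \on}{\dst(\se, \gp) < \tfrac{1}{2}\dst(\de, \gp)}$. The function $\gp \mapsto \tfrac{1}{2}\dst(\de, \gp) - \dst(\se, \gp)$ is continuous on $\RR^\ad$ (distance to a non-empty set being $1$-Lipschitz, with $\de \neq \emptyset$ since $\smp$ has at least two points --- which follows from the assumptions $0 < \md$ and $\mnf \neq \emptyset$ together with $\rch < \infty$), so each $U_\smpp$ is open. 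A union of open sets is open.

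For $\npa$, the key input is the continuity of the closest-point projection. By Corollary~\ref{corollary:persistence-parameter-bounds} we have $\on \subseteq \ma^\complement$, and in fact $\on \subseteq \otnm{\rch}$, so Lemma~\ref{lemma:continuity-of-closest-point-projection} applies and $\pr$ is continuous on $\on$. For each $\smpp$, the set $\of \cap \pr^{-1}(\of)$ is the intersection of two open subsets of $\on$, hence open in $\on$, and $\npa$ is the union of these.

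The only nontrivial step is the equality $\sea \cup \npa = \on$, which is the key reason we needed the computer-assisted result of the previous section. Take any $\gp \in \on \setminus \npa$; I would argue that $\gp$ must then be covered by a \emph{unique} open ellipsoid. Indeed, if $\gp$ lay in two distinct open ellipsoids $\of{\smpp'}$ and $\of{\smpp''}$, Lemma~\ref{lemma:program-conclusion} would supply some $\smpp$ with $\gp, \pr(\gp) \in \of$, contradicting $\gp \notin \npa$. So there is a unique $\smpp \in \smp$ with $\gp \in \of$; consequently $\gp \in \se$, whence $\se \neq \emptyset$ and $\dst(\se, \gp) = 0$. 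Since $\gp$ lies in the open set $\of$, it is not in the closed set $\de$, so $\dst(\de, \gp) > 0$, and therefore $\gp \in \sea$. The main conceptual obstacle is this last step; once Lemma~\ref{lemma:program-conclusion} is available, the remaining work is a straightforward continuity-and-distance argument.
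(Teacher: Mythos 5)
Your proposal is correct and follows essentially the same route as the paper: openness of $\sea$ and~$\npa$ via preimages of open sets under the continuous distance functions and the projection~$\pr$, and the covering equality via Lemma~\ref{lemma:program-conclusion}. The only (immaterial) difference is the direction of the set-theoretic argument --- you show $\on \setminus \npa \subseteq \sea$ using that a point outside~$\npa$ lies in a unique ellipsoid and hence in some~$\se$ with $\dst(\de, \gp) > 0$, whereas the paper shows $\on \setminus \sea \subseteq \npa$ using the disjointness of $\se$ and~$\de$; both hinge on exactly the same ingredients.
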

			
			\begin{proof}
				The given sets are open in~$\on$ since $\sea = \bigcup_{\smpp \in \smp, \se \neq \emptyset} \big(\gp \mapsto \tfrac{1}{2} \dst(\de, \gp) - \dst(\se, \gp)\big)^{-1}(\RR_{> 0})$ and $\npa = \bigcup_{\smpp \in \smp} \of \cap \pr^{-1}\big(\of\big)$. As $\on$ is open in~$\RR^\ad$, they are also open in~$\RR^\ad$.
				
				Assume that $\gp \in \on \setminus \sea$. If $\gp$ was in any~$\se$, we would have $0 = \dst(\se, \gp) \geq \tfrac{1}{2} \dst(\de, \gp)$, so $\gp \in \se \cap \de$, a contradiction. Since $\gp$ is in no~$\se$, it must be in at least two~$\of$, so $\gp \in \npa$ by Lemma~\ref{lemma:program-conclusion}.
			\end{proof}
		
		\subsection{The Velocity Vector Field}\label{subsection:vector-field}
		
			Let us define for each $\smpp \in \smp$ the vector field $\lavf\colon \of \to \RR^\ad$ as follows. Given $\gp \in \of$, let $\hs$ denote the $\ad$-dimensional closed half-space which is bounded by the hyperplane~$\ts[\bd{\cf{\smpp}{\pep(\gp)}}]{\gp}$ and which contains $\cf{\smpp}{\pep(\gp)}$. Define $\lavf(\gp)$ to be the projection of the vector $\prv(\gp)$ to the closest point in~$\hs$. Explicitly, if we introduce any orthonormal coordinate system with the origin in~$\gp$ such that the last coordinate axis points orthogonally to~$\bd{\cf{\smpp}{\pep(\gp)}}$ into the interior of~$\cf{\smpp}{\pep(\gp)}$, then the projection in these coordinates is given by $(x_1, \ldots, x_{n-1}, x_n) \mapsto (x_1, \ldots, x_{n-1}, \max\set{x_n, 0})$.
			
			\begin{proposition}\label{proposition:local-vector-field-lipschitz}
				The vector field $\lavf\colon \of \to \RR^\ad$ is Lipschitz with a bound on a Lipschitz coefficient independent from~$\smpp$.
			\end{proposition}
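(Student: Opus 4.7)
The plan is to write $\lavf$ in closed form and reduce its Lipschitz property to Lemma~\ref{lemma:lipschitz} applied to bounded Lipschitz pieces. Using the explicit formula for metric projection onto the half-space,
\[
\lavf(\gp)\;=\;\prv(\gp)\;-\;\max\{0,\spr{\prv(\gp)}{\vec{n}(\gp)}\}\,\vec{n}(\gp),
\]
where $\vec{n}(\gp)$ is the outward unit normal to $\bd{\cf{\smpp}{\pep(\gp)}}$ at $\gp$. In a tangent-normal frame at $\smpp$ this $\vec{n}$ is parallel to $\vec{m}(\gp)\dfeq(\gp-\smpp)_T/a^2+(\gp-\smpp)_N/b^2$ with $a^2=\rch\pep(\gp)+\pep(\gp)^2$, $b^2=\pep(\gp)^2$. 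Lemma~\ref{lemma:distance-to-manifold} gives $\of\subseteq\ctnm{\pp}\subset\otnm{\rch}$, so Lemma~\ref{lemma:continuity-of-closest-point-projection} makes $\prv$ bounded Lipschitz on $\of$ with constants depending only on $\rch$ and $\pp$; since $\smpp$ enters only through the translation $\gp-\smpp$, the constants we produce are automatically independent of $\smpp$. The task reduces to giving analogous bounds for the outward correction $M(\gp)\dfeq\max\{0,\spr{\prv(\gp)}{\vec{n}(\gp)}\}\,\vec{n}(\gp)$.

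My first substantive step is to show that $\pep\colon\of\to\RR_{\geq 0}$ is Lipschitz. With $\hat{F}(\gp,q)\dfeq\|(\gp-\smpp)_T\|^2/(\rch q+q^2)+\|(\gp-\smpp)_N\|^2/q^2-1$ as in Definition~\ref{definition:depth-parameter}, the implicit function theorem on $\of\setminus\{\smpp\}$ (where $\partial_q\hat F<0$) gives $\nabla\pep=-\nabla_\gp\hat F/\partial_q\hat F$; reparametrizing via the constraint as $\|(\gp-\smpp)_T\|=a\cos\varphi$, $\|(\gp-\smpp)_N\|=b\sin\varphi$ yields a uniform bound $\|\nabla\pep\|\leq K(\rch,\pp)$, and continuity extends the bound to $\of$. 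Consequently $a^2$, $b^2$, and $W(\gp)\dfeq b^2(\gp-\smpp)_T+a^2(\gp-\smpp)_N=a^2b^2\vec{m}(\gp)$ are Lipschitz, and clearing denominators via the identity $\|W(\gp)\|^2=a^4b^4\|\vec{m}(\gp)\|^2=a^2b^2(a^2+b^2-\|\gp-\smpp\|^2)$ rewrites
\[
M(\gp)\;=\;\frac{\max\{0,\spr{\prv(\gp)}{W(\gp)}\}\,W(\gp)}{\|W(\gp)\|^2},
\]
a quotient whose numerator is a product of Lipschitz quantities polynomial in $\gp-\smpp$ and $\pep(\gp)$.

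The main obstacle is that $\|W\|^2$, although strictly positive on $\of\setminus\{\smpp\}$, vanishes at $\smpp$, so Lemma~\ref{lemma:lipschitz} does not apply in one shot. I would close the argument with a two-regime estimate based on distance from $\smpp$. Outside a ball $\ob{\smpp}{\delta}$ for a suitable $\delta=\delta(\rch,\pp)>0$, the denominator $\|W(\gp)\|^2$ is bounded below by a positive constant depending only on $\rch,\pp$ (since $\pep(\gp)\geq \|\gp-\smpp\|^2/(3\rch)$ there, whence $\|W\|^2\geq\rch\pep^5$), and Lemma~\ref{lemma:lipschitz} together with the $1$-Lipschitz property of $s\mapsto\max\{0,s\}$ produces a Lipschitz coefficient for $M$. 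Inside the ball, I would combine the metric-projection inequality $\|M(\gp)\|\leq\|\prv(\gp)\|\leq C\|\gp-\smpp\|$ (which follows from $\prv(\smpp)=0$ and Lipschitz continuity of $\prv$) with a case split on a pair $\gp_1,\gp_2$: if $\max_i\|\gp_i-\smpp\|$ is at most a constant multiple of $\|\gp_1-\gp_2\|$, the triangle inequality applied to the pointwise bound suffices; otherwise the chord $\overline{\gp_1\gp_2}$ is bounded away from $\smpp$ by a constant multiple of $\max_i\|\gp_i-\smpp\|$, which brings the pair back under the first regime's estimate. Gluing these two estimates produces a single Lipschitz coefficient for $M$, and hence for $\lavf$, depending only on $\rch$ and $\pp$, as required.
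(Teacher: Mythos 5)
Your reduction is sound up to the final patching step, and it is worth noting that it is a genuinely different route from the paper's proof, which simply composes the $1$-Lipschitz projection onto a half-space with the Lipschitz map~$\prv$; your decomposition $\lavf=\prv-M$ with $M(\gp)=\max\{0,\spr{\prv(\gp)}{\vec{n}(\gp)}\}\,\vec{n}(\gp)$ correctly isolates what that short argument glosses over, namely that the half-space varies with~$\gp$. The ingredients you assemble are correct: the identity $\|W\|^2=a^2b^2\bigl(a^2+b^2-\|\gp-\smpp\|^2\bigr)$ checks out, $\pep$ is Lipschitz on~$\of$ with a constant independent of~$\smpp$ (the computation you sketch indeed yields a uniform bound, in fact $\|\nabla\pep\|\le 1$), and on $\of\setminus\ob{\smpp}{\delta}$ the bounds $\pep\ge\|\gp-\smpp\|^2/(3\rch)$ and $\|W\|^2\ge\rch\,\pep^5$ give a positive lower bound for the denominator, so Lemma~\ref{lemma:lipschitz} produces a Lipschitz constant there depending only on $\rch,\pp,\delta$.

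The gap is in the second case of your inner-regime split (pairs much closer to each other than to~$\smpp$). There you conclude that the chord between $\gp_1$ and~$\gp_2$ stays at distance at least $(1-1/K)\,r$ from~$\smpp$, where $r=\max_i\|\gp_i-\smpp\|$, and then invoke ``the first regime's estimate''. But that estimate is tied to the fixed radius~$\delta$: its constant comes from the lower bound on $\|W\|^2$, which at distance $r$ from~$\smpp$ is only of order~$r^{10}$, and the quotient bound of Lemma~\ref{lemma:lipschitz} then returns a constant growing like a negative power of~$r$. Since in this case $r$ can be arbitrarily small, what your argument actually yields is $\|M(\gp_1)-M(\gp_2)\|\le L(r)\,\|\gp_1-\gp_2\|$ with $L(r)\to\infty$ as $r\to 0$; the extra information $\|\gp_1-\gp_2\|<r/K$ does not cancel this. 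Concretely, the uncontrolled term is $\max\{0,\spr{\prv}{\vec{n}}\}\cdot\nabla\vec{n}$: for $\gp$ near the tip of its level ellipsoid at distance~$r$ from~$\smpp$ one has $\pep(\gp)\sim r^2/\rch$, hence $\|W\|\sim r^5/\rch^2$ while $\|\nabla W\|\sim r^2$, so your estimates only give $\|\nabla\vec{n}\|\lesssim\rch^2/r^3$, and with $\max\{0,\spr{\prv}{\vec{n}}\}\le\dst(\gp,\mnf)\le\pep(\gp)$ the resulting local Lipschitz bound is of order $\rch/r$, not $O(1)$. Closing the proof along your lines requires a scale-uniform estimate on annuli around~$\smpp$ --- i.e.\ a geometric argument (using the reach, e.g.\ the bound on the distance of $\pr(\gp)$ from the tangent plane at~$\smpp$) showing that the outward component $\spr{\prv}{\vec{n}}$ is forced to be small precisely where $\vec{n}$ turns fast --- and nothing in the proposal supplies it.
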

			
			\begin{proof}
				The projection onto a half-space is $1$-Lipschitz. By setting $\rch = 1$ and $r = \pp$ in Lemma~\ref{lemma:continuity-of-closest-point-projection}, we see that the map~$\prv$ is $(\frac{1}{1-\pp} + 1)$-Lipschitz on $\of \subseteq \ctnm{1-\pp}$. As the composition of these two maps, the vector field~$\lavf$ is Lipschitz with the product Lipschitz coefficient, i.e.~also~$\frac{1}{1-\pp} + 1$.
			\end{proof}
			
			For any $\smpp\in \smp$ and $\gp \in \of \setminus{\mnf}$ let $\pa$ denote the angle between the vectors $\prv(\gp)$ and~$\lavf(\gp)$, and let $\nhl$ denote the closed half-line which starts at~$\gp$, is orthogonal to~$\bd\cf{\smpp}{\pep(\gp)}$ and points into the exterior of~$\cf{\smpp}{\pep(\gp)}$.
			
			\begin{lemma}\label{lemma:normal-projection-away-from-normal-space-of-ellipsoid}
				Let $\smpp \in \smp$ and $\gp \in \of \setminus{\mnf}$. Then $\pr(\gp) \notin \nhl$; in fact, the angle between $\prv(\gp)$ and~$\nhl$ is bounded from below by~$\arccot(\tfrac{1}{\sqrt{2}})$. Hence $0 \leq \pa \leq \arccos\big(\sqrt{\frac{2}{3}}\big)$; in particular $\cos(\pa) \geq \sqrt{\frac{2}{3}}$.
			\end{lemma}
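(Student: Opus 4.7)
The plan is to translate the claim $\cos\pa \geq \sqrt{2/3}$ into an upper bound $\cos\beta \leq 1/\sqrt{3}$ on the angle $\beta$ between $\prv(\gp)$ and the outward normal $\hat{n}$ to the ellipsoid $\bd\cf{\smpp}{\pep(\gp)}$ at $\gp$, and then to verify this bound by a 2-dimensional calculation. From the definition of $\lavf(\gp)$ as the closest-point projection of $\prv(\gp)$ onto $\hs$, either $\prv(\gp) \in \hs$---in which case $\lavf = \prv$, $\pa = 0$ and the claim is trivial---or else $\lavf$ deletes the outward-normal component of $\prv$, giving $\pa = \pi/2 - \beta$. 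Thus $\cos\pa \geq \sqrt{2/3}$ is equivalent to $\cos\beta \leq 1/\sqrt{3}$, which a fortiori forces $\pr(\gp) \notin \nhl$ whenever $\prv(\gp) \neq 0$.

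By Lemma~\ref{lemma:general-manifold-properties}(1,2), choose a planar tangent-normal coordinate system $\Pi$ at $\smpp$ containing both $\gp$ and $\hat{n}$. Since $\hat{n} \in \Pi$, any component of $\pr(\gp) - \gp$ orthogonal to $\Pi$ enlarges $\|\prv(\gp)\|$ without affecting $\prv(\gp)\cdot\hat{n}$, so only shrinks $\cos\beta$; the worst case therefore has $\pr(\gp) \in \Pi$ and we may work entirely in 2D. Placing $\smpp$ at the origin in $\Pi$ with tangent horizontal and normal vertical, $\gp = (\gp_T, \gp_N)$ satisfies $\gp_T^2/(q+q^2) + \gp_N^2/q^2 = 1$ for $q := \pep(\gp)$, and $\hat{n}$ is proportional to $(\gp_T/(q+q^2),\gp_N/q^2)$.

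Setting $L := \|\prv(\gp)\|$ and $\hat{v} := \prv(\gp)/L$, Lemma~\ref{lemma:distance-to-manifold} gives $L \leq q$. Because $\smpp \in \mnf$ lies outside the open associated $\rch$-ball at $\pr(\gp)$ centered at $\pr(\gp) - \hat{v}$ (with $\rch = 1$), expansion yields the key inequality
\[\|\gp\|^2 - 2(1-L)\,\gp \cdot \hat{v} + (1-L)^2 \;\geq\; 1.\]
Likewise, requiring $\pr(\gp) = \gp + L\hat{v}$ to lie outside the two open associated $\rch$-balls at $\smpp$ (centered at $\smpp \pm \rch\hat{e}_N$ along the normal direction) provides two further inequalities. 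Decomposing $\hat{v} = \cos\beta\,\hat{n} + \sin\beta\,\hat{t}$ with $\hat{t} \perp \hat{n}$ a unit tangent to the ellipse at $\gp$, all of these become algebraic inequalities in $\cos\beta$, $\sin\beta$, $L$ and the position of $\gp$ on the ellipse.

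The hardest step will be the concluding algebraic argument, which must show that these constraints are never jointly satisfied when $\cos\beta > 1/\sqrt{3}$. I would proceed by case analysis on where $\gp$ lies on the ellipse---from the normal-axis tip, where the bound follows easily from $L \leq q$ alone, to the tangent-axis tip, where the reach constraint is the binding one---combined with optimization over $L \in (0, q]$ and the sign of $\sin\beta$. The range $q < \pp \leq 0.96\rch$ established in Section~\ref{section:program} is used crucially to control the eccentricity of the ellipsoid uniformly. Although the value $1/\sqrt{3}$ is not always sharp, it is precisely what the deformation-retraction construction of the subsequent subsections requires.
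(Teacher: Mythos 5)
Your reduction of the claim to the bound $\cos\beta\le\frac{1}{\sqrt3}$ on the angle $\beta$ between $\prv(\gp)$ and the outward ellipsoid normal $\hat n$ is correct (indeed $\arccot(\tfrac{1}{\sqrt2})=\arccos(\tfrac{1}{\sqrt3})$), and your three inequalities are genuine consequences of the reach condition. The gap is that this constraint set is too weak to imply the bound, so the ``hardest step'' you defer cannot be carried out as set up. Your inequalities only record that the manifold \emph{points} $\smpp$ and $\pr(\gp)$ lie at distance at least $\rch=1$ from certain associated-ball centers. They are all satisfied by, e.g., the planar data $q=0.9$, $\gp=(\sqrt{q+q^2},0)$ (the tangent-axis tip of the $q$-ellipse, with $q<\pp$), $\hat v=\hat n=(1,0)$, $L=q$: then $\|\gp-(1-L)\hat v\|\approx 1.21\ge 1$ and $\|\gp+L\hat v\mp(0,1)\|\approx 2.42\ge 1$, while $\cos\beta=1$. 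Since your inequalities admit configurations with $\beta=0$ --- exactly the tangent-tip situation the lemma must exclude --- no case analysis or optimization over them can ever yield $\cos\beta\le\frac{1}{\sqrt3}$. A secondary problem is the planarization step: projecting $\pr(\gp)$ into the plane $\Pi$ does increase $\cos\beta$, but it can only decrease the distances to the in-plane ball centers, so the projected configuration need not satisfy the constraints you intend to use; ``planar plus constraints implies bound'' therefore does not give the general statement.

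The missing ingredient is the strictly stronger exclusion principle on which the paper's proof rests: the open associated $\rch$-ball at $\pr(\gp)$ on the side of $\gp$, with center $\pr(\gp)-\hat v$, cannot intersect \emph{all} open associated $\rch$-balls at $\smpp$; in particular its center must stay at distance at least $2\rch$ (not $\rch$) from the farthest associated-ball center at $\smpp$, and that farthest center is one of the two in-plane centers $(0,\pm1)$. This ball-to-ball disjointness kills the configuration above (there $\|(1.21,0)-(0,\pm1)\|\approx1.57<2$), and it is also what legitimately planarizes the problem. The qualitative part ($\pr(\gp)\notin\nhl$) is obtained in the paper by showing that $\hat v=\hat n$ would force the ball centered at $\gp-(1-\ell)\hat n$ to meet both in-plane balls at $\smpp$; the quantitative constant $\arccot(\tfrac{1}{\sqrt2})$ is then the minimal angle one must rotate that ball about $\pr(\gp)$ before it separates from the farthest ball, minimized over $q$ and $\chi$ after a monotonicity/continuity reduction to $\ell=0$. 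Note that even with this stronger constraint the paper does not finish by hand: both the distance inequality and the final minimization are verified with \textit{Mathematica}, so a purely manual case analysis is likely harder than you anticipate; but the essential defect in your plan is the absence of the ball-to-ball (center distance $\ge2\rch$) constraint.
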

			
			\begin{proof}
				Let $q \dfeq \pep(\gp)$. We have $q \leq \pp < 1$ and since $\gp \notin \mnf$, in particular $\gp \neq \smpp$, we have $q > 0$. Let $\vec{n}$ be the unit vector, orthogonal to the boundary of~$\cf{\smpp}{q}$ and pointing into the exterior of~$\cf{\smpp}{q}$, so that $\nhl = \set{X + t \vec{n}}{t \in \RR_{\geq 0}}$. Let $\ell \dfeq \|\prv(\gp)\|$; by assumption $\gp \notin \mnf$, so $\ell > 0$, and we may define $\vec{m} \dfeq \frac{\prv(\gp)}{\ell}$. Also, since $\on \subseteq \otnm{\rch} = \otnm{1}$, we have $\ell < 1$.
				
				Use Lemma~\ref{lemma:general-manifold-properties} to introduce a planar tangent-normal coordinate system with the origin at~$\smpp$ which contains~$\gp$ as well as~$\vec{n}$, hence the whole~$\nhl$. Without loss of generality assume that $\gp$ lies in the closed first quadrant, so that we have $\chi \in \intcc{0}{\frac{\pi}{2}}$ with $\gp = \big(\sqrt{q+q^2} \sin(\chi), q \cos(\chi)\big)$ (the angle is measured from~$\ns$).
				
				Let us first prove that $\pr(\gp) \notin \nhl$. Assume to the contrary that this were the case, so that $\vec{m} = \vec{n}$. We will derive the contradiction by showing that the open $\rch$-ball with the center in~$\gp - (1-\ell) \vec{m}$, associated to~$\mnf$ at~$\pr(\gp)$, intersects all open $\rch$-balls, associated to~$\mnf$ at~$\smpp$. Two of those have their centers in the tangent-normal plane we are considering, and necessarily one of those is the $\rch$-ball at~$\smpp$ which is the furthest away from the $\rch$-ball with the center in~$\gp - (1-\ell) \vec{m}$. It thus suffices to check that the latter intersects the former two.
				
				First we explicitly calculate~$\vec{m}$.
				\[\vec{m} = \vec{n} = \frac{\big(q \sin(\chi), \sqrt{q+q^2} \cos(\chi)\big)}{\big\|\big(q \sin(\chi), \sqrt{q+q^2} \cos(\chi)\big)\big\|} = \frac{\big(q \sin(\chi), \sqrt{q+q^2} \cos(\chi)\big)}{\sqrt{q \cos^2(\chi) + q^2}}\]
				
				We derive the contradiction by showing that $\dst\big(\gp - (1-\ell) \vec{m}, (0, \pm{1})\big) < 2$.
				\begin{align*}
					&\dst\big(\gp - (1-\ell) \vec{m}, (0, \pm{1})\big)^2 \\
					&= \big\|\gp - (1-\ell) \vec{m} - (0, \pm{1})\big\|^2 \\
					&= \|\gp\|^2 + (1-\ell)^2 + 1 - 2(1-\ell)\spr{\gp}{\vec{m}} - 2\spr{\gp - (1-\ell) \vec{m}}{(0, \pm{1})} \\
					&= q \sin^2(\chi) + q^2 + (1-\ell)^2 + 1 - 2(1-\ell) \tfrac{q \sqrt{q+q^2}}{\sqrt{q \cos^2(\chi) + q^2}} \\
					&\qquad\qquad \mp 2 \Big(q \cos(\chi) - (1-\ell) \tfrac{\sqrt{q+q^2} \cos(\chi)}{\sqrt{q \cos^2(\chi) + q^2}}\Big) \\
					&= q \big(1 - \cos^2(\chi)\big) + q^2 + (1-\ell)^2 + 1 - 2(1-\ell) \tfrac{q \sqrt{1+q}}{\sqrt{\cos^2(\chi) + q}} \\
					&\qquad\qquad \mp 2 \cos(\chi) \Big(q - (1-\ell) \tfrac{\sqrt{1+q}}{\sqrt{\cos^2(\chi) + q}}\Big) \\
					&= q \big(2 - \big(1 \pm \cos(\chi)\big)^2\big) + q^2 + (1-\ell)^2 + 1 - 2(1-\ell) \sqrt{\tfrac{1+q}{\cos^2(\chi) + q}} \big(q \mp \cos(\chi)\big) \\
					&= (1 + q)^2 - q \big(1 \pm \cos(\chi)\big)^2 + (1-\ell)^2 - 2(1-\ell) \sqrt{\tfrac{1+q}{\cos^2(\chi) + q}} \big(q \mp \cos(\chi)\big) \\
					&\leq (1 + q)^2 - q \big(1 \pm \cos(\chi)\big)^2 + 1 - 2(1-\ell) \sqrt{\tfrac{1+q}{\cos^2(\chi) + q}} \big(q \mp \cos(\chi)\big)
				\end{align*}
				We verify that this expression is~$< 4$ for $q, \ell \in \intoo{0}{1}$ and $\chi \in \intcc{0}{\frac{\pi}{2}}$ with the help from \textit{Mathematica}, see file \texttt{ProjectionNotOnHalfline.nb}, available at~\url{https://people.math.ethz.ch/~skalisnik/ProjectionNotOnHalfline.nb}.
				
				This has shown that $\vec{m}$ cannot be equal to~$\vec{n}$ because in that case the open $\rch$-ball with the center in $\gp - (1-\ell) \vec{m}$ would intersect all open $\rch$-balls, associated to~$\smpp$. A lower bound on the angle between $\vec{m}$ and~$\vec{n}$ is therefore the minimal angle, by which we must deviate from~$\vec{n}$, so that we no longer have an intersection of the aforementioned balls.
				
				Observe that if two balls intersect, the closer their centers are, the greater the angle we must turn one of them by around a point on its boundary, so that they stop intersecting. Hence, if we try to turn the ball with the center in $\gp - (1-\ell) \vec{n}$ around the point~$\pr(\gp)$ so that it no longer intersects all balls, associated to~$\smpp$, we can get a lower bound on the angle by turning it by a minimal angle so that it no longer intersects the ball, associated to~$\smpp$, which is furthest away. The center of this furthest ball lies in our planar tangent-normal coordinate system in which it has coordinates~$(0, -1)$. Furthermore, the greater the $\ell$ is, the further $\gp - (1-\ell) \vec{n}$ is away from~$(0, -1)$. The minimal angle by which we must turn the ball with this center continuously depends on~$\ell$ and can be continuously extended to $\ell = 0$ (the case we excluded by the assumption $\gp \notin \mnf$). Once we set $\ell = 0$, this minimal angle is still a function of~$q$ and~$\chi$, and its minimum is a lower bound for the angle for any~$\ell$.
				
				Calculating this minimum is very complicated however, so we again resort to a computer proof with \textit{Mathematica}, see file \texttt{AngleBetweenProjectionAndHalfline.nb} at\\ \url{https://people.math.ethz.ch/~skalisnik/AngleBetweenProjectionAndHalfline.nb}.
			\end{proof}
			
			The desired deformation retraction should flow in the direction of~$\lavf$. However, the field~$\lavf$ is defined only on a single ellipsoid~$\of$. Two such vector fields generally do not coincide on the intersection of two (or more) ellipsoids, so we use the partition of unity, constructed in Subsection~\ref{subsection:partition-of-unity}, to merge the vector fields~$\lavf$ into one.
			
			Define the vector field $\avf\colon \on \to \RR^\ad$ as
			\[\avf(\gp) \dfeq \pun(\gp) \;\! \prv(\gp) + \sum_{\smpp \in \smp} \pu(\gp) \;\! \lavf(\gp).\]
			We understand this definition in the usual sense: this sum has only finitely many non-zero terms at each~$\gp$ (in fact at most two by Proposition~\ref{proposition:supports-pairwise-disjoint}), and outside of the ellipsoid~$\of$, we take the value of~$\lavf$ to be~$0$.
			
			\begin{corollary}\label{corollary:normal-projection-away-from-normal-space-of-ellipsoid}
				\
				\begin{enumerate}
					\item
						If $\smpp \in \smp$ and $\gp \in \of$, then
						\[\spr{\lavf(\gp)}{\prv(\gp)} = \Big(\big\|\prv(\gp)\big\| \cdot \cos(\pa)\Big)^2 \geq \tfrac{2}{3} \;\! \big\|\prv(\gp)\big\|^2.\]
					\item
						If $\gp \in \on$, then $\spr{\avf(\gp)}{\prv(\gp)} \geq \tfrac{2}{3} \;\! \big\|\prv(\gp)\big\|^2$.
				\end{enumerate}
				In particular, these two scalar products are non-zero outside~$\mnf$. Hence the fields $\lavf$ and $\avf$ have no zeros outside~$\mnf$.
			\end{corollary}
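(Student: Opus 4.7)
The entire proof is a repackaging of Lemma~\ref{lemma:normal-projection-away-from-normal-space-of-ellipsoid}: that lemma supplies the geometric content, and the corollary just translates its angle bound into an inner-product inequality and then propagates the inequality through the partition of unity via linearity.

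For part~(1), I will fix $\smpp \in \smp$ and $\gp \in \of$, and first dispose of the case $\gp \in \mnf$, where $\prv(\gp) = 0$ forces $\lavf(\gp) = 0$ as well (the projection of the zero vector onto any convex set containing the origin is zero), so both sides of the asserted inequality vanish. For $\gp \in \of \setminus \mnf$, I set $q \dfeq \pep(\gp) \in \intoo{0}{\pp}$, so that $\cf{\smpp}{q}$ is a non-degenerate closed ellipsoid with $\gp \in \bd{\cf{\smpp}{q}}$. I introduce the orthonormal coordinate system centred at $\gp$ from the definition of $\lavf$, with unit inward normal $\vec n$, and decompose $\prv(\gp) = \prv_T + c\,\vec n$ where $\prv_T \perp \vec n$. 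The coordinate formula then gives $\lavf(\gp) = \prv_T + \max\{c,0\}\,\vec n$. A short case split on the sign of $c$ yields the identity $\spr{\lavf(\gp)}{\prv(\gp)} = \|\lavf(\gp)\|^2$ in both sign cases (it is the standard identity for orthogonal projection onto a half-space through the origin). Since $\pa$ is by definition the angle between $\prv(\gp)$ and $\lavf(\gp)$, I have $\|\lavf(\gp)\| = \|\prv(\gp)\|\cos(\pa)$, which gives the claimed equality. The inequality then follows from the bound $\cos(\pa) \geq \sqrt{2/3}$ supplied by Lemma~\ref{lemma:normal-projection-away-from-normal-space-of-ellipsoid}.

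For part~(2), I will expand the inner product using linearity:
\[\spr{\avf(\gp)}{\prv(\gp)} = \pun(\gp)\|\prv(\gp)\|^2 + \sum_{\smpp \in \smp} \pu(\gp)\spr{\lavf(\gp)}{\prv(\gp)}.\]
The first term is at least $\tfrac{2}{3}\pun(\gp)\|\prv(\gp)\|^2$ trivially. For each $\smpp$-summand, if $\pu(\gp) = 0$ the term vanishes; if $\pu(\gp) > 0$, then $\gp \in \supp{\pu} \subseteq \of$, so part~(1) applies and yields $\pu(\gp)\spr{\lavf(\gp)}{\prv(\gp)} \geq \tfrac{2}{3}\pu(\gp)\|\prv(\gp)\|^2$. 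Summing and using $\pun(\gp) + \sum_{\smpp \in \smp}\pu(\gp) = 1$ gives the desired bound. The final vanishing statement is then immediate: for $\gp \in \on \setminus \mnf$, $\|\prv(\gp)\|^2 > 0$ makes both lower bounds strictly positive, so neither $\lavf(\gp)$ nor $\avf(\gp)$ can be zero.

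I do not anticipate any real obstacle; the only mildly subtle point is the sign case on $c$ in part~(1), which is precisely the place where the half-space (rather than tangent-hyperplane) nature of the projection matters and which makes the identity $\spr{\lavf(\gp)}{\prv(\gp)} = \|\lavf(\gp)\|^2$ valid across both regimes.
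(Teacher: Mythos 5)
Your proof is correct and follows essentially the same route as the paper: in part (1) the case split on the sign of the normal component is just the paper's dichotomy between $\lavf(\gp)=\prv(\gp)$ and $\lavf(\gp)$ being the orthogonal projection onto the tangent hyperplane, combined with the bound $\cos(\pa)\geq\sqrt{2/3}$ from Lemma~\ref{lemma:normal-projection-away-from-normal-space-of-ellipsoid}, and part (2) is the same linearity-plus-partition-of-unity computation. Your explicit handling of $\gp\in\mnf$ and of the fact that $\pu(\gp)>0$ forces $\gp\in\supp{\pu}\subseteq\of$ only makes explicit what the paper leaves implicit.
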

			
			\begin{proof}
				\
				\begin{enumerate}
					\item
						Assume first that $\prv(\gp)$ points into the half-space bounded by~$\ts[\bd{\cf{\smpp}{\pep(\gp)}}]{\gp}$ which contains~$\cf{\smpp}{\pep(\gp)}$. Then $\lavf(\gp) = \prv(\gp)$ and $\pa = 0$, so the statement is clear.
						
						Otherwise, $\lavf(\gp)$ is the orthogonal projection of $\prv(\gp)$ onto~$\ts[\bd{\cf{\smpp}{q}}]{\gp}$, so ${\big\|\lavf(\gp)\big\| = \big\|\prv(\gp)\big\| \cdot \cos(\pa)}$ and
						\[\spr{\lavf(\gp)}{\prv(\gp)} = \big\|\lavf(\gp)\big\| \cdot \big\|\prv(\gp)\big\| \cdot \cos(\pa) = \Big(\big\|\prv(\gp)\big\| \cdot \cos(\pa)\Big)^2.\]
						
						For the inequality, we use Lemma~\ref{lemma:normal-projection-away-from-normal-space-of-ellipsoid} to get $\cos^2(\pa) \geq \frac{2}{3}$.
					\item
						We have
						\begin{align*}
							\spr{\avf(\gp)}{\prv(\gp)} &= \spr{\pun(\gp) \;\! \prv(\gp) + \sum_{\smpp \in \smp} \pu(\gp) \;\! \lavf(\gp)}{\prv(\gp)} \\
							&= \pun(\gp) \big\|\prv(\gp)\big\|^2 + \sum_{\smpp \in \smp} \pu(\gp) \;\! \spr{\lavf(\gp)}{\prv(\gp)} \\
							&\geq \pun(\gp) \big\|\prv(\gp)\big\|^2 + \sum_{\smpp \in \smp} \pu(\gp) \;\! \tfrac{2}{3} \;\! \big\|\prv(\gp)\big\|^2 \\
							&\geq \tfrac{2}{3} \;\! \big\|\prv(\gp)\big\|^2.
						\end{align*}
				\end{enumerate}
			\end{proof}
			
			There is one more problem with taking~$\avf$ as the direction vector field of the deformation retraction. The closer $\gp$ is to the manifold, the shorter the vector $\prv(\gp)$, and thus~$\avf(\gp)$, is. If we used~$\avf$ as the velocity vector field for the flow, we would need infinite time to reach the manifold~$\mnf$. If we scale the vector field in the way that the distance to the manifold decreases with speed~$1$, we are sure to reach the manifold within time~$1$ which is how one usually gives a deformation retraction (or more generally any homotopy).
			
			Since $\dst(\gp, \mnf) = \dst(\gp, \pr(\gp))$, we need to divide~$\avf(\gp)$ with the length of its projection onto the vector~$\prv(\gp)$. Hence the following definition of the vector field $\vf\colon \on \setminus \mnf \to \RR^\ad$:
			\[\vf(\gp) \dfeq \frac{\|\prv(\gp)\|}{\spr{\avf(\gp)}{\prv(\gp)}} \;\! \avf(\gp).\]
			Corollary~\ref{corollary:normal-projection-away-from-normal-space-of-ellipsoid} ensures that the vector field~$\vf$ is well defined and that it has the same direction as~$\avf$.
			
			\begin{proposition}\label{proposition:vector-fields-bounded-and-lipschitz}
				For every $\smpp \in \smp$ the field $\lavf\colon \of \to \RR^\ad$ is bounded Lipschitz. The fields $\avf\colon \on \to \RR^\ad$ and ${\vf\colon \on \setminus \mnf \to \RR^\ad}$ are bounded and locally Lipschitz.
			\end{proposition}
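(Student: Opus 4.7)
The plan is to handle the three fields in turn, leaning on the building blocks already assembled. For $\lavf$, Proposition~\ref{proposition:local-vector-field-lipschitz} already supplies the Lipschitz coefficient, so only boundedness remains. Since $\lavf(\gp)$ is the orthogonal projection of $\prv(\gp)$ (regarded as an arrow from $\gp$) onto the closed half-space $\hs$ whose boundary passes through $\gp$, and such projection is $1$-Lipschitz and fixes $\gp$, one has $\|\lavf(\gp)\| \le \|\prv(\gp)\|$; Lemma~\ref{lemma:distance-to-manifold} combined with $\pep(\gp) < \pp$ then gives $\|\lavf(\gp)\| < \pp$.

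For $\avf = \pun\,\prv + \sum_{\smpp \in \smp} \pu\,\lavf$, boundedness by $\pp$ is immediate from the fact that $\pun$ together with the $\pu$ form a partition of unity and each contributing vector has norm at most $\|\prv\| \le \pp$. For local Lipschitzness the idea is to apply Corollary~\ref{corollary:partition-of-unity-locally-lipschitz-amalgamation-is-locally-lipschitz}: the partition is smooth and locally finite (Proposition~\ref{proposition:supports-pairwise-disjoint}), the map $\prv$ is locally Lipschitz on $\on \subseteq \otnm{\rch}$ by Lemma~\ref{lemma:continuity-of-closest-point-projection}, and each summand $\pu\,\lavf$ extended by zero outside $\of$ is locally Lipschitz on $\on$, because $\supp{\pu} \subseteq \of$ forces $\pu$ (and thus the product) to vanish on an open neighbourhood of any point of $\on \setminus \of$.

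Finally, for $\vf(\gp) = \frac{\|\prv(\gp)\|}{\spr{\avf(\gp)}{\prv(\gp)}}\,\avf(\gp)$ on $\on \setminus \mnf$, the key inputs are Corollary~\ref{corollary:normal-projection-away-from-normal-space-of-ellipsoid} ($\spr{\avf}{\prv} \ge \tfrac{2}{3}\|\prv\|^2$) and the elementary estimate $\|\avf\| \le \|\prv\|$ (convex combination of vectors of norm $\le \|\prv\|$); together they yield $\|\vf\| \le \tfrac{3}{2}$. Local Lipschitzness is a pointwise argument: about any $\gp_0 \in \on \setminus \mnf$ the continuous function $\|\prv\|$ is bounded below by a positive constant, hence so is $\spr{\avf}{\prv}$, and two successive applications of Lemma~\ref{lemma:lipschitz} (first the quotient, then its product with $\avf$) complete the proof. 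The main obstacle is the gluing step for $\avf$: one must verify cleanly that the zero extension of each $\pu\,\lavf$ across $\bd\of$ remains locally Lipschitz, which hinges on the support containment $\supp{\pu} \subseteq \of$ recorded in Subsection~\ref{subsection:partition-of-unity}.
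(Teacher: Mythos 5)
Your proposal is correct and follows essentially the same route as the paper: boundedness of $\lavf$ via the norm-nonincreasing half-space projection and Lemma~\ref{lemma:distance-to-manifold}, boundedness of $\avf$ via the partition of unity, local Lipschitzness of $\avf$ via Corollary~\ref{corollary:partition-of-unity-locally-lipschitz-amalgamation-is-locally-lipschitz}, and for $\vf$ the lower bound from Corollary~\ref{corollary:normal-projection-away-from-normal-space-of-ellipsoid} combined with local applications of Lemma~\ref{lemma:lipschitz}. The only cosmetic difference is your bound $\|\vf\|\leq\tfrac{3}{2}$ versus the paper's sharper $\sqrt{\tfrac{3}{2}}$, which is immaterial for the statement.
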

			
			\begin{proof}
				The projection onto a half-space is \mbox{$1$-Lipschitz}; since the map $\prv$ is bounded in norm (by Lemma~\ref{lemma:distance-to-manifold} we have $\|\prv(\gp)\| = \dst(\gp, \pr(\gp)) = \dst(\gp, \mnf) \leq \pep(\gp) < \pp$), the field~$\lavf$ is also bounded. Lemma~\ref{lemma:continuity-of-closest-point-projection} tells us that the map~$\prv$ is \mbox{$(\frac{1}{1-\pp} + 1)$-Lipschitz} on $\of \subseteq \ctnm{1-\pp}$. As the composition of two Lipschitz maps, the vector field~$\lavf$ is Lipschitz with the product Lipschitz coefficient, i.e.~also~$\frac{1}{1-\pp} + 1$.
				
				Since the norm of the map~$\prv$, as well as all~$\lavf$, has the same bound~$\pp$, this is also a bound on the norm of~$\avf$:
				\[\big\|\avf(\gp)\big\| = \big\|\pun(\gp) \;\! \prv(\gp) + \sum_{\smpp \in \smp} \pu(\gp) \;\! \lavf(\gp)\big\| \leq\]
				\[\leq \pun(\gp) \;\! \big\|\prv(\gp)\big\| + \sum_{\smpp \in \smp} \pu(\gp) \;\! \big\|\lavf(\gp)\big\| \leq \Big(\pun(\gp) + \sum_{\smpp \in \smp} \pu(\gp)\Big) \;\! \pp = \pp.\]
				The field~$\avf$ is locally Lipschitz by Corollary~\ref{corollary:partition-of-unity-locally-lipschitz-amalgamation-is-locally-lipschitz}.
				
				Assume now that $\gp \in \on \setminus \mnf$. Recall from Lemma~\ref{lemma:normal-projection-away-from-normal-space-of-ellipsoid} that $\cos(\pa) \geq \sqrt{\frac{2}{3}}$. Thus
				\begin{align*}
					\frac{\|\prv(\gp)\|}{\spr{\avf(\gp)}{\prv(\gp)}} &= \frac{1}{\spr{\pun(\gp) \;\! \prv(\gp) + \sum_{\smpp \in \smp} \pu(\gp) \;\! \lavf(\gp)}{\frac{\prv(\gp)}{\|\prv(\gp)\|}}} \\
					&= \frac{1}{\pun(\gp) \|\prv(\gp)\| + \sum_{\smpp \in \smp} \pu(\gp) \;\! \spr{\lavf(\gp)}{\frac{\prv(\gp)}{\|\prv(\gp)\|}}} \\
					&= \frac{1}{\pun(\gp) \|\prv(\gp)\| + \sum_{\smpp \in \smp} \pu(\gp) \;\! \|\lavf(\gp)\| \cos(\pa)} \\
					&\leq \frac{1}{\pun(\gp) \|\prv(\gp)\| + \sqrt{\frac{2}{3}} \sum_{\smpp \in \smp} \pu(\gp) \;\! \|\lavf(\gp)\|} \\
					&\leq \frac{1}{\sqrt{\frac{2}{3}} \big\|\pun(\gp) \;\! \prv(\gp) + \sum_{\smpp \in \smp} \pu(\gp) \;\! \lavf(\gp)\big\|} \\
					&= \frac{1}{\sqrt{\frac{2}{3}} \|\avf(\gp)\|}.
				\end{align*}
				It follows that the norm of~$\vf$ is bounded by~$\sqrt{\frac{3}{2}}$.
				
				Let $\st{U}$ be a neighbourhood of~$\gp$, where $\avf$ is Lipschitz. Let $r \in \RR_{> 0}$ be such that $\ob{\gp}{r} \subseteq \st{U}$ and $r < \dst(\mnf, \gp) = \|\prv(\gp)\|$ and $r < 1 - \dst(\mnf, \gp)$. We claim that $\vf$ is Lipschitz on $\ob{\gp}{r}$ and therefore locally Lipschitz.
				
				By Lemma~\ref{lemma:continuity-of-closest-point-projection} the map~$\prv$ is Lipschitz on ${\ob{\gp}{r} \subseteq \otnm{1 - \dst(\mnf, \gp)}}$. The map~$\|\prv(\ph)\|$ is a composition of Lipschitz maps and therefore Lipschitz on~$\ob{\gp}{r}$. Clearly, it is also bounded.
				
				Since $\avf$ is also bounded and Lipschitz on ${\ob{\gp}{r} \subseteq \st{U}}$, so is the scalar product ${\sgp \mapsto \spr{\avf(\sgp)}{\prv(\sgp)}}$ by Lemma~\ref{lemma:lipschitz}. Recall from Corollary~\ref{corollary:normal-projection-away-from-normal-space-of-ellipsoid} that
				\[\spr{\avf(\sgp)}{\prv(\sgp)} \geq \tfrac{2}{3} \big\|\prv(\sgp)\big\| > \tfrac{2}{3} \big(\dst(\mnf, \gp) - r\big) > 0.\]
				Hence Lemma~\ref{lemma:lipschitz} also tells us that the map $\sgp \mapsto \frac{\|\prv(\sgp)\|}{\spr{\avf(\sgp)}{\prv(\sgp)}}$ is bounded Lipschitz on~$\ob{\gp}{r}$, and then so is its product with~$\avf$, i.e.~the field~$\vf$.
			\end{proof}
			
			The reason we consider the local Lipschitz property is that it allow us to define the flow of the field~$\vf$.
		
		\subsection{The Flow of the Vector Field}\label{subsection:flow}
		
			We will use the flow of the vector field~$\vf$ as part of the definition of the desired deformation retraction. Generally the flow of a vector field need not exist globally, and in our case the whole point is that the flow takes us to the manifold where the vector field is not defined. However, before we can establish what the exact domain of definition for the flow is, we will already need to refer to the flow to prove some of its properties. As such, it will be convenient to treat the flow as a partial function. Also, it is convenient to use \df{Kleene equality}~$\ke$ in the context of partial functions: $a \ke b$ means that $a$ is defined if and only if $b$ is, and is they are defined, they are equal.
			
			The flow of the vector field~$\vf\colon \on \setminus \mnf \to \RR^\ad$ can thus be given as a partial map ${\flow\colon (\on \setminus \mnf) \times \RR_{\geq 0} \parto \on \setminus \mnf}$ which satisfies the following for all $\gp \in \on \setminus \mnf$ and $t, u \in \RR_{\geq 0}$:
			\begin{enumerate}
				\item
					the domain of definition of~$\flow$ is an open subset of~$(\on \setminus \mnf) \times \RR_{\geq 0}$,
				\item
					the flow~$\flow$ is continuous everywhere on its domain of definition,
				\item
					if $\flow(\gp, u)$ is defined and $t \leq u$, then $\flow(\gp, t)$ is defined,
				\item
					$\flow(\gp, 0) \ke \gp$,
				\item
					$\flow\big(\flow(\gp, t), u\big) \ke \flow(\gp, t + u)$,
				\item
					if $\flow(\gp, u)$ is defined, the derivative of the function $\flow(\gp, \ph)$ exists at~$u$, and is equal to~$\vf\big(\flow(\gp, u)\big)$.
			\end{enumerate}
			
			A standard result~\cite{coleman2012calculus} tells us that if a vector field is locally Lipschitz, it has a local vector flow. That is, for every $\gp \in \on \setminus \mnf$ there exists $\epsilon \in \RR_{> 0}$ such that $\flow(\gp, t)$ is defined for all $t \in \intco{0}{\epsilon}$.
			
			We claim that if we move with the flow~$\flow$ of the vector field~$\vf$, we approach the manifold~$\mnf$ with constant speed.
			
			\begin{lemma}\label{lemma:flow-distance-to-manifold}
				If $(\gp, u) \in (\on \setminus \mnf) \times \RR_{\geq 0}$ is in the domain of definition of~$\flow$, then
				\[\dst\big(\mnf, \flow(\gp, u)\big) = \dst(\mnf, \gp) - u.\]
			\end{lemma}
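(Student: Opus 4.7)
The plan is to show that the quantity $d(u) \dfeq \dst(\mnf, \flow(\gp, u))$ has derivative identically $-1$ wherever the flow is defined, and then integrate. This is morally the reason the field $\vf$ was normalized the way it was in Subsection~\ref{subsection:vector-field}: the factor $\frac{\|\prv(\gp)\|}{\spr{\avf(\gp)}{\prv(\gp)}}$ is precisely the scaling that makes the component of $\vf(\gp)$ along $-\prv(\gp)/\|\prv(\gp)\|$ (the outward direction from $\mnf$) equal to $-1$ in magnitude.

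The first step is a differentiability lemma for the distance function on $\ma^\complement$: for every $\gp_0 \in \on \setminus \mnf$ and vector $\vec{v}$, the directional derivative of $\sgp \mapsto \dst(\mnf, \sgp)$ at $\gp_0$ in direction $\vec{v}$ equals $-\bigl\langle \prv(\gp_0)/\|\prv(\gp_0)\|, \vec{v}\bigr\rangle$. I would prove this by sandwiching the squared distance: on the one hand, plugging $\pr(\gp_0)$ into the definition gives
\[\dst(\mnf, \gp_0 + t\vec{v})^2 \leq \|(\gp_0 + t\vec{v}) - \pr(\gp_0)\|^2 = \dst(\mnf, \gp_0)^2 - 2t\spr{\prv(\gp_0)}{\vec{v}} + O(t^2),\]
while on the other hand, writing $\sgp(t) \dfeq \pr(\gp_0 + t\vec{v})$ and using $\|\gp_0 - \sgp(t)\| \geq \|\gp_0 - \pr(\gp_0)\|$ (since $\pr(\gp_0)$ is the closest point to $\gp_0$) together with the continuity of $\pr$ from Lemma~\ref{lemma:continuity-of-closest-point-projection} (so that $\sgp(t) \to \pr(\gp_0)$) yields the matching lower bound $\dst(\mnf, \gp_0 + t\vec{v})^2 \geq \dst(\mnf, \gp_0)^2 - 2t\spr{\prv(\gp_0)}{\vec{v}} + o(t)$. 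Taking square roots (permissible because $\dst(\mnf, \gp_0) = \|\prv(\gp_0)\| > 0$) gives the claim.

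The second step is the chain rule: since the integral curve $t \mapsto \flow(\gp, t)$ is $C^1$ with velocity $\vf(\flow(\gp, t))$, the function $d$ is differentiable at every $u$ in its domain with
\[d'(u) = -\frac{\bigl\langle \prv(\flow(\gp, u)),\, \vf(\flow(\gp, u))\bigr\rangle}{\|\prv(\flow(\gp, u))\|}.\]
Plugging in the definition $\vf(\sgp) = \tfrac{\|\prv(\sgp)\|}{\spr{\avf(\sgp)}{\prv(\sgp)}} \avf(\sgp)$, the numerator simplifies to $\|\prv(\flow(\gp, u))\|$, so $d'(u) = -1$ on the whole interval of definition. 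Since $d$ is continuous and has constant derivative $-1$, integrating from $0$ to $u$ yields $d(u) = d(0) - u$, which is the desired equality.

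The main obstacle I anticipate is the differentiability-of-distance step: although the formula for the gradient of $\dst(\mnf, \cdot)$ on $\ma^\complement$ is classical, the paper works only with $C^1$ manifolds, so one should avoid invoking $C^2$-dependent results about the projection; the sandwich argument above only uses continuity of $\pr$, which was already established. Everything else is essentially forced by the definition of $\vf$, and the only remaining bookkeeping is confirming that Corollary~\ref{corollary:normal-projection-away-from-normal-space-of-ellipsoid} keeps the denominator $\spr{\avf(\gp)}{\prv(\gp)}$ away from zero so that the cancellation is legitimate.
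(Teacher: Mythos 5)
Your proposal is correct and follows essentially the same route as the paper: both prove that $t \mapsto \dst\big(\mnf, \flow(\gp, t)\big)$ has derivative $-1$ by combining the chain rule with the fact that the gradient of $\dst(\mnf, \cdot)$ at a point $\sgp$ off the manifold is $-\prv(\sgp)/\|\prv(\sgp)\|$, after which the normalization built into $\vf$ makes the inner product collapse to $-1$, and the initial condition at $t=0$ finishes it. The only difference is that you justify the derivative formula for the distance function by a sandwich argument using only continuity of~$\pr$ (appropriate in the $\C{1}$ setting), whereas the paper simply asserts the Jacobian of $\dst(\mnf,\cdot)$ in a suitable coordinate system, so your write-up is if anything slightly more careful on that point.
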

			
			\begin{proof}
				Consider the functions $\intcc{0}{u} \to \RR$, given by $t \mapsto \dst\big(\mnf, \flow(\gp, t)\big)$ and $t \mapsto \dst(\mnf, \gp) - t$. To show that these two functions are the same (and thus in particular coincide for $t = u$), it suffices to show that they match in one point and have the same derivative.
				
				For $t = 0$, we have $\dst\big(\mnf, \flow(\gp, 0)\big) = \dst(\mnf, \gp)$. The derivative of the second function is constantly~$-1$. We calculate the derivative of the first function via the chain rule. Take $t \in \intcc{0}{u}$ and introduce an orthonormal $\ad$-dimensional coordinate system with the origin in~$\sgp \dfeq \flow(\gp, t)$, such that the first coordinate axis points in the direction of~$\prv(\sgp)$. In this coordinate system, the Jacobian matrix of the map $\dst(\mnf, \ph)$ at~$\sgp$ is a matrix row with the first entry~$-1$ and the rest~$0$. We need to multiply this matrix with the column, the first entry of which is $\spr{\flow'(\gp, t)}{\tfrac{\prv(\sgp)}{\|\prv(\sgp)\|}}$, i.e.~the scalar projection onto the direction $\frac{\prv(\sgp)}{\|\prv(\sgp)\|}$ of the derivative of~$\flow(\gp, \ph)$ at~$\sgp$.
				
				By the chain rule, the derivative of the function $t \mapsto \dst\big(\mnf, \flow(\gp, t)\big)$ is therefore
				\[(-1) \cdot \spr{\flow'(\gp, t)}{\tfrac{\prv(\sgp)}{\|\prv(\sgp)\|}} = -\spr{\vf(\sgp)}{\tfrac{\prv(\sgp)}{\|\prv(\sgp)\|}} = -\spr{\tfrac{\|\prv(\sgp)\|}{\spr{\avf(\sgp)}{\prv(\sgp)}} \;\! \avf(\sgp)}{\tfrac{\prv(\sgp)}{\|\prv(\sgp)\|}} = -1,\]
				as required.
			\end{proof}
			
			The next lemma is a tool which serves as a form of induction for real intervals.
			\begin{lemma}\label{lemma:interval-induction}
				Let $a \in \RR_{\geq 0}$ and let $I$ be either the interval $\intco{0}{a}$ or the interval~$\intcc{0}{a}$. Let $L \subseteq I$ have the following properties:
				\begin{itemize}
					\item
						$L$ is a lower subset of~$I$ (i.e.~$\all{t, u \in I}{u \in L \land t \leq u \impl t \in L}$),
					\item
						$0 \in L$,
					\item
						for every $t \in L_{< a}$ there exists $u \in I_{> t}$ such that $u \in L$,
					\item
						for every $t \in I$, if $\intco{0}{t} \subseteq L$, then $t \in L$.
				\end{itemize}
				Then $L = I$.
			\end{lemma}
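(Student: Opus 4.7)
The plan is the standard supremum argument from the completeness of $\RR$, combined with a careful case split on which of the two intervals $I$ is and whether the supremum equals~$a$.

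First I would introduce $s \dfeq \sup L$, which exists in $\intcc{0}{a}$ because $L$ is non-empty (it contains~$0$) and bounded above by~$a$. The first step is to show that $\intco{0}{s} \subseteq L$: given any $t \in \intco{0}{s}$, by the definition of supremum there is some $u \in L$ with $t < u \leq s$, and then $t \in L$ by the lower-subset property of~$L$. The second step is to apply the fourth hypothesis (induction at limits) whenever $s \in I$, concluding $s \in L$.

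Next comes the case analysis. If $s < a$, then $s \in I$ regardless of whether $I$ is half-open or closed, so by the previous step $s \in L$. Because $s < a$, the third hypothesis yields some $u \in I_{>s}$ with $u \in L$, which contradicts $s = \sup L$. If on the other hand $s = a$, then in the closed case $I = \intcc{0}{a}$ we have $s \in I$, so $s \in L$ by the fourth hypothesis; combined with $\intco{0}{s} \subseteq L$ this gives $L = \intcc{0}{a} = I$. In the half-open case $I = \intco{0}{a}$, we already have $\intco{0}{s} = I \subseteq L$, and since $L \subseteq I$ we again conclude $L = I$.

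There is no real obstacle here: the only mild subtlety is remembering to apply hypothesis~4 only when $s \in I$ (which fails precisely for the pair $s=a$, $I = \intco{0}{a}$, but in that very case the inclusion $\intco{0}{s} \subseteq L$ already does all the work). Everything else is routine manipulation of suprema.
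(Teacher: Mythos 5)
Your argument is correct, but it proceeds differently from the paper. The paper's proof is a one-liner via connectedness: it observes that, given the lower-subset hypothesis, the third bullet is exactly openness of~$L$ in~$I$ and the fourth is exactly closedness, so $L$ is a non-empty clopen subset of the connected space~$I$ and hence equals~$I$. You instead run the completeness argument directly: set $s \dfeq \sup L$, show $\intco{0}{s} \subseteq L$ from the lower-subset property, feed that into the fourth hypothesis to get $s \in L$ whenever $s \in I$, and then rule out $s < a$ with the third hypothesis, treating $s = a$ separately for $I = \intcc{0}{a}$ and $I = \intco{0}{a}$. Both routes are sound; your case analysis is complete, and you correctly flag the only delicate point, namely that hypothesis~4 may only be invoked when $s \in I$, which fails exactly for $s = a$, $I = \intco{0}{a}$, where the inclusion $\intco{0}{s} \subseteq L$ already finishes the proof. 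What the paper's approach buys is brevity and a conceptual reading of the hypotheses as ``open'' and ``closed'' conditions; what yours buys is a self-contained, more elementary argument that uses only the least-upper-bound property rather than citing connectedness of real intervals (which is itself usually proved by the very supremum argument you carried out).
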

			
			\begin{proof}
				To prove $L = I$, it suffices to show that $L$ is non-empty, open and closed in~$I$ since $I$ is connected.
				
				Because $L$ contains~$0$, it is non-empty. Since $L$ is a lower subset of~$I$, the third assumption on~$L$ is equivalent to openness of~$L$, and the fourth assumption is equivalent to closedness of~$L$.
			\end{proof}
			
			\begin{lemma}\label{lemma:field-normal-direction}
				An $\epsilon \in \intoo{0}{\pp}$ exists so that for every $\gp \in \npa \setminus \cn{\pp-\epsilon}$ and every $\smpp \in \smp$, such that $\gp, \pr(\gp) \in \of$, the vector~$\vf(\gp)$ has the same direction as~$\prv(\gp)$.
			\end{lemma}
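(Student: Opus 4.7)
The goal is to choose $\epsilon$ so that $\avf(\gp)$ is a positive scalar multiple of $\prv(\gp)$ throughout the shell $\npa \setminus \cn{\pp-\epsilon}$; then by construction $\vf(\gp) = \tfrac{\|\prv(\gp)\|}{\langle \avf(\gp),\prv(\gp)\rangle}\,\avf(\gp)$ is one as well, and has the same direction as $\prv(\gp)$. By Proposition~\ref{proposition:supports-pairwise-disjoint}, at every $\gp \in \on$ at most one of the bumps $\pu[\smpp']$ is positive. If none is, then $\avf(\gp) = \pun(\gp)\,\prv(\gp) = \prv(\gp)$ because the partition sums to $1$, and we are done. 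Otherwise, let $\smpp'$ denote the unique sample with $\pu[\smpp'](\gp) > 0$; writing $\avf(\gp) = \pun(\gp)\,\prv(\gp) + \pu[\smpp'](\gp)\,\lavf[\smpp'](\gp)$, the task reduces to proving $\lavf[\smpp'](\gp) = \prv(\gp)$, which by the definition of $\lavf[\smpp']$ amounts to showing that $\prv(\gp)$ lies in the closed half-space $\hs[\smpp']$ bounded by the tangent hyperplane to $\bd\cf{\smpp'}{\pep[\smpp'](\gp)}$ at $\gp$ and containing the ellipsoid interior.

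The core of the argument is a boundary-limit continuity estimate. At $\pep[\smpp'](\gp) = \pp$, so $\gp \in \bd\cf{\smpp'}{\pp}$, suppose $\pr(\gp)$ lies strictly inside $\of{\smpp'}$. By convexity of $\cf{\smpp'}{\pp}$ the segment from $\gp$ to $\pr(\gp)$ enters the interior immediately, which is equivalent to $\langle \prv(\gp),\vec{n}\rangle < 0$ where $\vec{n}$ is the outward unit normal to $\bd\cf{\smpp'}{\pp}$ at $\gp$. This strict inequality is quantifiable, and by combining continuity of $\prv$ on $\on$ (Lemma~\ref{lemma:continuity-of-closest-point-projection}), smooth dependence of the outward normal of $\bd\cf{\smpp'}{\pep[\smpp'](\gp)}$ on $\gp$ and on $\pep[\smpp'](\gp)$, and rigid-motion invariance of the ellipsoid shape (so the quantitative bound is independent of $\smpp'$), one obtains a single $\epsilon \in (0,\pp)$ for which the strict inward-pointing inequality persists throughout the shell, giving $\prv(\gp) \in \hs[\smpp']$ as required.

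The delicate ingredient is verifying that $\pr(\gp) \in \of{\smpp'}$ whenever $\pu[\smpp'](\gp) > 0$ and $\gp$ lies in the shell. The condition $\pu[\smpp'](\gp) > 0$ puts $\gp$ outside $\tde[\smpp']$, so that $\dst(\de[\smpp'], \gp) > \tfrac{3}{2} \dst(\se[\smpp'], \gp)$; in particular some point arbitrarily close to $\gp$ lies in $\se[\smpp']$, hence only in $\of{\smpp'}$. Combined with the shell constraint $\pep[\smpp'](\gp) > \pp - \epsilon$ (which places $\gp$ within Euclidean distance $O(\epsilon)$ of $\bd \of{\smpp'}$), this geometric configuration forces, in the boundary limit, any witness $\smpp$ of $\gp \in \npa$ to coincide with $\smpp'$, so that $\pr(\gp) \in \of = \of{\smpp'}$. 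Carefully tracking this interplay between the definitions of $\se[\smpp'], \de[\smpp'], \tse[\smpp'], \tde[\smpp']$ and the ellipsoid geometry in the boundary regime is the main obstacle of the proof; it is what pins down the allowable size of $\epsilon$.
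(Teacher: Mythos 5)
Your reduction is the same as the paper's: by Proposition~\ref{proposition:supports-pairwise-disjoint} at most one bump $\pu[\smpp']$ is positive, so it suffices to show $\lavf{\smpp'}(\gp)=\prv(\gp)$, i.e.\ that $\prv(\gp)$ points into the closed half-space bounded by the hyperplane tangent to $\bd\cf{\smpp'}{\pep[\smpp'](\gp)}$ at~$\gp$. But both steps you use to get there have genuine gaps. The first is the claim that the strict inequality $\spr{\prv(\gp)}{\vec{n}}<0$ in the boundary limit $\pep[\smpp'](\gp)=\pp$ is ``quantifiable'' and persists, by continuity and rigid-motion invariance, for a single $\epsilon$ over the whole shell. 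No uniform bound is produced, and none follows from what you cite: $\mnf$ need not be compact and $\smp$ may be infinite, so no compactness argument is available, and the inequality genuinely degenerates where $\pr(\gp)$ is close to $\gp$ or close to $\bd\of{\smpp'}$ (near points where $\bd\cf{\smpp'}{\pp}$ approaches $\mnf$, the vector $\prv(\gp)$ becomes nearly tangent to the boundary). The paper closes exactly this hole with two explicit estimates: first, requiring $\epsilon<\frac{\pp-\lppb}{2}$, every $\gp\notin\cn{\pp-\epsilon}$ satisfies $\|\prv(\gp)\|\geq\frac{\pp-\lppb}{2}$, since otherwise $\gp\in\otnm{\frac{\pp-\lppb}{2}}\subseteq\on{\frac{\pp+\lppb}{2}}\subseteq\cn{\pp-\epsilon}$ (the covering corollary following Lemma~\ref{lemma:persistence-parameter-lower-bound}); second, the part of $\of{\smpp'}$ on the outer side of that tangent hyperplane is contained in $\ob{\gp}{\sqrt{(1+\pp)\big(\pp^2-\pep[\smpp'](\gp)^2\big)/\pp}}$, and a second explicit upper bound on $\epsilon$ forces this radius below $\frac{\pp-\lppb}{2}$, so $\pr(\gp)$, lying in $\of{\smpp'}$ but too far from $\gp$, must be on the inner side. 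You use the shell condition only to place $\gp$ near $\bd\of{\smpp'}$, never to bound $\dst(\gp,\mnf)$ from below, and without that lower bound your continuity argument cannot be made uniform.

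The second gap is in your ``delicate ingredient''. From $\pu[\smpp'](\gp)>0$ you correctly get $\gp\notin\tde[\smpp']$, i.e.\ $\dst(\se[\smpp'],\gp)<\tfrac{2}{3}\dst(\de[\smpp'],\gp)$; but this does \emph{not} imply that points of $\se[\smpp']$ lie arbitrarily close to $\gp$ (that would require $\dst(\se[\smpp'],\gp)=0$), so the inference you draw from it collapses. Likewise, the assertion that in the boundary regime every witness $\smpp$ of $\gp\in\npa$ must coincide with $\smpp'$ is stated, not proved, and is not evident: nothing you establish rules out $\gp\in\of{\smpp'}\cap\of{\smpp''}$ with $\pu[\smpp'](\gp)>0$ while $\pr(\gp)\in\of{\smpp''}\setminus\of{\smpp'}$, and Lemma~\ref{lemma:program-conclusion} only supplies \emph{some} ellipsoid containing both $\gp$ and $\pr(\gp)$, not necessarily the bump-carrying one. (The paper itself is terse at this same spot, applying its half-space estimate directly to the bump-carrying centre; but your proposed justification does not supply the missing step.) As written, the proposal therefore asserts rather than proves the lemma's quantitative content, and in particular produces no explicit admissible~$\epsilon$.
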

			
			\begin{proof}
				First we will require that $\epsilon < \frac{\pp-\lppb}{2}$. In that case the inequality $\dst(\gp, \pr(\gp)) < \frac{\pp-\lppb}{2}$ leads to contradiction $\gp \in \otnm{\frac{\pp-\lppb}{2}} \subseteq \on{\frac{\pp+\lppb}{2}} \subseteq \cn{\pp-\epsilon}$. Thus $\pr(\gp) \notin \ob{\gp}{\frac{\pp-\lppb}{2}}$.
				
				Consider the intersection of $\of$ with the closed half-space, bounded by the hyperplane, tangent to~$\bd\cf{\smpp}{\pep(\gp)}$, on the side not containing~$\of{\smpp}{\pep(\gp)}$. This intersection contains~$\gp$. If we take~$\gp$ arbitrarily close to~$\bd\of$ (i.e.~we consider $\pep(\gp)$ tending towards~$\pp$), the intersection is contained in arbitrarily small balls around~$\gp$. More explicitly, calculation shows that the intersection is contained in~$\ob{\gp}{\sqrt{\frac{(1+\pp)(\pp^2 - (\pep(\gp))^2)}{p}}}$.
				
				If we choose~$\epsilon$ so that this intersection is contained in~$\ob{\gp}{\frac{\pp-\lppb}{2}}$, then this intersection cannot contain~$\pr(\gp)$, whence $\lavf(\gp) = \prv(\gp)$. From $0 < \lppb < \pp < 1$ we get that
				\[\frac{\pp (3\pp^2 - \lppb^2 + 2 \pp (2+\lppb))}{1+\pp} > 0 \qquad \text{and} \qquad \pp - \frac{1}{2} \sqrt{\frac{\pp (3\pp^2 - \lppb^2 + 2 \pp (2+\lppb))}{1+\pp}} > 0.\]
				If we pick any $\epsilon \in \intoo{0}{\frac{\pp-\lppb}{2}}$ satisfying $\epsilon < \pp - \frac{1}{2} \sqrt{\frac{\pp (3\pp^2 - \lppb^2 + 2 \pp (2+\lppb))}{1+\pp}}$, then the aforementioned intersection is indeed contained in~$\ob{\gp}{\frac{\pp-\lppb}{2}}$.
				
				By assumption $\gp \in \npa$, i.e.~$\gp$ and~$\pr(\gp)$ are in the same open ellipsoid. Recall from Proposition~\ref{proposition:supports-pairwise-disjoint} that, aside from~$\pun$, at most one~$\pu$ is non-zero. Thus the vector~$\avf(\gp)$ is a convex combination of~$\lavf(\gp)$ and~$\prv(\gp)$, so it is equal to~$\prv(\gp)$. Hence $\vf(\gp)$ has the same direction as~$\prv(\gp)$ for our choice of~$\epsilon$.
			\end{proof}
			
%
			
			\begin{lemma}\label{lemma:flow-contained}
				An $\epsilon \in \intoo{0}{\pp}$ exists so that for every $\gp \in \cn{\pp-\epsilon}$ and every $u \in \RR_{\geq 0}$, for which $\flow(\gp, u)$ is defined, we have $\flow(\gp, u) \in \cn{\pp-\epsilon}$.
			\end{lemma}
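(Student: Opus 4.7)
The plan is to apply the interval-induction tool (Lemma~\ref{lemma:interval-induction}) to the interval $I = \intcc{0}{u}$ and the subset
\[L \dfeq \set{t \in I}{\flow(\gp, s) \in \cn{\pp-\epsilon} \text{ for every } s \in \intcc{0}{t}},\]
for an $\epsilon$ no larger than the value furnished by Lemma~\ref{lemma:field-normal-direction} (and possibly further refined below). The lower-set condition on $L$ is automatic by construction; $0 \in L$ by the hypothesis $\gp \in \cn{\pp-\epsilon}$; and closedness of $L$ follows from continuity of $\flow$ together with closedness of $\cn{\pp-\epsilon}$ in $\RR^\ad$, since if $\intco{0}{t} \subseteq L$, then $\flow(\gp, t)$ is a limit of points in $\cn{\pp-\epsilon}$ and so lies in it. The substantive task is the openness condition.

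Fix $t_0 \in L$ with $t_0 < u$ and put $\sgp_0 \dfeq \flow(\gp, t_0)$. When $\sgp_0$ lies in the interior of $\cn{\pp-\epsilon}$, openness at $t_0$ is automatic by continuity of the flow, so the only delicate case is $\sgp_0 \in \bd \cn{\pp-\epsilon}$. Here local finiteness of $\smp$ and continuity of each $\pep[\smpp]$ make the set $A \dfeq \set{\smpp \in \smp}{\sgp_0 \in \cf{\smpp}{\pp-\epsilon}}$ finite and non-empty, and force $\pep[\smpp](\sgp_0) = \pp - \epsilon$ for every $\smpp \in A$ (any strict inequality would put a whole neighbourhood of $\sgp_0$ into $\cn{\pp-\epsilon}$). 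I aim to exhibit $\smpp^* \in A$ such that the inner product of $\vf(\sgp_0)$ with the outward unit normal $\vec{n}$ to $\bd \cf{\smpp^*}{\pp-\epsilon}$ at $\sgp_0$ is non-positive; this makes $s \mapsto \pep[\smpp^*]\big(\flow(\gp, s)\big)$ non-increasing at $s = t_0$, which keeps $\flow(\gp, s)$ inside $\cf{\smpp^*}{\pp-\epsilon} \subseteq \cn{\pp-\epsilon}$ on a right-interval of $t_0$.

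Selecting $\smpp^*$ is guided by the decomposition $\avf(\sgp_0) = \pun(\sgp_0) \;\! \prv(\sgp_0) + \sum_{\smpp \in \smp} \pu(\sgp_0) \;\! \lavf(\sgp_0)$, in which Proposition~\ref{proposition:supports-pairwise-disjoint} permits at most one summand in the sum to be non-zero at $\sgp_0$. If such an active index exists and lies in $A$, take $\smpp^*$ to be it: the term $\pu[\smpp^*](\sgp_0) \;\! \lavf{\smpp^*}(\sgp_0)$ by its very definition sits in the closed half-space bounded by the tangent hyperplane at $\sgp_0$ to $\bd \cf{\smpp^*}{\pp-\epsilon}$ on the side of $\cf{\smpp^*}{\pp-\epsilon}$, so it contributes non-positively to the outward scalar product. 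Otherwise pick any $\smpp^* \in A$. The principal obstacle is then controlling the $\pun \prv$-contribution and any stray $\pu \lavf$ with $\smpp \notin A$: I plan to shrink $\epsilon$ below the Lemma~\ref{lemma:field-normal-direction} value so that the half-space-intersection-ball estimate used there places $\pr(\sgp_0)$ inside $\cf{\smpp^*}{\pp-\epsilon}$---making $\prv(\sgp_0)$ inward by convexity---and so that the angle bound of Lemma~\ref{lemma:normal-projection-away-from-normal-space-of-ellipsoid} dominates any stray $\lavf$ contribution at $\smpp^*$.
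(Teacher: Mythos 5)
Your interval-induction setup matches the paper's (and your redefinition of $L$ so that it is a lower set by construction is a harmless refinement), but the decisive step --- openness at a boundary point $\sgp_0 \in \bd\cn{\pp-\epsilon}$ --- does not work as you argue it. Your mechanism is pointwise: exhibit $\smpp^* \in A$ with $\spr{\vf(\sgp_0)}{\vec{n}} \leq 0$ and conclude that $s \mapsto \pep[\smpp^*]\big(\flow(\gp,s)\big)$ is non-increasing at $t_0$, hence that the flow stays in $\cf{\smpp^*}{\pp-\epsilon}$ for a short time. A non-strict sign condition at the single instant $t_0$ gives no forward invariance: in the situation you actually face the scalar product is typically exactly zero (for instance when the active term is $\lavf{\smpp^*}(\sgp_0)$, the tangential projection of $\prv(\sgp_0)$), and a function whose derivative vanishes at one point can increase immediately afterwards --- a trajectory tangent to $\bd\cf{\smpp^*}{\pp-\epsilon}$ at $\sgp_0$ may leave the closed ellipsoid at once. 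What is needed, and what the paper's proof supplies, is the tangent-or-inward property of~$\vf$ on a whole neighbourhood, so that the sign condition holds along the trajectory on a right-interval: the paper splits according to $\sea \cup \npa = \on$ (Lemma~\ref{lemma:open-cover-of-ellipsoid-area}); on $\sea$ the field is parallel to a single $\lavf$ on an entire neighbourhood of~$\sgp_0$, where at \emph{every} point it is tangent to or points inside the level ellipsoid through that point, so $\pep$ is non-increasing along the flow on an interval; on $\npa$, Lemma~\ref{lemma:field-normal-direction} holds at every point of $\npa \setminus \cn{\pp-\epsilon}$, i.e.\ along any would-be excursion outside, not merely at~$\sgp_0$.

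Independently, your plan for controlling the remaining terms is unsupported. For an arbitrary $\smpp^* \in A$ there is no reason that shrinking $\epsilon$ forces $\pr(\sgp_0) \in \cf{\smpp^*}{\pp-\epsilon}$: the ball estimate inside Lemma~\ref{lemma:field-normal-direction} shows only that $\pr(\gp)$ misses the thin sliver of $\of$ beyond the tangent hyperplane, and only under the hypothesis $\gp, \pr(\gp) \in \of$, i.e.\ when $\smpp^*$ is an $\npa$-witness for $\sgp_0$ --- exactly what your case of a stray active index lacks. Near a tip of an ellipsoid, $\pr(\sgp_0)$ need not lie in $\of{\smpp^*}$ at all (this is the failure mode of Figure~\ref{figure:normal-projection-not-working} that Section~\ref{section:program} exists to handle). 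Likewise, Lemma~\ref{lemma:normal-projection-away-from-normal-space-of-ellipsoid} bounds the angle between $\lavf(\gp)$ and $\prv(\gp)$ for the \emph{same} sample point and its own level ellipsoid (a lower bound of $\arccot(\tfrac{1}{\sqrt{2}})$ on the angle to the outward half-line); it does not make $\prv(\sgp_0)$ sufficiently inward with respect to $\bd\cf{\smpp^*}{\pp-\epsilon}$ to dominate a stray $\lavf$ contribution with $\smpp \notin A$. Both the invariance mechanism and this term-by-term control need to be replaced by the neighbourhood-level case analysis the paper uses.
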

			
			\begin{proof}
				Take any positive $\epsilon$, smaller than the one in Lemma~\ref{lemma:field-normal-direction}. Let $\gp \in \cn{\pp-\epsilon}$ and $u \in \RR_{\geq 0}$, so that $\flow(\gp, u)$ is defined. Let
				\[L \dfeq \set{t \in \intcc{0}{u}}{\flow(\gp, t) \in \cn{\pp-\epsilon}}.\]
				We use Lemma~\ref{lemma:interval-induction} to show $L = \intcc{0}{u}$; this finishes the proof.
				
				Clearly $0 \in L$ and $L$ is a lower set. It is the preimage of~$\cn{\pp-\epsilon}$ under the map ${\flow(\gp, \ph)\colon \intcc{0}{u} \to \RR^\ad}$, so it is closed. We only still need to see that for every $t \in L_{< u}$ there exists $t' \in \intoc{t}{u}$ such that $t' \in L$.
				
				If $\flow(\gp, t) \in \on{\pp-\epsilon}$, the requisite~$t'$ clearly exists. Assume now that $\flow(\gp, t) \in \bd\cn{\pp-\epsilon}$.
				
				Recall from Lemma~\ref{lemma:open-cover-of-ellipsoid-area} that $\sea \cup \npa = \on$. Suppose first that $\flow(\gp, t) \in \sea$. Then $\vf$ has the same direction as~$\lavf$ on some neighbourhood of~$\flow(\gp, t)$, meaning that it points into the interior of or is at worst tangent to~$\cf{\smpp}{\pep(\flow(\gp, t))}$ on this neighbourhood, so the flow stays for awhile in~$\cn{\pp-\epsilon}$ which gives us the requisite~$t'$. On the other hand, if $\flow(\gp, t) \in \npa$, then we have~$t'$ by Lemma~\ref{lemma:field-normal-direction}.
			\end{proof}
			
			We are now ready to prove that the domain of definition of~$\flow$ is
			\[\fd \dfeq \set[1]{(\gp, t) \in (\on \setminus \mnf) \times \RR_{\geq 0}}{t < \dst(\mnf, \gp)}.\]
			
			\begin{proposition}
				The flow~$\flow$ is defined on~$\fd$.
			\end{proposition}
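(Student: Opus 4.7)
The proof consists of two inclusions. The inclusion ``domain of $\flow$ is contained in $\fd$'' is immediate from Lemma~\ref{lemma:flow-distance-to-manifold}: whenever $\flow(\gp, t)$ is defined it lies in $\on \setminus \mnf$, so $\dst(\mnf, \flow(\gp, t)) > 0$, and combining this with the identity $\dst(\mnf, \flow(\gp, t)) = \dst(\mnf, \gp) - t$ forces $t < \dst(\mnf, \gp)$.

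For the reverse inclusion, fix $\gp \in \on \setminus \mnf$ and write $d \dfeq \dst(\mnf, \gp)$. Because $\vf$ is locally Lipschitz on $\on \setminus \mnf$ by Proposition~\ref{proposition:vector-fields-bounded-and-lipschitz}, the Picard--Lindel\"of theorem supplies a unique maximal forward trajectory $\flow(\gp, \cdot)$ on some half-open interval $\intco{0}{T^*}$ with $T^* > 0$. It suffices to prove $T^* \geq d$.

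The crucial step is to trap the trajectory inside a closed subset of $\on$. Since $\gp \in \of{\smpp'}$ for some sample point $\smpp'$, we have $\pep[\smpp'](\gp) < \pp$, and the proof of Lemma~\ref{lemma:flow-contained} actually delivers the invariance property for every sufficiently small positive $\epsilon$ (not just for one distinguished value). Hence we may pick $\epsilon > 0$ strictly smaller than both $\pp - \pep[\smpp'](\gp)$ and the bound used in that proof. Then $\gp \in \cf{\smpp'}{\pep[\smpp'](\gp)} \subseteq \cn{\pp - \epsilon}$, and Lemma~\ref{lemma:flow-contained} confines the entire trajectory: $\flow(\gp, t) \in \cn{\pp - \epsilon}$ for every $t \in \intco{0}{T^*}$.

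Assume for contradiction that $T^* < d$. By the norm bound $\|\vf\| \leq \sqrt{3/2}$ from Proposition~\ref{proposition:vector-fields-bounded-and-lipschitz}, the curve $t \mapsto \flow(\gp, t)$ is Lipschitz in $t$, so it extends continuously at $t = T^*$ to a limit $\gp^* \in \RR^\ad$. Since $\cn{\pp - \epsilon}$ is closed and contains every point of the trajectory, $\gp^* \in \cn{\pp - \epsilon} \subseteq \on$; and by continuity of $\dst(\mnf, \ph)$ together with Lemma~\ref{lemma:flow-distance-to-manifold}, $\dst(\mnf, \gp^*) = d - T^* > 0$, so $\gp^* \notin \mnf$. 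Therefore $\gp^* \in \on \setminus \mnf$, and local existence of the flow at $\gp^*$ extends $\flow(\gp, \cdot)$ past $T^*$, contradicting maximality. The only genuine obstacle throughout is the possibility that the trajectory might leak out of $\on$ through $\bd \on$ before approaching $\mnf$, and this is precisely what Lemma~\ref{lemma:flow-contained} was engineered to rule out.
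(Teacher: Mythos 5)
Your proof is correct and follows essentially the same route as the paper: both arguments use the boundedness of~$\vf$ to extract a limit point at a putative finite escape time, Lemma~\ref{lemma:flow-distance-to-manifold} to keep that limit off~$\mnf$, and Lemma~\ref{lemma:flow-contained} to keep it inside~$\on$, the only difference being that you package this as the standard maximal-solution (Picard--Lindel\"of) argument while the paper runs its interval-induction device (Lemma~\ref{lemma:interval-induction}). If anything, your handling of the containment step is more careful than the paper's: you choose~$\epsilon$ small enough (legitimately, since the proof of Lemma~\ref{lemma:flow-contained} works for every sufficiently small~$\epsilon$) that the starting point itself lies in~$\cn{\pp-\epsilon}$, whereas the paper only gestures at this with the remark that the flow would have to approach~$\bd\cn$.
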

			
			\begin{proof}
				The flow is defined as long as it remains within the domain of~$\vf$, i.e.~$\on \setminus \mnf$. Take any $\gp \in \on \setminus \mnf$ and define
				\[L \dfeq \set{t \in \intco{0}{\dst(\mnf, \gp)}}{\text{$\flow$ is defined at $(\gp, t)$}}.\]
				We verify the properties for~$L$ from Lemma~\ref{lemma:interval-induction} to get $L = \intco{0}{\dst(\mnf, \gp)}$. The basic properties of the flow tell us that $0 \in L$ and that $L$ is an open lower subset of~$\intco{0}{\dst(\mnf, \gp)}$. Take $t \in \intco{0}{\dst(\mnf, \gp)}$ such that $\intco{0}{t} \subseteq L$. Because the vector field~$\vf$ is bounded (Proposition~\ref{proposition:vector-fields-bounded-and-lipschitz}), the map $\flow(\gp, \ph)\colon \intco{0}{t} \to \on \setminus \mnf$ (of which the field is the derivative) is Lipschitz, in particular uniformly continuous. Hence it has a (uniformly) continuous extension $\intcc{0}{t} \to \cn$ (since $\cn$, as a closed subspace of~$\RR^\ad$, is complete). Thus the limit $\sgp \dfeq \lim_{t' \nearrow t} \flow(\gp, t')$ exists and is in~$\cn$.
				
				We need to show that $\sgp \in \on \setminus \mnf$. Using Lemma~\ref{lemma:flow-distance-to-manifold}, we get
				\[\dst(\mnf, \sgp) = \dst\big(\mnf, \lim_{t' \nearrow t} \flow(\gp, t')\big) = \lim_{t' \nearrow t} \dst\big(\mnf, \flow(\gp, t')\big) = \lim_{t' \nearrow t} \big(\dst(\mnf, \gp) - t'\big) = \dst(\mnf, \gp) - t > 0.\]
				Hence $\sgp \notin \mnf$.
				
				We also have $\sgp \in \on$. Before the flow could leave~$\on$, it would have to get arbitrarily close to~$\bd\cn$ which would contradict Lemma~\ref{lemma:flow-contained}.
			\end{proof}
		
		\subsection{The Deformation Retraction}\label{subsection:deformation-retraction}
		
			We can now define a deformation retraction from~$\on$ to~$\mnf$. The flow~$\flow$ takes us arbitrarily close to the manifold without actually reaching it, so we will define the deformation retraction in two parts: first from $\on$ to a small neighbourhood of~$\mnf$, and then from this neighbourhood to~$\mnf$ itself.
			
			Recall that by assumption $\pp > \lppb$, and we have
			\[\mnf \subseteq \bigcup_{\smpp \in \smp} \cf{\smpp}{\lppb} \subseteq \bigcup_{\smpp \in \smp} \of.\]
			The distance of a point in~$\cf{\smpp}{\lppb}$ to the complement of~$\of$ is the smallest in a co-vertex of~$\cf{\smpp}{\lppb}$, where it is equal to~$\pp - \lppb$. Hence $\ctnm{\frac{\pp - \lppb}{2}} \subseteq \otnm{\pp - \lppb} \subseteq \on$.
			
			Denote $w \dfeq \frac{\pp - \lppb}{2}$ and define the map $\dr\colon \on \times \intcc{0}{1} \to \on$ by
			\[\dr(\gp, t) \dfeq \begin{cases} \flow\big(\gp, \min\set{\dst(\mnf, \gp) - w, t}\big) & \text{if $\gp \in \on \setminus \otnm{w}$,} \\ \gp & \text{if $\gp \in \ctnm{w}$.} \end{cases}\]
			This map is well defined: if $\dst(\mnf, \gp) = w$, the two function rules match. Each of them is continuous and defined on a domain, closed in~$\on \times \intcc{0}{1}$, so $R$ is continuous on~$\on \times \intcc{0}{1}$. Clearly $\dr$ is a strong deformation retraction from~$\on$ to~$\ctnm{w}$: for $\gp \in \on \setminus \otnm{w}$ we have
			\[\dr(\gp, 1) = \flow\big(\gp, \min\set{\dst(\mnf, \gp) - w, 1}\big) = \flow\big(\gp, \dst(\mnf, \gp) - w\big),\]
			so $\dst\big(\mnf, \dr(\gp, 1)\big) = \dst(\mnf, \gp) - \big(\dst(\mnf, \gp) - w\big) = w$ by Lemma~\ref{lemma:flow-distance-to-manifold}.
			
			\begin{proposition}\label{proposition:existence-of-deformation-retraction}
				There exists a strong deformation retraction of~$\on$ to~$\mnf$.
			\end{proposition}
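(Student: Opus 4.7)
The plan is to concatenate two strong deformation retractions. The map~$\dr$ already constructed in this subsection is a strong deformation retraction of~$\on$ onto the closed tubular neighbourhood~$\ctnm{w}$, where $w = \frac{\pp - \lppb}{2}$. It therefore suffices to produce a strong deformation retraction from~$\ctnm{w}$ to~$\mnf$ and to paste the two together.

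For the second piece I would use the classical normal deformation retraction. By Corollary~\ref{corollary:persistence-parameter-bounds} we have $\pp < \rch$, hence $w < \rch$, so $\ctnm{w} \subseteq \otnm{\rch} \subseteq \ma^\complement$ and Lemma~\ref{lemma:continuity-of-closest-point-projection} guarantees that $\pr$ and $\prv$ are continuous on $\ctnm{w}$. Define
\[
\dr'\colon \ctnm{w} \times \intcc{0}{1} \to \ctnm{w}, \qquad \dr'(\gp, t) \dfeq \gp + t\,\prv(\gp).
\]
For each $\gp \in \ctnm{w}$ the point $\dr'(\gp,t)$ lies on the straight segment from $\gp$ to $\pr(\gp)$, which is contained in the normal space $\ns{\pr(\gp)}$ and stays within distance $\dst(\mnf,\gp) \leq w$ of $\mnf$, so the target space is correct. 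Clearly $\dr'(\ph,0) = \id$, $\dr'(\gp, 1) = \pr(\gp) \in \mnf$, and $\dr'(\sgp, t) = \sgp$ for all $\sgp \in \mnf$ and $t \in \intcc{0}{1}$, so $\dr'$ is a strong deformation retraction.

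Now concatenate the two homotopies at $t = \tfrac{1}{2}$:
\[
H\colon \on \times \intcc{0}{1} \to \on, \qquad
H(\gp, t) \dfeq
\begin{cases}
\dr(\gp, 2t) & \text{if } t \in \intcc{0}{\tfrac{1}{2}}, \\
\dr'\!\big(\dr(\gp, 1),\, 2t-1\big) & \text{if } t \in \intcc{\tfrac{1}{2}}{1}.
\end{cases}
\]
Because $\dr(\gp, 1) \in \ctnm{w}$ by construction, the second branch is well defined, and at $t = \tfrac{1}{2}$ both branches give $\dr(\gp, 1)$, so $H$ is continuous. At $t=0$ it is the identity, at $t=1$ it maps into $\mnf$, and it fixes every point of $\mnf$ throughout the homotopy since $\dr$ already fixes the entire neighbourhood $\ctnm{w} \supseteq \mnf$ and $\dr'$ fixes $\mnf$ pointwise.

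The only genuine content beyond the work already done is checking that $\dr'$ maps $\ctnm{w} \times \intcc{0}{1}$ into $\ctnm{w}$, which follows from $\|\dr'(\gp, t) - \pr(\gp)\| = (1-t)\,\|\prv(\gp)\| \leq \dst(\mnf, \gp) \leq w$; there are no hidden obstacles, since the upper bound $\pp < \rch$ is precisely what keeps $\ctnm{w}$ inside the reach tube and thus makes $\pr$ well behaved. Hence $H$ is the desired strong deformation retraction from~$\on$ to~$\mnf$.
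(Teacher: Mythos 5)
Your proposal is correct and follows essentially the same route as the paper: first use the already-constructed retraction~$\dr$ to strongly deformation retract $\on$ onto $\ctnm{w}$, then apply the normal deformation retraction $(\gp, t) \mapsto (1-t)\,\gp + t\,\pr(\gp)$ on $\ctnm{w}$, which is well defined since $w < \rch$. Your explicit verification that the normal homotopy stays inside $\ctnm{w}$ and your spelled-out concatenation are just more detailed versions of steps the paper leaves implicit.
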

			
			\begin{proof}
				First use~$\dr$ to strongly deformation retract $\on$ to~$\ctnm{w}$. From here, the usual normal deformation retraction works. Specifically, since $w$ is less that the reach of~$\mnf$, the map~$\pr$ is defined on~$\ctnm{w}$. Hence the map $\ctnm{w} \times \intcc{0}{1} \to \ctnm{w}$, given by $(\gp, t) \mapsto (1-t) \cdot \gp + t \cdot \pr(\gp)$, is well defined and a strong deformation retraction from~$\ctnm{w}$ to~$\mnf$.
			\end{proof}

	\section{Main Theorem}\label{section:main-result}
	
		\begin{theorem}\label{theorem:main}
			Let $\ad \in \NN$ and let $\mnf$ be a non-empty properly embedded $\C{1}$-submanifold of~$\RR^\ad$ without boundary. Let $\mnf$ have the same dimension~$\md$ around every point. Let $\smp \subseteq \mnf$ be a subset of~$\mnf$, locally finite in~$\RR^\ad$ (the sample from the manifold~$\mnf$). Let $\rch$ be the reach of~$\mnf$ in~$\RR^\ad$ and $\hd$ the Hausdorff distance between $\smp$ and~$\mnf$. Following the notation from Section~\ref{section:program}, let $m_\pp = 0.5$, $M_\pp = 0.96$, $\hd_\mathrm{off} = 0.55$. Then for all $\pp \in \intcc{m_\pp \rch}{M_\pp \rch}$ which satisfy
			\[\hd < \sqrt{2\pp \big(\sqrt{\rch (\pp + 2\rch)} - \rch\big) - \hd_\mathrm{off} \rch^2}\]
			there exists a strong deformation retraction from~$\on$ (the union of open ellipsoids around sample points) to~$\mnf$. In particular, $\mnf$, $\on$ and the nerve complex of the ellipsoid cover $\big(\of\big)_{\smpp \in \smp}$ are homotopy equivalent, and so have the same homology.
		\end{theorem}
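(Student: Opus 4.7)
The plan is to reduce everything to the normalized case $\rch = 1$ and then simply assemble the ingredients established in Sections~\ref{section:bounds-on-persistence-parameter}--\ref{section:deformation-retraction-construction}, where the hard technical work has already been done.

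First I would perform a rescaling argument. Apply the homothety $\RR^\ad \to \RR^\ad$, $\gp \mapsto \frac{1}{\rch} \gp$. This sends $\mnf$ to a properly embedded $\C{1}$-submanifold $\mnf'$ of reach~$1$, the sample $\smp$ to a sample $\smp'$ whose Hausdorff distance to~$\mnf'$ is $\frac{\hd}{\rch}$, and the tangent-normal $\pp$-ellipsoid around $\smpp \in \smp$ to the tangent-normal $\frac{\pp}{\rch}$-ellipsoid around $\frac{1}{\rch}\smpp \in \smp'$ (this uses $\sqrt{\rch\pp + \pp^2}/\rch = \sqrt{(\pp/\rch) \cdot 1 + (\pp/\rch)^2}$). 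Dividing the hypothesis through by~$\rch$ transforms it into $\frac{\hd}{\rch} < \sqrt{2 \frac{\pp}{\rch}\bigl(\sqrt{\frac{\pp}{\rch} + 2} - 1\bigr) - \hd_{\mathrm{off}}}$ with $\frac{\pp}{\rch} \in \intcc{m_\pp}{M_\pp}$. So it suffices to prove the theorem under the assumption $\rch = 1$ and $\pp \in \intcc{0.5}{0.96}$.

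Next I would dispatch the degenerate dimensional cases. If $\md = \ad$ then $\mnf = \RR^\ad$ and every~$\of$ is a Euclidean ball; in this case $\on$ already equals $\RR^\ad$ (under the hypothesis) and the statement is trivial via the standard contraction. If $\md = 0$ then $\mnf$ is a locally finite set of isolated points each separated by at least $2\rch$, so $\smp = \mnf$ forces $\hd = 0$ and each $\of$ is a ball around an isolated point; again the retraction is clear. So assume $0 < \md < \ad$, which places us exactly in the setting of Sections~\ref{section:program} and~\ref{section:deformation-retraction-construction}.

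Now I would collect the pieces. The hypothesis implies $\hd < \sqrt{2\pp(\sqrt{\pp + 2} - 1)}$, so Lemma~\ref{lemma:persistence-parameter-lower-bound} yields $\pp > \lppb$ and $\mnf \subseteq \on$. The inequality $\pp \leq 0.96 < 1 = \rch$ places us in the setting of Corollary~\ref{corollary:persistence-parameter-bounds}, so $\on \subseteq \ma^\complement$, and in particular the closest-point projection $\pr$ is well defined on $\on$. All assumptions needed to construct the vector field $\vf$, its flow $\flow$, and the retraction $\dr$ in Subsections~\ref{subsection:vector-field}--\ref{subsection:deformation-retraction} are therefore in force (the extra term $-\hd_{\mathrm{off}}\rch^2$ in the hypothesis is precisely what guarantees that the program's verification in Section~\ref{section:program}, hence Lemma~\ref{lemma:program-conclusion}, applies). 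Proposition~\ref{proposition:existence-of-deformation-retraction} then furnishes a strong deformation retraction $\on \times \intcc{0}{1} \to \on$ onto~$\mnf$, proving the first assertion.

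Finally, for the nerve-complex statement, each $\of$ is an open convex subset of $\RR^\ad$, and any non-empty intersection of finitely many open convex sets is open and convex, hence contractible. Since $\smp$ is locally finite in $\RR^\ad$, the cover $(\of)_{\smpp \in \smp}$ is locally finite. By the classical Nerve Lemma for good open covers (Borsuk's theorem), the nerve of $(\of)_{\smpp \in \smp}$ is homotopy equivalent to~$\on$, and therefore to~$\mnf$ by the retraction just produced; in particular they share the same homology. The expected main obstacle is conceptual rather than technical, namely being scrupulous that the rescaling really does preserve the ellipsoid family (since the tangent semi-axis length depends nonlinearly on $\pp$ and $\rch$), and handling the edge cases so that Lemma~\ref{lemma:program-conclusion} and the $0 < \md < \ad$ hypothesis of Sections~\ref{section:program}--\ref{section:deformation-retraction-construction} genuinely cover the claim.
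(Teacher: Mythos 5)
Your proposal is correct and follows essentially the same route as the paper: rescale to $\rch = 1$ by homogeneity, dispatch the degenerate dimensions, then combine Lemma~\ref{lemma:persistence-parameter-lower-bound} (so $\pp > \lppb$ and $\mnf \subseteq \on$), Corollary~\ref{corollary:persistence-parameter-bounds}, the program-backed Lemma~\ref{lemma:program-conclusion} (for which the $-\hd_\mathrm{off}\rch^2$ slack is indeed the point), and Proposition~\ref{proposition:existence-of-deformation-retraction}, with the nerve statement following from the Nerve Lemma applied to the locally finite convex open cover. The only cosmetic difference is that the paper explicitly treats $\rch = \infty$ (where $\mnf$ is an affine subspace and $\on = \otnm{\pp}$ retracts normally), a case your rescaling step skips but which is vacuous under the hypothesis $\pp \in \intcc{0.5\rch}{0.96\rch}$, while the paper in turn leaves the nerve-lemma step implicit.
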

		
		\begin{proof}
			First consider the case $\rch = \infty$. Then $\mnf$ is an $\md$-dimensional affine subspace of~$\RR^\ad$. In that case $\on$ is just~$\otnm{\pp}$ (the tubular neighbourhood around~$\mnf$ of radius~$\pp$) which clearly strongly deformation retracts to~$\mnf$ via the normal deformation retraction.
			
			A particular case of this is when $\md = \ad$ or when $\mnf$ is a single point. If $\md = 0$, i.e.~$\mnf$ is a non-empty locally finite discrete set of points, and if $\mnf$ has at least two points, then the reach is half the distance between two closest points. In this case we necessarily have $\smp = \mnf$ and the set~$\on$ is a union of $\pp$-balls around points in~$\mnf$ which clearly deformation retracts to~$\mnf$.
			
			We now consider the case $\rch < \infty$ and $0 < \md < \ad$.
			
			All our conditions and results are homogeneous in the sense that they are preserved under uniform scaling. In particular, we may rescale the whole space~$\RR^\ad$ by the factor~$\frac{1}{\rch}$ and may thus without loss of generality assume $\rch = 1$. The result now follows from Proposition~\ref{proposition:existence-of-deformation-retraction}.
		\end{proof}
		
		\begin{corollary}\label{corollary:sample-density}
			Let $\ad \in \NN$ and let $\mnf$ be a non-empty properly embedded $\C{1}$-submanifold of~$\RR^\ad$ without boundary. Let $\mnf$ have the same dimension~$\md$ around every point. Let $\smp \subseteq \mnf$ be a subset of~$\mnf$, locally finite in~$\RR^\ad$ (the sample from the manifold~$\mnf$). Let $\rch$ be the reach of~$\mnf$ in~$\RR^\ad$ and $\hd$ the Hausdorff distance between $\smp$ and~$\mnf$. Then whenever
			\[\frac{\hd}{\rch} < \sqrt{2 M_\pp (\sqrt{2 + M_\pp} - 1) - \hd_\mathrm{off}} \approx 0.913,\]
			there exists $\pp \in \RR_{> 0}$ such that $\mnf$ is homotopy equivalent to~$\cn$.
		\end{corollary}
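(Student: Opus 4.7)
The plan is to apply Theorem~\ref{theorem:main} at two carefully chosen persistence parameters and use the resulting deformation retractions to establish $\mnf \simeq \cn$. The degenerate cases $\rch = \infty$, $\md = 0$, and $\md = \ad$ are handled exactly as in the proof of Theorem~\ref{theorem:main}, since in each of them $\cn$ is either a tubular neighbourhood of an affine subspace or a disjoint union of closed balls around the sample points, both of which trivially deformation retract onto $\mnf$. I therefore assume $0 < \md < \ad$ and $\rch < \infty$, and by homogeneity rescale so that $\rch = 1$.

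The algebraic reduction is quick. By Lemma~\ref{lemma:persistence-parameter-lower-bound}(2), the function $\pp \mapsto 2\pp(\sqrt{\pp + 2} - 1)$ is strictly increasing on $(0, \infty)$, hence so is the admissibility function $\pp \mapsto \sqrt{2\pp(\sqrt{\pp + 2} - 1) - \hd_\mathrm{off}}$ wherever its radicand is positive. Evaluating at $\pp = M_\pp$ gives exactly the right-hand side of the corollary's hypothesis. The strict inequality $\hd < \sqrt{2 M_\pp(\sqrt{2 + M_\pp} - 1) - \hd_\mathrm{off}}$ together with continuity therefore permits me to pick two values $\pp_1 < \pp_2$ in $(m_\pp, M_\pp]$ both satisfying the hypothesis of Theorem~\ref{theorem:main}. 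Applying the main theorem at $\pp_2$ then supplies a strong deformation retraction $\on{\pp_2} \to \mnf$ via Proposition~\ref{proposition:existence-of-deformation-retraction}.

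To conclude $\mnf \simeq \cn{\pp_1}$, the idea is to show that this strong deformation retraction restricts to one from $\cn{\pp_1}$ onto $\mnf$. Since $\cf[\smpp]{\pp_1} \subseteq \of{\smpp}{\pp_2}$ whenever $\pp_1 < \pp_2$, we have $\cn{\pp_1} \subseteq \on{\pp_2}$. The retraction from Proposition~\ref{proposition:existence-of-deformation-retraction} consists of the flow $\flow$ of the vector field $\vf$ (built for parameter $\pp_2$), followed by the normal retraction on a small tubular neighbourhood $\ctnm{w}$ of $\mnf$, where $w = (\pp_2 - \lppb)/2$. Lemma~\ref{lemma:flow-contained} applied to $\pp_2$ furnishes $\epsilon > 0$ such that $\flow$ leaves $\cn{\pp_2 - \epsilon}$ invariant, so choosing $\pp_1 \leq \pp_2 - \epsilon$ at the outset forces every flow trajectory originating in $\cn{\pp_1}$ to remain inside $\cn{\pp_1}$. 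The final normal-retraction phase stays inside $\cn{\pp_1}$ as well, provided $\pp_1 > \lppb + w$ so that $\ctnm{w} \subseteq \on{\pp_1} \subseteq \cn{\pp_1}$ by the corollary after Lemma~\ref{lemma:persistence-parameter-lower-bound}. The main obstacle will be coordinating all these simultaneous constraints --- admissibility of $\pp_1$ and $\pp_2$ in Theorem~\ref{theorem:main}, the flow-containment gap $\pp_2 - \pp_1 \leq \epsilon$, and the tubular-neighbourhood condition $\pp_1 > (\lppb + \pp_2)/2$ --- but each is an open condition, and the strict density inequality provides enough slack to make a consistent choice, yielding the desired strong deformation retraction $\cn{\pp_1} \to \mnf$ and hence the equivalence $\mnf \simeq \cn{\pp_1}$.
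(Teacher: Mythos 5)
Your proposal is correct in substance, but it takes a much longer route than the paper, and the comparison is instructive. The paper's entire proof is two lines: the expression $\sqrt{2\pp\big(\sqrt{\rch(\pp+2\rch)}-\rch\big)-\hd_\mathrm{off}\rch^2}$ is increasing in~$\pp$, so the corollary's hypothesis is exactly the hypothesis of Theorem~\ref{theorem:main} at $\pp=M_\pp\rch$, and one simply applies the theorem with that single value of~$\pp$. Strictly read, that argument produces a homotopy equivalence between $\mnf$ and the \emph{open} union~$\on$, while the corollary as printed speaks of the closed union~$\cn$; the paper treats this distinction as immaterial. Your two-parameter argument --- run the construction of Section~\ref{section:deformation-retraction-construction} at~$\pp_2$ and then restrict the flow-based retraction to $\cn{\pp_1}\subseteq\on{\pp_2}$ for a slightly smaller~$\pp_1$, using Lemma~\ref{lemma:flow-contained} for the flow phase and $\ctnm{w}\subseteq\on{\pp_1}$ for the normal phase --- is precisely what is needed if one insists on the closed union literally, so your extra work buys a proof of the stated conclusion at the cost of re-entering the deformation-retraction machinery that the paper's proof only cites as a black box. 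One slip should be fixed: ``choosing $\pp_1\leq\pp_2-\epsilon$'' does not yield invariance of~$\cn{\pp_1}$, since Lemma~\ref{lemma:flow-contained} makes $\cn{\pp_2-\epsilon}$ flow-invariant (and, by its proof, $\cn{\pp_2-\epsilon'}$ for every $0<\epsilon'\leq\epsilon$), but says nothing about $\cn{r}$ for $r$ well below $\pp_2-\epsilon$. What you need is $\pp_1$ \emph{close} to~$\pp_2$, i.e.\ $\pp_2-\pp_1\leq\epsilon$, which is what your closing sentence states; the cleanest choice is $\pp_1=\pp_2-\epsilon$. Note also that the proof of Lemma~\ref{lemma:field-normal-direction} already imposes $\epsilon<\tfrac{\pp_2-\lppb}{2}$, so this choice automatically gives $\pp_1>\lppb+w=\tfrac{\pp_2+\lppb}{2}$, and admissibility of~$\pp_1$ in Theorem~\ref{theorem:main} is not actually needed; hence all your constraints are simultaneously satisfiable and the argument closes.
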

		
		\begin{proof}
			The expression $\sqrt{2\pp \big(\sqrt{\rch (\pp + 2\rch)} - \rch\big) - \hd_\mathrm{off} \rch^2}$ is increasing in~$\pp$. Hence we get the required result from Theorem~\ref{theorem:main} by taking $\pp = M_\pp \rch$.
		\end{proof}
	
	\section{Discussion}\label{section:discussion}
	
		As already mentioned in the introduction, the ratio~$\frac{\hd}{\rch}$ is a measure of the density of the sample. We want the required sample density to be small, i.e.~the ratio~$\frac{\hd}{\rch}$ should be as large as possible. Corollary~\ref{corollary:sample-density} gave us the upper bound $\frac{\hd}{\rch} < 0.913$. For comparison, recall that Niyogi, Smale and~Weinberger~\cite{NSW} obtained the bound $\frac{\hd}{\rch} < \frac{1}{2} \sqrt{\frac{3}{5}} \approx 0.387$. Hence our result allows approximately $2.36$-times lower density of the sample.
		
		There is clear room for improvement of our result. The bounds we obtained from the theoretical parts of the proof yield
		\[\frac{\hd}{\rch} < \sqrt{2(\sqrt{3} - 1)} \approx 1.21,\]
		which would be a further improvement of the above result~$\frac{\hd}{\rch} < 0.913$ by around a third (more than three times an improvement over the Niyogi, Smale and~Weinberger's result). The only reason we had to settle for the worse result was because to prove Lemma~\ref{lemma:program-conclusion} in Section~\ref{section:program}, we used a computer program. We can get closer to the theoretical bound by increasing~$M_\pp$ and decreasing $m_\pp$ and~$\hd_\mathrm{off}$, in which case the program yields a smaller lower bound on the values of the calculated function. Hence we would need to run the program with smaller~$\delta$, but since the loops in the program, the number of steps in which is inversely proportional to~$\delta$, are nested four levels deep, dividing $\delta$ by some~$t$ makes the program run approximately $t^4$-times longer. The parameters, given in Section~\ref{section:program}, are what we settled for in this paper in order for the program to complete the calculation in a reasonable amount of time --- the program which computes in parallel ran for around $2.7$~days on a \mbox{4-core} Intel i7-7500 processor. Our bound on the sample density can thus be improved by anyone with more patience and better hardware.
		
		One of the questions we do not answer in this paper concerns robustment of our results to noise. This is relevant because in practice, the reach and the tangent spaces are estimated from the sample, and are thus only approximately known. The sample points might also lie only in the vicinity of the manifold, not exactly on it. In this paper we wanted to establish the new methods, and we leave their refinement to take noise into account for future work.
		
		Our result is expressed in terms of the reach of a manifold, which is a global feature. As the classical sampling theory advanced, researchers refined the notion of reach to local feature size, weak feature size, $\mu$-reach and related concepts~\cite{amenta1999surface}, \cite{chazal2005weak}, \cite{chazal2008smooth}, \cite{Chazal2009}, \cite{Turner}, \cite{dey2017parameter}. A natural question arises whether we can apply these concepts to improve the bounds on a (local) density of a sample when using ellipsoids. In particular, it would be interesting to see whether we can improve our result by allowing differently sized ellipsoids around different sample points, with the upper bound on the size given in terms of the local feature size (local distance to the medial axis) or the distance to critical points.

\noindent
{\bf Acknowledgements.} We thank Paul Breiding, Peter Hintz and Jure Kali\v{s}nik for helpful discussions.
 \bigskip \medskip

\bibliography{ellipsoid} 
\bibliographystyle{plain}

\noindent
\footnotesize {\bf Authors' affiliations:}

\smallskip

\noindent
Sara Kali\v snik  \\  Department of Mathematics, ETH Zurich, Switzerland
\\ {\tt sara.kalisnik@math.ethz.ch}

\noindent Davorin Le\v{s}nik
 \\  Faculty of Mathematics and Physics, University of Ljubljana, Slovenia \\  {\tt davorin.lesnik@fmf.uni-lj.si}

\end{document}